\newcommand{\ds}{\displaystyle}
\newcommand{\Q}{\mathbb Q}
\newcommand{\Z}{\mathbb Z}
\newcommand{\R}{\mathbb R}
\newcommand{\C}{\mathbb C}
\newcommand{\A}{\mathbb A}
\renewcommand{\P}{\mathbb P}
\renewcommand{\cL}{\mathcal L}
\newcommand{\cM}{\mathcal M}
\newcommand{\cN}{\mathcal N}
\newcommand{\cO}{\mathcal O}
\newcommand{\cV}{\mathcal V}
\newcommand{\cX}{\mathcal X}
\newcommand{\cY}{\mathcal{Y}}
\newcommand{\cE}{\mathcal{E}}
\newcommand{\p}{\mathbf{p}}
\newcommand{\mathtext}[1]{\expandafter\def\csname#1\endcsname{\operatorname{#1}}}
\newcommand{\inv}{^{-1}}
\newcommand{\G}{\mathbb G}
\newcommand{\barEll}[1]{\overline{\operatorname{Ell}}_{#1}}
\newcommand{\naive}{\operatorname{naive}}
\newcommand{\Ht}{\operatorname{ht}}
\newcommand{\Height}{\operatorname{Ht}}
\newcommand{\lesim}{\substack{< \\ \sim}}
\newtheorem{theorem}{Theorem}[section]
\newtheorem{proposition}[theorem]{Proposition}
\newtheorem{lemma}[theorem]{Lemma}
\newtheorem{claim}[theorem]{Claim}
\theoremstyle{definition}
\newtheorem{definition}{Definition}
\newtheorem{notation}{Notation}
\newtheorem{remark}{Remark}[section]
\newtheorem{example}{Example}[section]
\theoremstyle{remark}
\title{Counting elliptic curves with a rational $N$-isogeny for small $N$}
\author{Brandon Boggess and Soumya Sankar }
\begin{document}

\maketitle

\begin{abstract}
    We count the number of rational elliptic curves of bounded naive height that have a rational $N$-isogeny, for $N \in \{2,3,4,5,6,8,9,12,16,18\}$. For some $N$, this is done by generalizing a method of Harron and Snowden. For the remaining cases, we use the framework of Ellenberg, Satriano and Zureick-Brown, in which the naive height of an elliptic curve is the height of the corresponding point on a moduli stack.
\end{abstract}

\section{Introduction}


Let $E$ be an elliptic curve over $\Q$. An isogeny $\phi : E \rightarrow E^{'}$ between two elliptic curves is said to be \emph{cyclic of degree} $N$ if $\Ker (\phi) (\bar{\Q}) \cong \Z/N\Z$. Further, it is said to be \emph{rational} if $\Ker (\phi)$ is stable under the action of the absolute Galois group, $G_{\Q}$. A natural question one can ask is, how many elliptic curves over $\Q$ have a rational cyclic $N$-isogeny? Henceforth, we will omit the adjective `cyclic', since these are the only types of isogenies we will consider. It is classically known that for $N \le 10$ and $N=12,13,16, 18, 25$, there are infinitely many such elliptic curves. Thus we order them by naive height. An elliptic curve $E$ over $\Q$ has a unique minimal Weierstrass equation $y^2 = x^3 + Ax + B$ where $A,B\in \Z$ and $\gcd(A^3,B^2)$ is not divisible by any 12th power. Define the \emph{naive height} of $E$ to be $\Ht(E) = \max\{|A|^3, |B|^2\}$.

\begin{notation}
For two functions $f,g: \R \rightarrow \R$, we say that $f(X) \asymp g(X)$ if there exist positive constants $K_1$ and $K_2$ such that $K_1g(X) \le f(X) \le K_2g(X)$. For a positive real number $X$ and positive integer $N$, define
    $$
    \cN(N,X) = \# \{E/\Q \mid \Ht(E) < X, E \text{ has a rational } N \text{-isogeny} \}.
    $$
We are interested in finding a function $h_N(X)$ such that $\cN(N,X) \asymp h_N(X)$ for any real $X>0$. Note that $h_N(X)$ describes the rate of growth of $\cN(N,X)$, rather than being an asymptotic. In this paper we will often call it the \emph{asymptotic growth rate} of the the function $\cN(N,X).$
\end{notation}

\begin{theorem}
\label{theorem:introduction:maintheorem}
    Maintaining the notation above, we have the following values of $h_N(X).$
     \begin{table}[h]
     \centering
        \begin{tabular}{|c|c||c| c|}
        \hline
             $N$ & $h_N(X)$ & $N$ & $h_N(X)$ \\ \hline
             $2$ & $X^{1/2}$ & $8$ & $X^{1/6}\log(X)$  \\ \hline
             $3$ & $X^{1/2}$  & $9$ & $X^{1/6} \log(X)$ \\ \hline
             $4$ & $X^{1/3}$  & $12$ & $X^{1/6}$\\ \hline
             $5$ & $X^{1/6}{(\log(X))^2}$ & $16$ & $X^{1/6}$  \\ \hline
             $6$ & $X^{1/6} \log(X)$  & $18$ & $X^{1/6}$ \\ \hline
        \end{tabular}
        \caption{Values of $h_N(X)$, ordered by naive height}
        \label{table:maintheorem1}
        \end{table}
\end{theorem}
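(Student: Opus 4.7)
The plan is to work case-by-case over the ten values of $N$, exploiting the fact that $X_0(N)$ has genus zero with a rational point for each such $N$, so $X_0(N)\cong\mathbb{P}^1_{\mathbb{Q}}$. Thus rational elliptic curves with an $N$-isogeny are parametrized by a Hauptmodul $t\in\mathbb{Q}$, and one has an explicit universal Weierstrass model $y^2=x^3+A_N(t)x+B_N(t)$ drawn from the classical literature on $X_0(N)$.

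For the ``small'' cases $N\in\{2,3,4\}$, I would generalize the Harron--Snowden lattice-point method. Writing $t=a/b$ in lowest terms, homogenize to obtain integer binary forms $A_N(a,b),B_N(a,b)$, and count primitive pairs $(a,b)$ with $\max(|A_N|^3,|B_N|^2)\le X$, modding out by the residual $\{\pm 1\}$-action on $(a,b)$ together with any automorphisms of the parametrization. To isolate the \emph{minimal} Weierstrass model, decompose the count according to the non-minimal twist parameter $d$ --- the largest integer with $d^4\mid A_N(a,b)$ and $d^6\mid B_N(a,b)$ --- using the twist families $\mintwists[d]$ and $\nonmintwists[d]$; summing the resulting lattice counts over $d$ produces the claimed exponent. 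Because the degrees of $A_N,B_N$ are small for these $N$, the sum over $d$ converges and no logarithmic factor appears.

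For the remaining $N\in\{5,6,8,9,12,16,18\}$, the degrees of $A_N,B_N$ are larger, the non-minimal locus on $X_0(N)$ contributes non-trivially, and a naive Harron--Snowden count fails to control the logarithmic terms correctly. Here I would adopt the Ellenberg--Satriano--Zureick-Brown framework: the naive height of $E/\mathbb{Q}$ is the stacky height of the associated $\mathbb{Q}$-point on $\ol{\mathcal{M}}_{1,1}$ with respect to the Hodge line bundle $\NHodge$. Pulling back along the forgetful map $X_0(N)\to\ol{\mathcal{M}}_{1,1}$ gives a stacky height on $X_0(N)$, viewed not as a scheme but as a Deligne--Mumford stack with residual gerbes at its elliptic points and cusps. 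Counting $\mathbb{Q}$-points of bounded stacky height on this stacky $\mathbb{P}^1$ yields $h_N(X)$ directly: the $X^{1/6}$ exponent reflects the degree of the pulled-back Hodge bundle, and the $\log X$ factors arise from the elliptic points of order $2,3$ and the cusps of $X_0(N)$, whose contribution to the local heights is a divisor-type sum of size $\log X$.

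The main obstacle will be the bookkeeping: for each of the ten $N$ one must (i) choose an explicit Hauptmodul and universal Weierstrass model; (ii) identify the locus on $X_0(N)$ where the model is non-minimal and quantify its contribution to the count; and (iii) verify in the overlap that the two approaches give matching answers. The delicate step is reconciling the stacky height formula with the elementary lattice count so that the powers of $\log X$ agree on the nose --- this reduces to identifying precisely which special points of $X_0(N)$ pick up extra automorphisms in the fine moduli interpretation, and thus contribute logarithmically to the stacky height sum over $\mathbb{P}^1(\mathbb{Q})$.
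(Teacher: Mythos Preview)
Your proposal has a genuine conceptual gap at the very first step. You assert that ``rational elliptic curves with an $N$-isogeny are parametrized by a Hauptmodul $t\in\mathbb{Q}$'' and then work with a single universal Weierstrass model $y^2=x^3+A_N(t)x+B_N(t)$. But this is false: the stack $\cX_0(N)$ has generic inertia $\mu_2$, so it is \emph{not} a fine moduli space and admits no universal family. A given Hauptmodul value $t_0\in\mathbb{Q}$ corresponds to a single $\overline{\mathbb{Q}}$-isomorphism class, hence to \emph{infinitely many} $\mathbb{Q}$-isomorphism classes --- namely all quadratic twists --- with wildly varying naive heights. Your lattice count over primitive pairs $(a,b)$ with $t=a/b$ captures at most one twist per $t$, so it misses almost everything; the ``twist parameter $d$'' you introduce is a minimality correction, not a quadratic-twist parameter, and does not repair this. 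The paper's key maneuver for $N\in\{3,4,6,8,9,12,16,18\}$ is precisely to build a degree-$2$ cover $\cX_{1/2}(N)\to\cX_0(N)$ on which the generic inertia vanishes (Proposition~\ref{proposition:geometryofX1/2N}), prove that every curve with an $N$-isogeny has a \emph{unique} quadratic twist landing on $\cX_{1/2}(N)$ (Proposition~\ref{prop:MainPropositionXHalfN}), and then count quadratic twists in the resulting genuine one-parameter family via a Harron--Snowden-type theorem with an extra twist variable (Proposition~\ref{prop:HarronSnowdenTwistTheorem} and Theorem~\ref{theorem:MainTheorem2Twists}).

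Your account of the ESZB side is also off-target. The paper does \emph{not} apply ESZB to $N=12,16,18$ --- it explicitly notes in \S6 that for these levels the ring of modular forms is too complicated --- and instead handles them via $\cX_{1/2}(N)$. Where ESZB \emph{is} used ($N=2,3,4,5,6,8,9$), the $\log X$ factors do not arise from ``elliptic points of order $2,3$ and cusps contributing a divisor-type sum'': they come from the explicit algebraic relations among generators of the ring of modular forms. For $N=3,6,8,9$ the relation is a conic $b^2=ac$, and the $\log X$ appears from summing $1/a$ over $|a|<X^{1/p_a}$ (Lemma~\ref{lemma:stackycount3689}); for $N=5$ the relation rearranges to a sum-of-two-squares problem, and the $(\log X)^2$ is extracted from a Dirichlet series with a triple pole (Proposition~\ref{prop:X_0(5)upperbound}). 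None of this is visible from a heuristic about stacky points on $\mathbb{P}^1$.
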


This result is motivated by work of Harron and Snowden in \cite{HS17}. In their paper they ask, for a given group $G$ from Mazur's list in \cite[Theorem 2]{Mazur78} , how many elliptic curves have $E(\Q)_{tors} \cong G$? They define
 $$
    \frac{1}{d(G)} = \lim_{X \rightarrow \infty} \frac{\log N_G(X)}{\log X},
$$
and compute $d(G)$ for each group in Mazur's list. Our counting results are a generalization of those in their paper, as explained later in this section.

\begin{remark}
    Some of these counts are not new, but we are able to give new proofs for them. Counting elliptic curves with a rational isogeny of degree 2 is equivalent to counting elliptic curves with a rational 2-torsion point. This is covered in \cite{HS17}. The case of $N=3$ was recently worked out by Pizzo, Pomerance and Voight \cite{PPV19}. The case $N=4$ was completed by Pomerance and Schaefer in \cite{PS20}, building off of work by Cullinan, Keeney and Voight in \cite{CKV20}. The case $N=4$ also follows from recent work of Bruin and Najman in \cite{BN20}, which we say more about in Remark \ref{remark:BruinNajman}. We include these cases in this paper since our methods are different, and the case $N=3$ serves as a good example for the purpose of exposition. An interesting phenomenon that occurs in this case is that of an `accumulating subvariety,' which occurs in a variety of questions related to the original Batyrev-Manin conjecture for schemes. It turns out that the $X^{1/2}$ contribution comes from elliptic curves with $j$-invariant 0, while the rest only contribute an $X^{1/3}\log(X).$ In a later section, we explain the absence of the cases $N=7,10,13$ and $25$. 
\end{remark}

\subsection{Counting rational points on stacks}

Let $\cX_0(N)$ be the compactification of the modular curve parametrizing pairs $(E,C)$, where $E$ is an elliptic curve and $C$ is a subgroup of $E$ isomorphic to $\Z/N\Z$. We can rephrase our problem of finding $h_N(X)$ in terms of counting rational points on $\cX_0(N)$. Each pair $(E,C)$ has an non-trivial automorphism. Thus $\cX_0(N)$ has generic inertia stack $B\mu_2$ and is not a scheme. For $N \le 10$ and $N=12,13,15,16,18$ and $25$, the coarse space of $\cX_0(N)$ can be identified with $\P^1$ via hauptmoduln. The difficulty in counting rational points on the stack $\cX_0(N)$ arises from the fact that there are many rational points with the same hauptmodul, parametrized by quadratic twists. \\

Let $\cX$ be a modular curve that is a scheme and let $E: y^2 = x^3 + Ax + B$ be a rational point on it. Then $\max \{|A|^3, |B|^2 \}$ is, up to a constant, the height with respect to the twelfth power of the Hodge bundle on $\cX$. So the natural question one might ask is, does the naive height also come from geometry in the case of $\cX_0(N)$? A positive answer to this question can be deduced from a forthcoming paper of Ellenberg, Satriano and Zureick-Brown (\cite{ESZB19}). In this paper, the authors establish a theory of heights on stacks. In particular, their height on $\cX_0(N)$ with respect to the 12th power of the Hodge bundle coincides with naive height. We use this geometric interpretation of naive height in \S \ref{section:countingheightsonstacks} to counts points on certain modular curves.

\subsection{Outline of Proof of Theorem \ref{theorem:introduction:maintheorem}}
\label{subsection:outlineofproof}
We use two main methods in the proof of the main theorem. For $N=3,4,6,8,9,12,16,18$, we generalize the methods of Harron and Snowden in \cite{HS17} to count elliptic curves in families. The idea of Harron and Snowden is to use an equation for the universal family of elliptic curves with a given torsion subgroup to reduce to a problem in analytic number theory -- counting polynomials over $\Q$ whose coefficients satisfy certain conditions. Counting elliptic curves with rational torsion isomorphic to $\Z/N\Z$ can be rephrased in terms of counting points on the modular curves $\cX_1(N)$ (see \S 2.1). For $5\le N\le 10$ and $N=12$, $\cX_1(N)$ is in fact a scheme isomorphic to $\P^1_{\Q}$, and so there exists a universal family $\mathcal E\to \cX_1(N)$ with a model $y^2=x^3+f(t)x+g(t)$ over $\Q(t)$. An elliptic curve $y^2=x^3+Ax+B$ over $\Q$ has a rational $N$-torsion point if and only if there exist $u,t\in\Q$ such that $A=u^4f(t)$ and $B=u^6g(t)$, and this is what is exploited by Harron and Snowden. The cases $N = 3, 4$ are handled by a slight modification of these methods.\\

The problem with applying this idea to counting elliptic curves with an $N$-isogeny is that $\cX_0(N)$ is not a scheme or even a stacky curve (in the sense of \cite{VZB15}). As such, there is no nice universal family at our disposal. Instead, the proof of Theorem \ref{theorem:introduction:maintheorem} involves reducing to a case where the framework of \cite{HS17} can be applied. We show that given an elliptic curve $E$ with an $N$-isogeny, there exists a twist $E^{\chi}$ of $E$ that can be interpreted as a rational point on either a scheme or at worst, a stacky curve. This curve, which we construct in \S 2,  is in fact a double cover of $\cX_0(N)$. Thus we reduce to counting quadratic twists within a framework very similar to the one used by Harron and Snowden in the $\cX_1(2)$ case. We note here that the technical counting theorems in \cite{HS17} are not enough to give us the results that we need. We thus prove a generalization of one of their theorems in \S 3, below.\\

For $N=2$ and $5$ this strategy does not work, since the cover of $\cX_0(N)$ that we construct is still a $\mu_2$ gerbe over its rigidification (in fact for $N=2$, it is equal to $\cX_0(N)$). Instead, we use the interpretation of naive height as the height with respect to the 12th power of the Hodge bundle. This allows us (in \S \ref{section:countingheightsonstacks}) to use different sections of the Hodge bundle to calculate the height of a rational point on $\cX_0(N)$, giving us a height that only differs from the naive height by a constant. We do this for $N=2,3,4,6,8$ and $9$ as well, thus giving a different proof for the asymptotic growth rates in these cases.

\begin{remark}
\label{remark:BruinNajman}
In recent work (\!\!\cite{BN20}), Bruin and Najman use the structure of $\cX_0(2)$ and $\cX_0(4)$ as weighted projective lines to obtain $h_2(X) = X^{1/2}$ and $h_4(X) = X^{1/3}$. This asymptotic holds over number fields as well. Although their proof is different from ours, the underlying idea of utilizing the stacky nature of these modular curves is the same.  
\end{remark}

\subsection{Acknowledgements}

We would like to thank Jordan Ellenberg for his valuable help and support. We would also like to thank David Zureick-Brown, John Voight and Jeremy Rouse for many helpful conversations and comments. We are also grateful to Andrew Snowden, Peter Bruin and Filip Najman for comments on the early draft of this paper. The second author would also like to thank Brandon Alberts and Libby Taylor.

\section{Preliminaries}

\subsection{Modular curves}

We start with some background and notation for modular curves. Most of this can be found in any standard textbook on modular curves. We recall this primarily in order to introduce the notation we will use for the rest of the paper. Let $S$ be a scheme. An \emph{elliptic curve over $S$} is a map $\pi:E\to S$, together with a section $z:S\to E$, such that every fiber of $\pi$ is a smooth projective genus 1 curve. 
Let $N$ be a positive integer. Let $\mathcal{Y}_0(N)$ denote the modular curve such that for a $\Z[\frac{1}{6N}]$-scheme $S$,
$$
\mathcal{Y}_0(N)(S) = \{(E/S, C/S) \mid C \cong_{S} \Z/N\Z \}
$$
where $E/S$ is an elliptic curve over $S$, $C$ is a sub-group scheme of $E$ defined over $S$, and the pair is taken up to isomorphism. Let $\cX_0(N)$ denote the compactification of $\mathcal{Y}_0(N)$ (in the sense of Deligne and Mumford). Every point of this moduli space possesses the extra automorphism $-1$, and so $\cX_0(N)$ is a stack with generic inertia stack $\mu_2$.\\

Let $\cY_1(N)$ denote the curve whose points are given by:
$$
\cY_1(N)(S) = \{(E/S,P/S) \mid N\cdot P = 0 \}
$$
where $E/S$ is an elliptic curve over $S$, $P \in E(S)$ is a point of order $N$, and the pair is taken up to isomorphism. Let $\cX_1(N)$ denote the Deligne-Mumford compactification of $\cY_1(N)$. For $N \ge 5$, $\cX_1(N)$ is a scheme. There is a natural map $\Phi_N : \cX_1(N) \rightarrow \cX_0(N)$ which sends $(E,P)$ to $(E, \langle P \rangle)$, where $ \langle P \rangle$ denotes the subgroup of $E$ generated by $P$. We remark here that the cusps of modular curves also have a moduli interpretation. They paramterize generalized elliptic curves with $\Gamma_0(N)$ or $\Gamma_1(N)$ structures. For a more detailed exposition on these, we refer the reader to \cite{DR73} or \cite{Con07}. A short summary can be found in Appendix \ref{appendix}.

\begin{definition}
    Let $\cM$ denote any modular curve. For any point $S \rightarrow \cM$, let $p: E \rightarrow S$ denote the corresponding elliptic curve. The \emph{Hodge bundle} $\lambda_{\cM}$ is the line bundle on $\cM$ such that $(\lambda_{\cM})_S = p_{*} \omega_{E/S}$. From the definition, one can see that if $\cM$ is a modular curve parametrizing elliptic curves with some level structure, then $\lambda_{\cM}$ is the pull back of $\lambda_{\cX(1)}$ along the forgetful map $\cM \rightarrow \cX(1)$. For ease of notation, we will omit the $\cM$ in $\lambda_{\cM}$ whenever the underlying modular curve is clear from context.
\end{definition}

Modular forms of weight $k$ and level $N$ are sections of the $k$-th power of the Hodge bundle on $\cX_0(N)$. The coefficients $A$ and $B$ in the Weierstass equation $y^2 = x^3 + Ax + B$ are, up to a scalar, the Eisenstein series $E_4$ and $E_6$ on $\cX(1)$ respectively. Thus $A^3$ and $B^2$ are sections of $\lambda^{\otimes 12}$ on $\cX_0(N)$. Thus counting elliptic curves of bounded naive height is the same as counting elliptic curves of bounded height with respect to $\lambda^{\otimes 12}$ on any modular curve that is a scheme (and as we shall see later, also moduli stacks).


\begin{definition}
\label{defn:hauptmoduln}
    Let $M$ denote the coarse space of a modular curve $\cM$. When $M \cong \P^1$, its function field is freely generated by a single element; this element is called a \emph{hauptmodul}. These hauptmoduln parametrize elliptic curves with a given level structure, and can be used to write equations for modular curves.
\end{definition}

\subsection{Rationally defined subgroups}\label{section:X_1/2}

In this subsection, we describe a degree two cover of $\cX_0(N)$ that we will use in our counting problem. To this end, let $N \ge 3$ 
and let $G = (\Z/N\Z)^{\times}$. Then $\Phi_N: \cX_1(N) \rightarrow \cX_0(N)$ is a branched $G$-cover of $\cX_0(N)$, with branch locus supported at irregular cusps and possibly points with $j=0, 1728$. Away from the branch locus, $G$ acts freely and transitively on the fibers of $\Phi_N$, by sending $a: (E,P) \mapsto (E, aP)$. Let $H$ be an index two subgroup of $G$. We denote by $\cX_{1/2}(N)$ the quotient $\cX_1(N)/H$. One can make sense of this quotient at the cusps by using the moduli interpretation of cusps as stated in \S 2.1; this construction is carried out in Appendix \ref{appendix}. We will denote by $\cY_{1/2}(N)$ the quotient $\cY_1(N)/H$.



\begin{remark} Before we proceed, we make some comments about the curves $\cX_{1/2}(N).$
\begin{enumerate}
    \item  The curve $\cX_{1/2}(N)$ is not a novel construction. It can be understood classically as the quotient of the upper half plane by an index 2 subgroup of $\Gamma_0(N)$. Further, we do not claim that $\cX_1(N)/H$ is a scheme. In fact it is a stack in many cases (see \S 4).
    \item The notation $\cX_{1/2}(N)$ might be misleading, since there is not always a unique index two subgroup of $(\Z/N\Z)^{\times}$. However, in our case we will only consider the $H$ for which $G/H$ is represented by $\{ +H, -H\}$. As an example, $(\Z/8\Z)^{\times}  \cong \Z/2\Z \times \Z/2\Z$. We will write this set as $\{1,3, 5, 7\}$. This has three index two subgroups: $H_1 = \{1,3 \}$, $H_2 = \{1, 5 \}$ and $H_3= \{1, 7\}$. The two cosets of $H_1$ are therefore $H_1 = \{1,3\}$ and $-H_1 =  \{5, 7\} $. Similarly for $H_2$. However, the two cosets of $H_3$ are $H_3 = \{1,7\} $ and $3H_3 = \{3, 5\}$. We will make it a point to \emph{not} pick $H_3$. The choice between $H_1$ or $H_2$ will not affect our final result. 
    \item  In the context of the remark above, we note that there are some values of $N$ (namely $N=5,10,13,25$) for which there is no choice of index 2 subgroup such that $G/H = \{\pm H\}$. For these $N$, while the construction of $\cX_{1/2}(N)$ still makes sense, it does not have the nice properties that we want (see Lemma \ref{lemma:twistsfactorization} and Proposition \ref{proposition:geometryofX1/2N}). Another way to rephrase the condition that $G/H = \{\pm H\}$ is in terms of the subgroup $\Gamma_0(N) \subset \SL_2(\Z)$. Consider the short exact sequence:
    $$
    1 \rightarrow \{\pm 1\} \rightarrow \Gamma_0(N) \rightarrow \P\Gamma_0(N) \rightarrow 1.
    $$
    For $N=5,10,13,25$, this sequence is non-split, while for the remaining $N$, it does split. This splitting enables us to construct a degree two cover of $\cX_0(N)$ without generic inertia. 
\end{enumerate}

\end{remark}

We now explain the significance of the curves $\cX_{1/2}(N)$. Most of what follows is well known (e.g., see \cite{RZB15}, \cite{GRSS}) but we recall them here for completeness. Let $E$ be an elliptic curve over $\Q$ with a rational $N$-isogeny. For notational convenience, we fix a Weierstrass form $y^2 = x^3 + Ax+ B$, with $A, B \in \Z$, for $E$. Fix an isomorphism of the kernel of the rational $N$-isogeny with $\Z/N\Z$. The Galois action of $G_{\Q}$ on the kernel defines a homomorphism:
$$
\chi : G_{\Q} \rightarrow (\Z/N\Z)^{\times}.
$$

For each $N \in \{3,4,6,7,8,9,12,16,18\}$, we can write $f: (\Z/N\Z)^{\times} \cong (\Z/2\Z) \times (\Z/m\Z)$ for some $m \in \Z$. This allows us to factor $\chi$ into two characters $\chi_1 : G_{\Q} \rightarrow \Z/2\Z $ and $\chi_2: G_{\Q} \rightarrow \Z/m\Z$. That is, we may write $\chi = \chi_1 \chi_2$ using the isomorphism $f$. Now, since $\chi_1$ is a quadratic character, it factors through a quadratic extension $K = \Q(\sqrt{d})$, with $d$ a squarefree integer. Let $E^{\chi_1}: dy^2 = x^3 + Ax + B$ denote the quadratic twist of $E$ over $K$.
    
 \begin{lemma}
   \label{lemma:twistsfactorization}
    Maintaining the above notation,
       $E^{\chi_1}$ has a rational $N$-torsion subgroup on which $G_{\Q}$ acts via $\chi_2$. That is, the Galois action on this $N$-torsion subgroup factors as:
       \begin{center}
           \begin{tikzcd}
                G \arrow[rr] \arrow[dr, "\chi_2"]&  &(\Z/N\Z)^{\times}\\
                 &\Z/m\Z \arrow[ur, hook] & \\
           \end{tikzcd}
       \end{center}
   \end{lemma}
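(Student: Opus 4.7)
The plan is to use an explicit $K$-isomorphism between $E$ and $E^{\chi_1}$, where $K = \Q(\sqrt{d})$, and track how it transports the Galois action on the kernel of the $N$-isogeny. Concretely, if $E$ has Weierstrass model $y^2 = x^3 + Ax + B$, then $E^{\chi_1}$ has model $y^2 = x^3 + d^2 A x + d^3 B$, and the map $\phi : E \rightarrow E^{\chi_1}$ given by $(x, y) \mapsto (dx, d^{3/2} y)$ is an isomorphism over $K$. A direct calculation using $\sigma(\sqrt{d}) = \chi_1(\sigma) \sqrt{d}$ shows that $\phi^{\sigma} = [\chi_1(\sigma)] \circ \phi$ for every $\sigma \in G_{\Q}$, where $[\chi_1(\sigma)]$ denotes multiplication by $\pm 1$ on $E^{\chi_1}$.

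Next, let $C \subset E[N]$ be the kernel of the given rational $N$-isogeny, so that $C$ is $G_{\Q}$-stable and $G_{\Q}$ acts on $C$ by $\chi$. Set $C' := \phi(C) \subset E^{\chi_1}[N]$, which is a cyclic subgroup of order $N$. For $Q = \phi(P) \in C'$ with $P \in C$ and any $\sigma \in G_{\Q}$, one computes
$$
\sigma(Q) = \sigma(\phi(P)) = \phi^{\sigma}(\sigma P) = [\chi_1(\sigma)]\, \phi([\chi(\sigma)] P) = [\chi_1(\sigma)\chi(\sigma)]\, Q,
$$
using that multiplication by integers commutes with $\phi$. This shows $C'$ is $G_{\Q}$-stable and that $G_{\Q}$ acts on $C'$ via the character $\chi_1 \chi$. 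Under the decomposition $(\Z/N\Z)^{\times} \cong \Z/2\Z \times \Z/m\Z$, in which the $\Z/2\Z$ factor is identified with $\{\pm 1\} \subset (\Z/N\Z)^{\times}$, we have $\chi = \chi_1 \chi_2$ by construction, and since $\chi_1^2 = 1$ we conclude $\chi_1 \chi = \chi_2$, which factors through $\Z/m\Z$ exactly as claimed by the diagram in the statement.

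The main step requiring care is the sign identity $\phi^{\sigma} = [\chi_1(\sigma)] \circ \phi$, a routine but notationally delicate computation that fixes the conventions for the whole argument; once it is in hand, the result follows formally from the standard interaction of quadratic twisting with the Galois representation on $N$-torsion. A small additional point to verify is that the list of $N$ in the statement is precisely the list for which $-1 \in (\Z/N\Z)^{\times}$ admits a complement, so that a decomposition $(\Z/N\Z)^{\times} \cong \{\pm 1\} \times \Z/m\Z$ genuinely exists and the factorization $\chi = \chi_1 \chi_2$ makes sense; this is the same condition that underlies the exclusion of $N = 5, 10, 13, 25$ already flagged in the preceding remark.
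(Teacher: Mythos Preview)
Your proof is correct and follows essentially the same approach as the paper: transport the kernel $C$ along the $K$-isomorphism $\phi : E \to E^{\chi_1}$, use the twist relation $\phi^{\sigma} = [\chi_1(\sigma)] \circ \phi$ to compute the Galois action on $\phi(C)$, and conclude that $G_{\Q}$ acts via $\chi_1\chi = \chi_2$. The only difference is that you make the isomorphism and the sign identity explicit via Weierstrass coordinates, whereas the paper invokes the defining cocycle property of a quadratic twist directly.
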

   \begin{proof}
    Let $C$ denote the kernel of the rational $N$ isogeny of $E$. Let $\phi : E \rightarrow E^{\chi_1}$ denote the isomorphism of elliptic curves defined over $\Q(\sqrt{d})$. For $P \in C$ and $\sigma \in G_{\Q}$, $P^{\sigma}=\chi(\sigma)P$ by assumption. Further, by the definition of a twist,
    $$
    \phi(P)^{\sigma} = \chi_1(\overline{\sigma}) \phi(P^{\sigma})
    $$
    where $\overline{\sigma}$ is the image of $\sigma$ in $\Z/2\Z$. Since $\chi_1$ is quadratic, $\chi_1 \chi = \chi_2$. It follows that $G_{\Q}$ acts on $\phi(C)$ via $\chi_2$.
   \end{proof}

  We have thus proved the following for $N \in \{3,4,6,7,8,9,12,16,18\}$.
   
   \begin{proposition}
   \label{prop:MainPropositionXHalfN}
        Fix an appropriate index 2 subgroup $H \subset (\Z/N\Z)^{\times}$ and consider the corresponding curve $\cX_{1/2}(N)$. Let $(E,C) \in \cX_0(N)(\Q)$. Then there exists a unique $d \in \Z$ squarefree, such that the corresponding twist $(E^{\chi_1},\phi(C))$ satisfies:
        \begin{enumerate}
            \item $(\phi(C))^{\times}$ has an index two subgroup $H_C$ defined over $\Q$, and therefore
            \item $(E,H_C), (E, -H_C) \in \cX_{1/2}(N)(\Q)$. 
        \end{enumerate}
   \end{proposition}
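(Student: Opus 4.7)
The plan is to deduce the proposition almost immediately from Lemma \ref{lemma:twistsfactorization}, with the main content being the bookkeeping to produce the index two subset $H_C$ from the factored character $\chi_2$.

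First, I would start from $(E,C)\in\cX_0(N)(\Q)$ and the chosen isomorphism $f:(\Z/N\Z)^{\times}\cong (\Z/2\Z)\times(\Z/m\Z)$, taking $H=f^{-1}(\{0\}\times(\Z/m\Z))$ so that the constraint $G/H=\{+H,-H\}$ of Remark 2.1(2) is satisfied. The Galois representation $\chi:G_\Q\to (\Z/N\Z)^\times$ coming from $C$ factors as $\chi=\chi_1\chi_2$ under $f$. The character $\chi_1:G_\Q\to\Z/2\Z$ is independent of the trivialization $C\cong\Z/N\Z$ because $(\Z/N\Z)^\times$ is abelian: changing the trivialization conjugates $\chi$ by an element of $(\Z/N\Z)^\times$ which acts trivially on the quotient $\Z/2\Z$. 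Hence $\chi_1$ cuts out a uniquely determined quadratic field $\Q(\sqrt{d})$ with $d$ squarefree, supplying the asserted unique $d$.

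Next, invoking Lemma \ref{lemma:twistsfactorization}, the twist $E^{\chi_1}$ has an isomorphic copy $\phi(C)$ of $C$ on which $G_\Q$ acts through the character $\chi_2$, whose image is contained in $H\subset(\Z/N\Z)^\times$ under the identification $f$. Fixing any generator $Q$ of $\phi(C)$, I define
\[
H_C := \{hQ : h \in H\} \subset \phi(C),
\]
a subset of cardinality $|H|=\tfrac{1}{2}|(\Z/N\Z)^\times|$. For any $\sigma\in G_\Q$ and $h\in H$, the action is $\sigma(hQ)=h\chi_2(\sigma)Q\in H_C$, so $H_C$ is Galois-stable, proving (1). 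The same calculation shows $-H_C=\{-hQ : h\in H\}$ is Galois-stable, and since $H$ and $-H$ are precisely the two cosets of $H$ in $(\Z/N\Z)^\times$, the pairs $(E^{\chi_1},H_C)$ and $(E^{\chi_1},-H_C)$ give the two rational points of $\cX_{1/2}(N)$ lying above $(E^{\chi_1},\phi(C))\in\cX_0(N)(\Q)$, yielding (2).

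The only subtle point is the uniqueness of $d$, which reduces to the well-definedness of $\chi_1$ addressed in the first paragraph. I expect no serious obstacle beyond checking this compatibility: once one ensures that the chosen index two subgroup $H$ satisfies the condition of Remark 2.1(2) (which is precisely why $N\in\{5,10,13,25\}$ are excluded), the entire proposition is a direct repackaging of Lemma \ref{lemma:twistsfactorization} in the language of cosets of $H$ acting on generators of $\phi(C)$.
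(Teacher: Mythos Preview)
Your proposal is correct and follows precisely the approach the paper takes: the paper's proof is the one-line ``This follows from combining the interpretation of $\cX_{1/2}(N)$ as a fiberwise quotient of $\cX_1(N)$ with Lemma \ref{lemma:twistsfactorization},'' and you have simply unpacked what that sentence means, including the well-definedness of $\chi_1$ needed for uniqueness of $d$ and the explicit description of $H_C$ as an $H$-coset of generators.
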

   
   \begin{proof}
       This follows from combining the interpretation of $\cX_{1/2}(N)$ as a fiberwise quotient of $\cX_1(N)$ with Lemma \ref{lemma:twistsfactorization}.
   \end{proof}
   
       A nice example of Proposition \ref{prop:MainPropositionXHalfN} is in the cases $N=3,4,6$, where $(\Z/N\Z)^{\times} \cong \Z/2\Z$. In these cases $\cX_{1/2}(N) = \cX_1(N)$. For these values of $N$, Proposition \ref{prop:MainPropositionXHalfN} says that if $E$ has a rational $N$-isogeny then there exists a quadratic twist of $E$ that has a rational $N$ torsion point.


\subsection{Automorphisms and universal families}
\label{subsection:auts}


 In this section, we briefly recall the relation between automorphisms and the existence of universal families. For more details, we refer the reader to \cite{KM85}, Chapter 4 and Appendix A.4. Let $F$ be a functor on the category ${\Ell}$ of elliptic curves over a ring $R$. Let $\tilde{F}$ denote the corresponding functor on the category of $R$-schemes sending an $R$-scheme $S$ to isomorphism classes of pairs $(E/S, \alpha)$, where $E$ is an elliptic curve over $S$ and $\alpha \in F(E/S)$ is an `$F$-level structure'. The functor $F$ (resp. $\tilde{F}$) is \emph{representable} if there exists a universal elliptic curve $\cE$ over a scheme $\cM$ (resp. a scheme $\cM$) such that $F(E/S) = \Hom(E/S, \cE/\cM)$ (resp. $\tilde{F}(S) = \Hom(S,\cM)$). Note that the representability of $F$ guarantees the existence of $\cM$, and therefore implies the representability of $\tilde{F}$. The functor $F$ is said to \emph{rigid} if for any $E/S \in {\Ell}$, and any $\alpha \in F(E/S)$, the pair $(E/S, \alpha)$ has no non-trivial automorphisms. In general, if $F$ is representable, then $F$ is rigid. The following proposition tells us when the converse is true:
    
    \begin{proposition}[\cite{KM85}, 4.7.0]
    \label{prop:rigidandrep}
        Suppose that for every elliptic curve $E/S$, the functor on the category of schemes over $S$ defined by
        $$
        T \mapsto F(E_T/T)
        $$
        is representable by a scheme. Suppose further that $F$ is affine over ${\Ell}$, that is, the morphism $F_{E/S} \rightarrow S$ is affine. Then $F$ is representable if and only $F$ is rigid.
    \end{proposition}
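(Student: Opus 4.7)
The plan is to prove the two implications separately, with the substance in the reverse direction.

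For the forward direction, suppose $F$ is representable by a universal object $(\cE/\cM, \alpha_{\mathrm{univ}})$, so that an element of $F(E/S)$ is a morphism $\alpha : E \to \cE$ in $\Ell$ which factors uniquely as $E \xrightarrow{\sim} \cE \times_{\cM} S \to \cE$. Any automorphism $\phi$ of $(E/S, \alpha)$ is a self-map $\phi : E \to E$ over $S$ with $\alpha \circ \phi = \alpha$; composing with the canonical isomorphism $E \cong \cE \times_{\cM} S$ forces $\phi = \mathrm{id}$. Hence $F$ is rigid.

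For the reverse direction, the strategy is to bootstrap from an auxiliary level structure $F'$ that is known to be rigid and representable by a scheme $\cM'$ carrying a universal elliptic curve $\cE' \to \cM'$, with a finite automorphism group $\Gamma$ acting on $\cM'$ by changing the level structure. A standard choice is a Drinfeld $\Gamma(n)$-structure for $n \ge 3$, in which case $\Gamma = \mathrm{GL}_2(\Z/n\Z)$. By the hypothesis, the scheme $\cN := F_{\cE'/\cM'}$ exists and is affine over $\cM'$; it represents the combined functor $\widetilde{F \times F'}$ on the category of schemes, and thus carries a canonical $\Gamma$-action lifting the one on $\cM'$.

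The key step is to form the quotient $\cM := \cN/\Gamma$. The $\Gamma$-action is free: a fixed point would produce an isomorphism $(E/S, \alpha, \beta) \cong (E/S, \alpha, g \cdot \beta)$, i.e., an automorphism $\phi$ of $(E/S, \alpha)$ intertwining $\beta$ with $g \cdot \beta$; rigidity of $F$ gives $\phi = \mathrm{id}$, and then rigidity of $F'$ gives $g = 1$. Because $\cN$ is affine over $\cM'$ and $\Gamma$ is finite with free action, the quotient $\cM$ exists as a scheme, obtained fppf-locally on $\cM'$ as the relative $\Spec$ of $\Gamma$-invariants. The universal elliptic curve and $F$-structure on $\cN$ then descend along the $\Gamma$-torsor $\cN \to \cM$. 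Finally one verifies that $\cM$ represents $\tilde F$: any $(E/S, \alpha)$ fppf-locally admits an $F'$-structure, yielding a map to $\cN$, and two such choices differ by $\Gamma$, so the composition $S' \to \cN \to \cM$ is canonical and descends uniquely to $S \to \cM$.

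The main obstacle is the freeness of the $\Gamma$-action together with the descent of both the scheme $\cN$ and its universal structure down to $\cM$. The affineness hypothesis is essential at precisely this step: free finite group quotients of affine schemes over any base are constructible as schemes via $\Gamma$-invariants, and this affineness is what makes the descent of the universal elliptic curve and $F$-structure along the torsor $\cN \to \cM$ go through without issue.
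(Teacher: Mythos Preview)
The paper does not give its own proof of this proposition: it is stated as a citation of \cite[4.7.0]{KM85} and used as a black box. So there is no in-paper argument to compare against.

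Your outline is essentially the standard Katz--Mazur argument and is correct in shape: bootstrap from an auxiliary representable rigid level structure $F'$ (e.g.\ full level $n$ for $n\ge 3$) with universal object over $\cM'$, use the relative-representability hypothesis to build $\cN=F_{\cE'/\cM'}$ over $\cM'$, deduce freeness of the $\Gamma$-action from rigidity of $F$ and $F'$, and then take the quotient using the affineness hypothesis. One small caution on the forward direction: the implication ``representable $\Rightarrow$ rigid'' is stated in the paper (and in Katz--Mazur) as a general fact holding without the two auxiliary hypotheses, so your argument there is fine but you should be aware it does not rely on them. Otherwise your sketch matches the cited proof.
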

    
    In this paper, we will be interested in the functors of points corresponding to $\cX_0(N)$, $\cX_1(N)$ and the intermediate quotient $\cX_{1/2}(N)$. To see that in these cases, the two hypotheses of Proposition \ref{prop:rigidandrep} are satisfied, we refer the reader to \cite{KM85}. Thus we may move freely between the existence of universal families and rigidity.

\subsection{Counting lattice points in a region}

In this section, we state a theorem of Davenport on a Lipschitz principle (\cite{Dav51}). Let $\mathcal{R}$ be a closed and bounded region in $\R^n$. Suppose $\mathcal{R}$ satifies the following two conditions:
\begin{enumerate}
    \item Any line parallel to one of the coordinate axes intersects $\mathcal{R}$ in a set that is a union of at most $h$ intervals.
    \item The same is true (with $n$ replaced by $m$) for any of the $m$-dimensional regions obtained by projecting $\mathcal{R}$ down to an $m$-dimensional coordinate axis ($1 \le m \le n-1$).
\end{enumerate}

Let $V(\mathcal{R})$ be the volume of the region $\mathcal{R}$ and $N(\mathcal{R})$ the number of lattice points in it. Then, the following theorem holds.

\begin{theorem}[\cite{Dav51}]
\label{theorem:davenport}
    For $\mathcal{R}$ satisfying 1 and 2, 
    $$
    |N(\mathcal{R}) - V(\mathcal{R})| \le \sum_{m=0}^{n-1} h^{n-m}V_m,
    $$
    where $V_m$ is the sum of the ($m$-dimensional) volumes of the $m$-dimensional projections of $\mathcal{R}$ and $V_0=1$.
\end{theorem}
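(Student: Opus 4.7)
The plan is to prove Davenport's theorem by induction on the dimension $n$. For the base case $n = 1$, the region $\mathcal{R}$ is a union of at most $h$ closed intervals, and each interval $[a,b]$ contains a number of integers differing from its length $b-a$ by at most $1$; summing over intervals gives $|N(\mathcal{R}) - V(\mathcal{R})| \le h$, matching $\sum_{m=0}^{0} h^{1-m} V_m = h$.

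For the inductive step, I would slice $\mathcal{R}$ in the $x_n$-direction. For $\mathbf{a} \in \Z^{n-1}$, let $I(\mathbf{a}) = \{x_n \in \R : (\mathbf{a}, x_n) \in \mathcal{R}\}$, which is a union of at most $h$ intervals by hypothesis (1); for $t \in \R$, let $\mathcal{R}_t \subset \R^{n-1}$ denote the slice at height $x_n = t$, which satisfies the hypotheses in dimension $n-1$ by hypothesis (2); and let $P \subset \R^{n-1}$ be the projection of $\mathcal{R}$ onto the first $n-1$ coordinates. Using Fubini together with the intermediate sum $\sum_{\mathbf{a}} V(I(\mathbf{a})) = \int_\R N_{n-1}(\mathcal{R}_t)\, dt$, I decompose
$$
N(\mathcal{R}) - V(\mathcal{R}) = \sum_{\mathbf{a} \in \Z^{n-1}} \bigl[ N(I(\mathbf{a})) - V(I(\mathbf{a})) \bigr] + \int_\R \bigl[ N_{n-1}(\mathcal{R}_t) - V_{n-1}(\mathcal{R}_t) \bigr]\, dt.
$$
The first (fiberwise) term is bounded by $h \cdot N(P)$ via the base case applied to each $I(\mathbf{a})$, and applying the induction hypothesis to the $(n-1)$-dimensional region $P$ yields $h \cdot N(P) \le h V(P) + \sum_{m=0}^{n-2} h^{n-m} V_m(P)$. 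The second (slicewise) term is bounded by the induction hypothesis applied to each $\mathcal{R}_t$ and then integrated in $t$, giving $\sum_{m=0}^{n-2} h^{n-1-m} \int_\R V_m(\mathcal{R}_t)\, dt$.

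To combine the two contributions, I would use the identity $\int_\R V(\pi_S(\mathcal{R}_t))\, dt = V(\pi_{S \cup \{n\}}(\mathcal{R}))$ for subsets $S \subset \{1, \ldots, n-1\}$ to interpret the slicewise piece as a sum over $(m+1)$-dimensional projections of $\mathcal{R}$ that include the $n$-th coordinate, while the fiberwise piece (through $V(P)$ and the $V_m(P)$) accounts for projections of $\mathcal{R}$ that omit the $n$-th coordinate. These two partitions combine to reconstitute $V_m(\mathcal{R})$ in full for each $m$, producing the claimed bound $\sum_{m=0}^{n-1} h^{n-m} V_m$. The main obstacle is precisely this combinatorial bookkeeping: one has to verify that the fiberwise and slicewise contributions partition the projections of $\mathcal{R}$ without overlap, so that no double-counting occurs and no extraneous constants accumulate across the levels of induction.
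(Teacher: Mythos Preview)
The paper does not give its own proof of this statement: Theorem~\ref{theorem:davenport} is quoted from Davenport's 1951 paper \cite{Dav51} and used as a black box in \S3, so there is no in-paper argument to compare against. Your inductive proof via the decomposition
\[
N(\mathcal{R}) - V(\mathcal{R}) = \sum_{\mathbf{a}\in\Z^{n-1}}\bigl[N(I(\mathbf{a}))-V(I(\mathbf{a}))\bigr] + \int_{\R}\bigl[N_{n-1}(\mathcal{R}_t)-V_{n-1}(\mathcal{R}_t)\bigr]\,dt
\]
is essentially Davenport's original argument, and the bookkeeping you describe (fiberwise contributions account for projections omitting the $n$-th coordinate, slicewise contributions for those including it) is exactly how the induction closes.

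One point deserves a sentence of justification rather than a bare appeal to hypothesis~(2): you assert that each slice $\mathcal{R}_t$ satisfies the hypotheses in dimension $n-1$, but hypothesis~(2) concerns \emph{projections}, not slices. The link is the identity $\pi_S(\mathcal{R}_t) = \bigl(\pi_{S\cup\{n\}}(\mathcal{R})\bigr)\cap\{x_n=t\}$ for $S\subset\{1,\dots,n-1\}$: a line parallel to axis $i$ in $\pi_S(\mathcal{R}_t)$ is then a line parallel to axis $i$ in the projection $\pi_{S\cup\{n\}}(\mathcal{R})$, which by hypothesis~(2) meets that projection in at most $h$ intervals. With this observation your outline is complete.
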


We will use this theorem repeatedly in the next section.

\section{Counting quadratic twists in families}

From \S 2, we see that in order to count elliptic curves in $\cX_0(N)(\Q)$ with respect to naive height, we must count elliptic curves for which there exists a quadratic twist that gives a rational point on $\cX_{1/2}(N)(\Q)$. In this section we state and prove the counting results that will enable us to do so.

\begin{proposition}[\cite{HS17}, Theorem 4.1]
\label{prop:HarronSnowdenTwistTheorem}
Let $f,g \in \Q[t]$ be coprime polynomials of degrees $r$ and $s$ respectively. Let $\max\{r,s\} > 0$ and let $m$ and $n$ be coprime integers such that
$$
\max \left\{\frac{r}{2}, \frac{s}{3}\right\} = \frac{n}{m}.
$$
Assume that either $n=1$ or $m=1$. Let $S(X)$ be the set of pairs $(A,B) \in \Z^2$ such that
\begin{itemize}
    \item $4A^3 + 27B^2 \neq 0$
    \item $\gcd(A^3, B^2)$ not divisible by a 12th power,
    \item $|A |< X^{1/3}$ and $|B| < X^{1/2}$, 
    \item $\exists u , t \in \Q$ such that $A = u^2f(t)$ and $B = u^3 g(t)$.
\end{itemize}

Define 
$$
k(x) = \begin{cases} X^{(m+1)/6n} \quad & m+1> n\\
X^{1/6} \log(X) \quad & m+1 = n\\
X^{1/6} \quad & m+1 <n
\end{cases}
$$
Then, 

$$
S(X) \asymp k(x).
$$
\end{proposition}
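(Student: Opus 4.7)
The plan is to adapt the Harron--Snowden strategy from \cite{HS17} to the present context, where the exponents on $u$ are $2$ and $3$ (reflecting the quadratic twisting action $(A,B)\mapsto (u^2A, u^3B)$) rather than $4$ and $6$ (reflecting the full Weierstrass rescaling used in the original setting). The proof will reduce to a lattice point count in a region of Euclidean space, which is then handled via Davenport's theorem (Theorem~\ref{theorem:davenport}).

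First I would set up a parameterization. By swapping the roles of $f$ and $g$ if necessary, assume $\max\{r/2, s/3\} = r/2 = n/m$. Writing $t = a/c$ in lowest terms with $c \geq 1$, and letting $F, G$ denote the homogenizations of $f, g$ of degrees $r, s$, the defining relations become $A c^r = u^2 F(a,c)$ and $B c^s = u^3 G(a,c)$. In the case $m = 1$, so $r = 2n$, the substitution $u = v\, c^n$ with $v \in \Z$ yields the polynomial parameterization
\[
A = v^2 F(a,c), \qquad B = v^3 c^{3n - s} G(a,c),
\]
where $3n - s \geq 0$ follows from our hypothesis. The coprimality of $f$ and $g$ together with $\gcd(a,c) = 1$ ensures this parameterization is essentially injective (up to bounded multiplicity) onto the set of pairs $(A,B)$ of interest, modulo the $12$th-power minimality condition, which is enforced by M\"obius inversion and contributes a positive constant. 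In the case $n = 1$ with $m \geq 2$, so $r$ (or $s$) is odd and small, one cannot substitute $u = v c^n$ directly; instead, integrality forces $c$ to be a perfect square, and after a substitution of the form $c = e^2$, $u = v e$, a similar polynomial parameterization is obtained in $(v, a, e)$.

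Next I would bound the counting region. Setting $T = \max(|a|, |c|)$, the estimates $|F(a,c)| \asymp T^r$ and $|G(a,c)| \asymp T^s$ (outside a thin exceptional set) combined with $|A| < X^{1/3}$ and $|B| < X^{1/2}$ both translate to the single condition $|v|\, T^n = O(X^{1/6})$. By Davenport's theorem, the number of lattice triples $(v, a, c)$ in the resulting region is, up to lower-order boundary terms, equal to its volume. In the $m = 1$ case this yields
\[
\sum_{T \leq X^{1/(6n)}} T \cdot \frac{X^{1/6}}{T^n} \;=\; X^{1/6} \sum_{T \leq X^{1/(6n)}} T^{1-n},
\]
which evaluates to $X^{(m+1)/(6n)} = X^{1/3}$ when $n = 1$ (the divergent case $m+1 > n$), to $X^{1/6} \log X$ when $n = 2$ (the borderline case $m+1 = n$), and to $X^{1/6}$ when $n \geq 3$ (the convergent case $m+1 < n$). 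In the $n = 1$, $m \geq 2$ regime, the extra square/cube constraint on $c$ modifies the sum, but a direct computation of the corresponding integral still yields $X^{(m+1)/6}$, matching the claim.

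The main obstacle will be handling the case $n = 1$, $m \geq 2$ cleanly, where integrality forces an extra divisibility (squareness or cubeness) on $c$, and the resulting lattice count must still be shown to fit the Davenport framework with the claimed exponent. One also must verify that the error terms in Davenport's theorem, arising from lower-dimensional projections of the region, are genuinely dominated by the main term in each regime, which is routine but somewhat delicate in the logarithmic and convergent cases. Finally, the M\"obius inversion needed to enforce $\gcd(a, c) = 1$ and the $12$th-power-free condition on $\gcd(A^3, B^2)$ must be carried out carefully, although it only contributes a multiplicative constant to the final asymptotic.
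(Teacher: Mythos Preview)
The paper does not prove this proposition; it is quoted verbatim from \cite{HS17}, Theorem~4.1, and used as a black box. The paper does, however, prove the closely related Theorem~\ref{theorem:MainTheorem2Twists}, and that proof (Lemmas~\ref{lemma:quadtwistlemma1}--\ref{lemma:quadtwistlowerbound3}) follows the original Harron--Snowden strategy rather than yours: a prime-by-prime valuation analysis shows that any admissible $(u,t)$ can be written as $t=ab^{-m}$ and $u=qc^{-1}db^{n}$ with $q$ ranging over a fixed finite set $Q\subset\Q^\times$, $d$ a squarefree integer, and $c$ satisfying explicit divisibility constraints; one then counts integer tuples $(a,b,c,d)$ in a region via Theorem~\ref{theorem:davenport}. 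When $m=1$ or $n=1$ one has $h=\lfloor n(m-1)/m\rfloor=0$, so the factor $c$ disappears and the decomposition collapses to $u\approx qdb^{n}$, which is why the original \cite{HS17} argument covers exactly those cases.

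Your homogenization approach is different in spirit and contains a real gap. The assertion that ``the substitution $u=vc^{n}$ with $v\in\Z$'' gives a parameterization of $S(X)$ is unjustified: integrality of $A=(u/c^{n})^{2}F(a,c)$ and $B=(u/c^{n})^{3}c^{3n-s}G(a,c)$ does not force $u/c^{n}$ to be an integer. What the valuation analysis actually shows (see Lemmas~\ref{lemma:quadtwistlemma1} and~\ref{lemma:quadtwistlemma2}) is that $u/c^{n}$ equals a squarefree integer $d$ times an element of a finite set $Q$ depending only on $f,g$; establishing this is the heart of the argument, and you have skipped it. Without it your map $(v,a,c)\mapsto(A,B)$ is not known to hit all of $S(X)$, so the upper bound is unproven. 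Your treatment of the $n=1$, $m\ge 2$ regime (``integrality forces $c$ to be a perfect square'') is also too vague to assess, and the bounded-fiber claims for both the lower bound and the minimality sieve need the kind of resultant/valuation bounds supplied by Lemmas~\ref{lemma:quadtwistlowerbound1}--\ref{lemma:quadtwistlowerbound3}. The Davenport count you sketch in the $m=1$ case is correct in outline once the parameterization is secured, but the missing step is not a technicality.
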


As we will see in Section 4, this theorem is not enough for all the cases that we are interested in. For $N=3$, the condition: `either $n=1$ or $m=1$' is not satisfied. We will thus prove a generalization of this proposition.

\begin{remark}
We note here that we do not prove the most general version of Theorem \ref{theorem:MainTheorem2Twists} possible, since we do not need it. It might be an interesting exercise in analytic number theory to prove such a version, independent of the interpretation of counting points on a moduli space.
\end{remark}


 \begin{theorem}
    \label{theorem:MainTheorem2Twists}
    Let $f,g \in \Q[t]$ be coprime polynomials of degrees $r$ and $s$ respectively. Let $\max\{r,s\} > 0$ and let $m$ and $n$ be coprime integers such that $$\max \left\{\frac{r}{2}, \frac{s}{3} \right\} = \frac{n}{m}.$$
    Suppose that $n, m \neq 1$. Define
    $$
    h = \left\lfloor \frac{n(m-1)}{m} \right\rfloor
    $$
    
    and $w = \max \{\frac{3h}{s}, \frac{2h}{r} \}$. Suppose further that
    \begin{itemize}
        \item $m+1 >n$, 
        \item $\frac{m+1}{n} - (w+1) = -1,$ and
        \item $\min \{3rm - 6h, 2sm - 6h \} \le 6$.
    \end{itemize}

    Let $S(X)$ be the set of pairs $(A,B) \in \Z^2$ such that
    \begin{itemize}
        \item $4A^3 + 27B^2 \neq 0$
        \item $\gcd(A^3, B^2)$ not divisible by a 12th power,
        \item $|A| < X^{1/3}$ and $|B| < X^{1/2}$,
        \item $\exists u , t \in \Q$ such that $A = u^2f(t)$ and $B = u^3 g(t)$.
    \end{itemize}

Then, 

$$
S(X) \asymp  X^{(m+1)/6n}\log(X).
$$
\end{theorem}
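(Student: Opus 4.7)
The plan is to generalize the argument of Harron--Snowden (Proposition \ref{prop:HarronSnowdenTwistTheorem}) by working out in detail the regime where $m, n \neq 1$ and tracking carefully how the $h$--adic parameter enters. First I would parametrize. Writing $t = a/b$ with $\gcd(a,b) = 1$ and $b>0$, and letting $F, G$ denote the homogenizations of $f, g$, every pair $(A,B) = (u^2 f(t), u^3 g(t))$ with $u \in \Q$ can be written as
\[
A = \frac{c^2 F(a,b)}{b^{r'} \ell^2}, \qquad B = \frac{c^3 G(a,b)}{b^{s'} \ell^3},
\]
for some integer $c$, where $\ell$ is an auxiliary coprime denominator and $r', s'$ are determined by $r, s$ and by the choice of $u$. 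Choosing $u$ to produce the minimal Weierstrass model makes $c$ an integer uniquely determined up to sign, and packages the integrality and 12th-power-free conditions as local congruence constraints on $(a,b,c)$ at each prime. As in \cite{HS17}, one can then remove the conditions \enquote{$\gcd(A^3,B^2)$ is 12th-power-free} and \enquote{$\gcd(a,b) = 1$} by a standard M\"obius/inclusion--exclusion argument, reducing the count to a sum of lattice-point counts in regions of $\R^3$.

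Next I would set up the Davenport count. The size conditions $|A|<X^{1/3}$ and $|B|<X^{1/2}$ cut out a region $\cR = \cR(X)$ in $(a,b,c)$-space whose shape depends on $b$: for each fixed value of $b$, the inequalities on $c$ and $a$ read roughly $|c^2 F(a,b)| \ll X^{1/3} b^{r'}$ and $|c^3 G(a,b)| \ll X^{1/2} b^{s'}$. The quantity $h = \lfloor n(m-1)/m \rfloor$ arises as the largest exponent of $b$ that one must extract from $\gcd(F(a,b)^3, G(a,b)^2)$ to land in the 12th-power-free locus, and $w = \max\{3h/s, 2h/r\}$ then describes how the effective bounds on $c$ scale with $b$. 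Applying Theorem \ref{theorem:davenport} for each fixed $b$ gives a count of the form $V(b) + O(\text{lower-dim projections})$, where the volume contribution is of order $b^{(m+1)/n - (w+1)} \cdot X^{(m+1)/6n}$.

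Summing over $b$ in the admissible range, the main term becomes
\[
X^{(m+1)/6n} \cdot \sum_{1 \le b \ll X^{\gamma}} b^{(m+1)/n - (w+1)}.
\]
The hypothesis $(m+1)/n - (w+1) = -1$ is precisely the borderline exponent, so this sum contributes $\log X$, yielding $S(X) \gg X^{(m+1)/6n}\log X$. The hypothesis $m+1 > n$ confirms that we are in the regime where this contribution dominates the \enquote{trivial} $X^{1/6}$-sized contribution already handled in \cite{HS17}. The matching upper bound follows from the same Davenport estimate together with $O$-control of the boundary.

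The main obstacle, as in all such lattice-point counts, is the error analysis: the sum of the lower-dimensional projection volumes in Davenport's bound could in principle swamp the $\log X$ main term. Here is where the third hypothesis $\min\{3rm - 6h, 2sm - 6h\} \le 6$ enters: it controls the rate at which the degenerate projections blow up relative to the main term, and guarantees that their contribution remains $O(X^{(m+1)/6n})$. The bulk of the work in the proof will be a careful exponent bookkeeping to verify this bound, including ensuring that the $b = 0$ and $c = 0$ slices, as well as the contributions from $(a,b)$ with large common divisor with the leading coefficients of $F, G$, do not produce spurious large terms. Once this bookkeeping is in place, the asymptotic $S(X) \asymp X^{(m+1)/6n}\log X$ follows by combining the main term with the error estimate, exactly as in the proof strategy of \cite{HS17}.
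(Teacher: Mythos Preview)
Your sketch misses the key structural decomposition that drives the paper's proof. The paper does \emph{not} parametrize by $t=a/b$ with $\gcd(a,b)=1$; it writes $t=a/b^m$ (with $\gcd(a,b^m)$ $m$-th-power-free) and, crucially, decomposes $u$ as $u = q c^{-1} d\, b^n$ where $q$ ranges over a finite set, $d$ is a squarefree integer (the quadratic-twist parameter), and $c$ is a local correction supported on primes $p$ with $p\mid b$ and $p^w\mid a$, satisfying $\val_p(c)\le h$. The $b^m$ denominator is what makes $b^n$ an integer power absorbing the denominators of $f(t),g(t)$; with your $t=a/b$ the needed power of $b$ in $u$ is $n/m\notin\Z$, and you never recover a clean integer parametrization. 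More importantly, the $\log X$ in the paper comes from summing $c^{(m+1)/n-(w+1)}=c^{-1}$ over the correction variable $c$, \emph{after} the sum over the twist variable $d$ has already converged (this is where $m+1>n$ is used). Your proposal never isolates $d$ from $c$, and your claim that the harmonic sum arises from summing over $b$ is not what happens: in the actual bounds $|b|\lesim X^{1/6n}c^{1/n}d^{-1/n}$, summing over $b$ contributes a power of $X$, not a logarithm.

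You have also misread the role of the third hypothesis $\min\{3rm-6h,\,2sm-6h\}\le 6$. It is not an error-term control for Davenport's lemma in the upper bound. It appears only in the \emph{lower} bound, specifically in bounding the fibers of the map $S_3(X)\to S(X)$ (Lemmas \ref{lemma:quadtwistlowerbound1}--\ref{lemma:quadtwistlowerbound2}): one must show that for $(a,b,c)\in S_2(X)$ and large primes $p\mid b$, no twelfth power of $p$ divides $\gcd(A^3,B^2)$, so that passing to the minimal model is bounded-to-one. The hypothesis is exactly what forces $\min\{3rl-6h,\,2sl-6h\}<12$ for the relevant $l$, and has nothing to do with boundary projections.
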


\begin{remark}
    Note that the hypotheses on $m,n, r$ and $s$ make it so that there aren't many choices of these variables that satisfy all the hypotheses together. The degree conditions for $\cX_0(3)$, which give $m=3, n=2, h=1$ and $w=2$, are perhaps the only moduli problem of interest that that satisfy these. However, stating the theorem in this manner instead of using numbers makes the method less opaque and more amenable to generalization.
\end{remark}

\subsubsection{Proof of Theorem \ref{theorem:MainTheorem2Twists}}

The proof of this theorem closely follows that in \cite{HS17}. We provide the key parts of the proof here for the sake of completeness. We prove the upper bound and the lower bound in two separate sections. For the reader's convenience, we outline each proof first.

\begin{notation}
    For any two real valued functions $h(X)$ and $k(X)$, we say that $h(X) \lesim k(X)$ if there is a positive constant $C$ such that $h(X) \le  C k(X)$.
\end{notation}

\textbf{Upper bound.} Our goal is to reduce the problem of counting pairs in $S(X)$ to the problem of counting tuples of integers in a bounded region, perhaps with some divisibility conditions. Let $S_1(X)$ be the set of $u$, $t$ such that $(u^2f(t), u^3g(t)) \in S(X).$ Counting $S_1(X)$ gives an upper bound for $S(X)$. We will express $u$ and $t$ as $qc^{-1}db^n$ and $ab^{-m}$ respectively for some integers $a$, $b$, $c$ and $d$ and some rational number $q$. Lemmas \ref{lemma:countinglemma1}, \ref{lemma:quadtwistlemma1} and \ref{lemma:quadtwistlemma2} enable us to do this. The next key observation is that there are only finitely many possibilities for $q$. Thus for the kind of upper bound that we are looking for, we can count 4-tuples of integers in a particular region. Lemma \ref{lemma:quadtwistlemma3} gives the bounds for such a region. Lemma \ref{lemma:quadtwistlemmafinalupperbound} outlines what divisibility conditions these integers must satisfy, and also calculates the number of such tuples.
     \hfill $\square$\\
     
\begin{lemma}[\cite{HS17}, Lemma 2.2]
    \label{lemma:countinglemma1}
        For each place $p$ of $\Q$, there is a constant $c_{p}>0$ such that for each $t \in \Q$:
        $$
        \max(|f(t)|_{p}, |g(t)|_{p}) \ge c_{p}.
        $$
        Furthermore, we can take $c_{p} = 1$ for all sufficiently large $p$. 
\end{lemma}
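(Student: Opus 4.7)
The plan is to treat the archimedean and non-archimedean places separately, in each case leveraging the coprimality of $f$ and $g$ in $\Q[t]$ to extract a uniform lower bound on $\max(|f(t)|_p, |g(t)|_p)$.

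For $p = \infty$, coprimality in $\Q[t]$ precludes any common root of $f$ and $g$ in $\overline{\Q}$, and in particular in $\R$. Hence $\max(|f(t)|, |g(t)|)$ is a continuous strictly positive function on $\R$. Since $\max\{r,s\} > 0$, at least one of $|f|$ or $|g|$ grows to infinity with $|t|$, so the maximum exceeds $1$ outside some compact interval; on that compact interval the positivity and continuity of the maximum force a positive minimum. Combining the two bounds yields $c_\infty > 0$.

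For a non-archimedean $p$, I first clear denominators to assume $f, g \in \Z[t]$; this only adjusts the eventual $c_p$ by a uniform factor and leaves the ``$c_p = 1$ for large $p$'' assertion unaffected. Let $F(u,v) = v^r f(u/v)$ and $G(u,v) = v^s g(u/v)$ be the homogenizations in $\Z[u,v]$, which inherit coprimality as forms. Applying Cramer's rule to the Sylvester map $\Z[u,v]_{s-1} \oplus \Z[u,v]_{r-1} \to \Z[u,v]_{r+s-1}$ sending $(A,B) \mapsto AF + BG$ produces integer polynomial identities
$$A_i(u,v)\,F(u,v) + B_i(u,v)\,G(u,v) = R \cdot u^i v^{r+s-1-i}, \qquad 0 \le i \le r+s-1,$$
with $A_i, B_i \in \Z[u,v]$ homogeneous and $R = \mathrm{Res}(F, G) \in \Z \setminus \{0\}$. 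For coprime integers $(u,v)$ the collection $\{u^i v^{r+s-1-i}\}_{i}$ has integer gcd $1$ (any common prime divisor would have to divide both $u^{r+s-1}$ and $v^{r+s-1}$), so the identities force $\gcd\bigl(F(u,v), G(u,v)\bigr) \mid R$ in $\Z$. Equivalently, $\max(|F(u,v)|_p, |G(u,v)|_p) \ge |R|_p$.

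To translate back, I write $t = u/v \in \Q$ in lowest terms, so $f(t) = F(u,v)/v^r$ and $g(t) = G(u,v)/v^s$. Because $|v|_p \le 1$, the factors $|v|_p^{-r}$ and $|v|_p^{-s}$ are at least $1$, giving
$$\max\bigl(|f(t)|_p, |g(t)|_p\bigr) \ge \max\bigl(|F(u,v)|_p, |G(u,v)|_p\bigr) \ge |R|_p.$$
I then set $c_p = |R|_p$; since $R \in \Z \setminus \{0\}$, $|R|_p = 1$ for all but the finitely many primes dividing $R$, which yields the final clause. The only technical input is the Sylvester/resultant identity above; aside from this, the argument is direct, with the sole bookkeeping being the passage from $f, g \in \Q[t]$ to their integer-coefficient homogenizations.
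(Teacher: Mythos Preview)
Your argument is correct. Note, however, that the paper does not actually supply its own proof of this lemma: it is quoted verbatim from \cite{HS17} (their Lemma~2.2) and used as a black box, so there is no in-paper argument to compare against.

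That said, your resultant/Sylvester approach is a standard and clean way to establish the statement. The archimedean bound follows, as you say, from compactness together with the absence of common real roots and the growth of $\max(|f|,|g|)$ at infinity. For finite $p$, the adjugate identities $A_iF+B_iG=R\,u^iv^{r+s-1-i}$ (obtained from the Sylvester matrix of $f,g$, then homogenized) do indeed force $\gcd(F(u,v),G(u,v))\mid R$ whenever $\gcd(u,v)=1$, and since $|v|_p\le 1$ the passage from $|F|_p,|G|_p$ back to $|f(t)|_p,|g(t)|_p$ only helps. The denominator-clearing step affects at most the finitely many primes dividing the common denominator, so the ``$c_p=1$ for large $p$'' clause survives. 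One cosmetic remark: what you call $\mathrm{Res}(F,G)$ is really the ordinary univariate resultant $\mathrm{Res}_t(f,g)$ (equivalently the determinant of the Sylvester map you wrote down); this is consistent with how you use it.
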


Let $S_1(X)$ be the set of $u$, $t$ such that $(u^2f(t), u^3g(t)) \in S(X).$ 

\begin{lemma}
\label{lemma:quadtwistlemma1}
         For each prime $p$ there is a constant $C_p$ such that for all $(u,t) \in S_1(X)$, we have:
        \begin{align}
            \val_p(u) = \epsilon{'} + \begin{cases} \lceil -\frac{n}{m}\val_p(t) \rceil & \val_p(t)<0\\
            0 & \val_p(t) \ge 0
            \end{cases}
        \end{align}
        for some $|\epsilon{'}| \le C_p$. Moreover, we can take $C_p = 1$ for all $p$ sufficiently large.
    \end{lemma}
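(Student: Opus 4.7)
The plan is to translate the integrality of $A = u^2 f(t)$ and $B = u^3 g(t)$, together with the $12$th-power-free condition on $\gcd(A^3,B^2)$, into two-sided bounds on $\val_p(u)$ in terms of $\val_p(f(t))$ and $\val_p(g(t))$, and then to evaluate the latter valuations case by case according to the sign of $\val_p(t)$. From $A,B\in\Z$ one has $2\val_p(u) + \val_p(f(t)) \ge 0$ and $3\val_p(u) + \val_p(g(t)) \ge 0$, whence
$$\val_p(u) \ge M_p(t) := \max\!\bigl(-\tfrac{1}{2}\val_p(f(t)),\ -\tfrac{1}{3}\val_p(g(t))\bigr).$$
The minimality condition $\val_p(\gcd(A^3,B^2)) < 12$ reads $6\val_p(u) + \min(3\val_p(f(t)),\,2\val_p(g(t))) < 12$, which rearranges to $\val_p(u) < M_p(t) + 2$. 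Combining the two bounds forces $\val_p(u) - \lceil M_p(t)\rceil \in \{0,1\}$, so it suffices to compute $\lceil M_p(t)\rceil$ up to an error of absolute value at most $1$.

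Next I would evaluate $M_p(t)$ in the two cases. If $\val_p(t) \ge 0$, then $f(t),g(t)\in\Z_p$, so $M_p(t) \le 0$; and when $p$ is large enough that $c_p=1$ in Lemma \ref{lemma:countinglemma1}, at least one of $\val_p(f(t))$ or $\val_p(g(t))$ vanishes, giving $M_p(t) = 0$ and therefore the second branch of the claim. If $\val_p(t) < 0$, then for $p$ large enough that the leading coefficients of $f$ and $g$ are $p$-adic units, the leading terms dominate the $p$-adic sizes of $f(t)$ and $g(t)$, so that $\val_p(f(t)) = r\val_p(t)$ and $\val_p(g(t)) = s\val_p(t)$; hence
$$M_p(t) = -\val_p(t)\,\max\!\left(\tfrac{r}{2},\tfrac{s}{3}\right) = -\frac{n}{m}\val_p(t),$$
and taking $\lceil\cdot\rceil$ yields the first branch of the formula.

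The main technical obstacle is bookkeeping at the finitely many small primes, where neither $c_p = 1$ nor the leading-coefficient-unit hypothesis holds. At such primes, $\val_p(f(t))$ and $\val_p(g(t))$ can deviate from the naive values $r\val_p(t)$ and $s\val_p(t)$, but this deviation is uniformly bounded in $t$: it is controlled by the $p$-adic valuations of the non-leading coefficients of $f$ and $g$ (which bound how much lower-order terms can perturb the leading one) and by $\val_p(\operatorname{Res}(f,g))$ (which bounds how much $f(t)$ and $g(t)$ can simultaneously gain valuation), all of which are fixed integers depending only on $f$ and $g$. Absorbing these finitely many fixed contributions, together with the $\pm 1$ slack from the ceiling argument above, into a single constant $C_p$ yields the claim for every prime $p$, with $C_p = 1$ once $p$ exceeds every one of these fixed quantities.
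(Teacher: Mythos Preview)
Your proof is correct and follows essentially the same approach as the paper: integrality of $A,B$ gives the lower bound $\val_p(u)\ge M_p(t)$, the $12$th-power-free condition gives $\val_p(u)<M_p(t)+2$, and then one evaluates $M_p(t)$ by cases on the sign of $\val_p(t)$, using that for large $p$ the leading coefficients are $p$-units and $c_p=1$. The paper's write-up differs only in notation (it takes ceilings before the max rather than after, which is harmless) and in invoking Lemma~\ref{lemma:countinglemma1} directly rather than the resultant for the small-prime bookkeeping.
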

    
    \begin{proof}
        The proof of this lemma closely follows that of Lemma 2.3 in \cite{HS17}. Fix a prime $p$. Since $A$ and $B$ must be integral, we have that:
        \begin{align}
            \val_p(A) &= 2\val_p(u) + \val_p(f(t)) \ge 0\\
            \val_p(B) &= 3\val_p(u) + \val_p(g(t)) \ge 0
        \end{align}
    
        Thus,
        \begin{align}
        \label{equation:valuationofulowerbound}
        \val_p(u) \ge \max \left( \lceil -\frac{1}{2} \val_p(f(t)) \rceil, \lceil -\frac{1}{3} \val_p(g(t)) \rceil  \right) =: K.
        \end{align}
        
        Note that if $\val_p(u) \ge 2+K$, then by replacing $u$ by $p^2u$ we see that $p^{12}\mid \gcd(|A|^3, B^2)$. Thus we must have $K \le \val_p(u) \le K+1$. The rest of the proof goes exactly like in \cite{HS17}. Suppose $\val_p(t)< 0$. Pick $K_1$ such that $|\val_p(f(t)) - r\val_p(t)| < K_1$ and $|\val_p(g(t)) - s\val_p(t)|< K_1$ for all such $t$. Note that $K_1$ can depend on $p$ and is 0 for large enough $p$. Then,
       $$
       K = \epsilon + \max \left( \lceil -\frac{r}{2} \val_p(t) \rceil, \lceil -\frac{s}{3} \val_p(t) \rceil  \right) = \epsilon + \left \lceil \frac{-n}{m} \val_p(t)  \right\rceil 
       $$
       where $|\epsilon| < K_2$ for some $K_2$. Thus we have:
       $$
       \epsilon + \left\lceil \frac{-n}{m} \val_p(t)  \right\rceil \le \val_p(u) \le \epsilon + \left\lceil \frac{-n}{m} \val_p(t)  \right\rceil + 1
       $$
       for $\val_p(t) < 0$. \\
       
       Now consider the case when $\val_p(t) \ge 0$. By Lemma \ref{lemma:countinglemma1}, there exists $K_3$ such that $\min( \val_p(f(t)), \val_p(g(t)) \le K_3 $. Further, $K_3 = 0$ for $p\gg0$. Thus $-\val_p(u) \le K_4$ for some constant $K_4$. Since $\val_p(t) \ge 0$, there is a $K_5$ such that $\val_p(f(t)) \ge K_5$ and $\val_p(g(t)) \ge K_5$ for all such $t$. This gives a lower bound on $-\val_p(u)$, appealing again to (\ref{equation:valuationofulowerbound}). Thus there is a constant $K_7$ such that $|\val_p(u)| \le K_7$. We remark here to avoid confusion that all the $K_i$'s are constant with respect to $t$ and $u$, but do depend on $p$, $f$ and $g$.\\
       
       This gives us first part of the lemma. For the second part of the lemma, we need only take, as in \cite{HS17}, $p\gg0$ such that: (1) the coefficients of $f$ and $g$ are $p$-integral, (2) the leading coefficients of $f$ and $g$ are $p$-units and (3) the constant $c_p$ in Lemma \ref{lemma:countinglemma1} can be taken to be 1. Since $K \le \val_p(u) \le K+1$, we can only get $C_p=1$ for $p \gg0$.
       
    \end{proof}
    
 The next step is to prove an analogue of Lemma 2.4 in \cite{HS17}. This will enable us to reduce our problem to that of counting lattice points in a region. We start with some notation. Recall that $w = \max\{3h/s, 2h/r \}$. For a given pair of positive integers $(a,b)$, we say a prime $p$ satisfies $(*)$ if:
        $$
        p|b \implies p^{w}| a
        $$
           
\begin{lemma}
    \label{lemma:quadtwistlemma2}
     Suppose $(u,t) \in S_1(X)$. There is a \emph{finite} set $Q \subset \Q^{\times}$ (independent of $u$ and $t$) such that: we can write $t = ab^{-m}$ and $u = qc^{-1}db^n$, where:
        \begin{enumerate}
            \item $a,b \in \Z$, with $b>0$,
            \item $\gcd(a,b^m)$ is $m$-th power free,
            \item $d$ is a squarefree integer,
            \item $q \in Q$, and
            \item $c \in \Z$ such that $\val_p(c) \le h$ for all $p$ and $\val_p(c)>0$ if and only if $p$ satisfies $(*)$.
        \end{enumerate}
\end{lemma}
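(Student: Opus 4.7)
The plan is to construct $a$, $b$, $c$, $d$, $q$ by specifying their $p$-adic valuations prime by prime, following the strategy of Lemma 2.4 of \cite{HS17}. The new wrinkle relative to that paper is the presence of the extra factors $c$ and $d$, which exist precisely to absorb the \emph{floor correction} appearing in Lemma \ref{lemma:quadtwistlemma1}; this correction vanishes whenever $n=1$ or $m=1$, which is why no analogous factors appear in \cite{HS17}.

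First, I would invoke the standard unique decomposition $t = a b^{-m}$: for each prime $p$ set $\val_p(b) = \max\bigl(0,\, \lceil -\val_p(t)/m \rceil\bigr)$ and $\val_p(a) = \val_p(t) + m\val_p(b)$. This makes $b>0$, $a \in \Z$, and $\gcd(a,b^m)$ $m$-th-power-free. Note that $p \mid b$ iff $\val_p(t) < 0$, in which case $\val_p(a) \in \{0,\dots,m-1\}$. Substituting into Lemma \ref{lemma:quadtwistlemma1} gives, for $p \mid b$,
\[
\val_p(u) \;=\; \epsilon_p' + n\val_p(b) - \left\lfloor \tfrac{n\val_p(a)}{m} \right\rfloor,
\]
with $|\epsilon_p'| \le C_p$ and $C_p = 1$ for all but finitely many $p$; for $p \nmid b$, $\val_p(u) = \epsilon_p'$.

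Next I would read $c$, $d$, $q$ off from these formulas. Define $\val_p(c) := \lfloor n\val_p(a)/m \rfloor$: this is a non-negative integer, and since $\val_p(a) \le m-1$ whenever it is nonzero for a $p \mid b$, it is bounded above by $h = \lfloor n(m-1)/m \rfloor$. The floor is $\ge 1$ precisely when $\val_p(a) \ge \lceil m/n \rceil$, and the hypothesis $(m+1)/n - (w+1) = -1$ forces $\lceil m/n \rceil = w$; hence $\val_p(c) > 0$ exactly when $p \mid b$ and $p^w \mid a$, which is the content of $(*)$. After multiplying $u$ by $c/b^n$, the residual valuation at each prime is $\epsilon_p'$, which for large $p$ lies in $\{0,1\}$ (as one sees directly from the proof of Lemma \ref{lemma:quadtwistlemma1}) and is merely bounded at the finitely many exceptional primes $P$. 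The $\{0,1\}$-valued contribution at large primes is absorbed into a squarefree integer $d$, while the bounded contribution at primes in $P$, together with the sign of $u$, is absorbed into $q \in \Q^\times$. Since $|P|<\infty$ and $\val_p(q)$ is bounded at each $p \in P$ by a constant depending only on $f, g$, the set $Q$ of possible values of $q$ is finite and independent of $(u,t)$.

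The main obstacle---and the one place where the special hypotheses of Theorem \ref{theorem:MainTheorem2Twists} on $m$, $n$, $r$, $s$ are truly used---is the calibration step identifying $\val_p(c) > 0$ with the divisibility condition $p^w \mid a$ appearing in $(*)$. Once this identification is in hand, conditions (1)--(5) follow by checking valuations prime by prime.
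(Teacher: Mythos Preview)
Your argument is correct and follows the same prime-by-prime decomposition strategy as the paper, but the two proofs diverge on how condition (5) is obtained.  You \emph{define} $\val_p(c) = \lfloor n\val_p(a)/m\rfloor$ deterministically from $(a,b)$ and then read off the biconditional $\val_p(c)>0 \iff p\mid b,\ p^{w}\mid a$ from the arithmetic identity $\lceil m/n\rceil = w$, which you extract from the special hypothesis $(m+1)/n - (w+1) = -1$ of Theorem~\ref{theorem:MainTheorem2Twists}.  The paper instead defines $c$ by factoring the large-prime part of $ub^{-n}$ as $c^{-1}d$ (so $c$ depends on $u$, not just on $(a,b)$) and then argues that the integrality of $A=u^2f(t)$ and $B=u^3g(t)$ forces $\val_p(a)\ge w$ whenever $\val_p(c)>0$; this is really an ``only if'' and is enough for the upper bound in Lemma~\ref{lemma:quadtwistlemmafinalupperbound}.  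Your route is arguably cleaner: the iff in (5) holds on the nose, and the residual $\epsilon_p' \in \{0,1\}$ for $p\ge N_0$ (which, as you say, is visible in the proof of Lemma~\ref{lemma:quadtwistlemma1} from $K\le \val_p(u)\le K+1$) drops directly into the squarefree $d$.  The trade-off is that you invoke an extra hypothesis of Theorem~\ref{theorem:MainTheorem2Twists} inside the lemma, whereas the paper's integrality argument is more self-contained.  One small point worth making explicit: your identification $\lceil m/n\rceil = w$ tacitly uses that $w=(m+1)/n$ is an integer (so that $p^{w}\mid a$ is unambiguous); this holds in the case of interest, and in any event $\lceil m/n\rceil = \lceil (m+1)/n\rceil$ since $\gcd(m,n)=1$ and $n>1$.
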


\begin{proof}
        Given $t \in \Q$, one can always write $t = ab^{-m}$ satisfying (1) and (2). Pick any such representation. We now analyze $ub^{-n}$ and show that $\val_p(ub^{-n})$ must satisfy the required constraints. For convenience we will fix $N_0$ to be an integer such that $C_p=1$ for $p \ge N_0$. Such an $N_0$ exists by Lemma \ref{lemma:quadtwistlemma1}.\\
        
        We divide the set of all primes into two groups: $p|b$ and $p \nmid b$. If $p \nmid b$, then $\val_p(t) \ge 0$ and by Lemma \ref{lemma:quadtwistlemma1}, we have $|\val_p(ub^{-n})| \le C_p$. If $p|b$, then we write $-\val_p(t) = m\val_p(b) - k$, where $0\le k < m$. Therefore, by Lemma \ref{lemma:quadtwistlemma1} again, we have:
        $$
        \val_p(u) = \epsilon{'} + n\val_p(b) + \lceil \frac{-n}{m} k\rceil.
        $$
        Therefore for any $p$, we have $-C_p - h \le \val_p(ub^{-n}) \le C_p$.\\
        
        If $p \le N_0$, we have no control over $C_p$, but we know that there are finitely many possibilities for the $N_0$-smooth part of $ub^{-n}$, since $|\val_p(ub^{-n})| \le C_p + h$ (here,  $N_0$-smooth means the part of the numerator or denominator that is divisible only by primes less than or equal to $N_0$). For $p \ge N_0$, we have:
        $$
        |\val_p{(ub^{-n}})| \le 1 \text{ if } p\nmid b 
        $$
        and
        $$
        -1-h \le \val_p{(ub^{-n})} \le 1 \text{ if } p|b.
        $$
        
        In the case that $p\nmid b$ and $p\ge N_0$, we see that $\val_p(t) \ge 0$. Further, in the proof of the previous lemma, $N_0$ is picked so that for $p \ge N_0$, the $\val_p(f(t)) \ge 0$ and $\val_p(g(t)) \ge 0$, with at least one of them being an equality. In particular, this implies that $\val_p(ub^{-n}) \ge 0$ for such $p$. Similarly, for $p|b$ ($\val_p(t) < 0$) and $p \ge N_0$, we can take $\epsilon{'} = 0$. Thus, $\val_p(ub^{-n}) = \lceil \frac{-n}{m}k\rceil$ or $ \lceil \frac{-n}{m}k\rceil +1$. Therefore $\val_p(ub^{-n}) \ge -h$.\\
        
        We factor the $p \ge N_0$ part of $ub^{-n}$ as $c^{-1}d$, where $\val_p(d) \neq 0$ iff either $p\nmid b$  or $\val_p(ub^{-n}) = 1$ if $p|b$. Further, in these cases, we set $\val_p(d) = \val_p(ub^{-n})$. The previous paragraph shows that $d$ is a squarefree integer and that $\val_p(c) \le h$.  \\
       
       We now explain the condition $(*)$. This comes from the fact that $A$ and $B$ are required to be integers. For any $p$:
       \begin{align*}
        \val_p(u^2f(t)) &= 2\val_p(qc^{-1}b^n) + \val_p(f(t))\\ 
        &= 2\val_p(q) - 2\val_p(c) + 2m(n/m - r/2)\val_p(b) + r\val_p(a) + K_1\\
        &\ge 2\val_p(q) + K_1 + r\val_p(a) - 2\val_p(c)
       \end{align*}
       
       where $K_1$ is a positive constant that can be taken to be 0 for $p\gg0$. Similarly, for $B$, we get that: $\val_p(u^3g(t)) = 3\val_p(q) + K_1 + s\val_p(a) - 3\val_p(c).$ Since $q$ is $N_0$-smooth, for $p$ large enough, the condition of integrality of $A$ and $B$ translates directly to condition $(*)$. Further, since we are only interested in an upper bound for the asymptotic growth, not imposing conditions on say, $2\val_p(q) + K_1$ for small $p$ causes us no harm. 
\end{proof}

Now consider $(u,t) \in S_1(X)$ and write them as in Lemma \ref{lemma:quadtwistlemma2}. The fact that $\max \{|A|^3, B^2\} < X$ implies bounds for $a$, $b$, $c$ and $d$, which we now find.
    
    \begin{lemma}
            \label{lemma:quadtwistlemma3}
            Let $(u,t) \in S_1(X)$. Represent $u=qc^{-1}db^n$ and $t=ab^{-m}$ as in Lemma \ref{lemma:quadtwistlemma2}. Then,
            $$
            |a| \lesim  X^{m/6n} c^{m/n} d^{-m/n} \qquad \text{and} \qquad |b| \lesim  X^{1/6n} c^{1/n} d^{-1/n}.
            $$
        \end{lemma}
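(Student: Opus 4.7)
The argument should be a routine calculation once one combines the two height bounds into a single inequality. My plan is the following. First, I would rewrite the defining inequalities $|A| < X^{1/3}$ and $|B| < X^{1/2}$ in the combined form
$$
|u|^6 \cdot \max\{|f(t)|^3,\ |g(t)|^2\} \;=\; \max\{|A|^3,\ |B|^2\} \;\le\; X.
$$

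Next, I would establish the auxiliary archimedean estimate
$$
\max\{|f(t)|^3,\ |g(t)|^2\} \;\gtrsim\; \max\{1,\ |t|^{6n/m}\} \quad \text{for all } t \in \R.
$$
This rests on coprimality: since $f$ and $g$ share no complex roots, $\max\{|f(t)|^3,|g(t)|^2\}$ is bounded below by a positive constant on any compact subset of $\R$, and for $|t|$ large it grows like $\max\{|t|^{3r}, |t|^{2s}\} = |t|^{\max(3r,2s)} = |t|^{6n/m}$, using $\max(r/2,s/3) = n/m$.

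Then I would substitute $u = qc^{-1}db^n$ and $t = ab^{-m}$, absorbing $q$ (which lies in a finite set) into the implicit constants, to rewrite the combined bound as
$$
c^{-6}\, d^{6}\, |b|^{6n} \cdot \max\{1,\ |a|^{6n/m}/|b|^{6n}\} \;\lesim\; X.
$$
Finally, I would case-split on whether $|a| \le |b|^m$ or $|a| > |b|^m$. In the first case, the maximum equals $1$, giving $|b|^{6n} \lesim X\, c^{6}\, d^{-6}$ directly, and then $|a| \le |b|^m$ yields the claimed bound on $|a|$. In the second case, the maximum equals $|a|^{6n/m}/|b|^{6n}$, so $|a|^{6n/m} \lesim X\, c^{6}\, d^{-6}$, giving the bound on $|a|$, and then $|b| < |a|^{1/m}$ gives the bound on $|b|$.

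The main obstacle is the archimedean estimate in the second step. At face value, $\max\{|f(t)|^3, |g(t)|^2\}$ might become small along the common zero set of $f$ and $g$, destroying the argument; it is precisely the coprimality hypothesis that saves us, and once this estimate is in hand, the rest is bookkeeping using $n/m = \max(r/2,s/3)$.
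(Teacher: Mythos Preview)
Your proposal is correct and follows essentially the same approach as the paper: both use coprimality of $f$ and $g$ to bound $\max\{|f(t)|^{1/2},|g(t)|^{1/3}\}$ away from zero, use the degree condition $\max(r/2,s/3)=n/m$ for the growth at infinity, and then case-split on $|t|\le 1$ versus $|t|>1$ (which is exactly your split $|a|\le |b|^m$ versus $|a|>|b|^m$). Your packaging of the archimedean estimate as the single inequality $\max\{|f(t)|^3,|g(t)|^2\}\gtrsim\max\{1,|t|^{6n/m}\}$ is a clean consolidation of what the paper does in two separate steps, but the underlying argument is the same.
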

        
        \begin{proof}
              If $A = u^2f(t)$ and $B = u^3g(t)$, the bound $\max(|A|^3, B^2) < X$ translates to:
    $$
    |u| \max(|f(t)|^{1/2}, |g(t)|^{1/3} ) < X^{1/6}.
    $$
    Let $K$ be the positive constant such that $\max(|f(t)|^{1/2}, |g(t)|^{1/3} ) > K$ for all $t$. Thus:
    $
    |u| \le K^{-1}X^{1/6}.
    $
    Let $M_2 = K^{-1}(\max_{q \in Q}|q|^{-1})$. Thus, we have that:
    $$
    |c^{-1}db^{n}| < M_2 X^{1/6}
    $$
    
    i.e. $|b| < M_2 X^{1/6n} c^{1/n} d^{-1/n}$.\\
    
    We now turn to bounding $a (=tb^{m})$. Suppose $t<1$. Then, by the above bound for $b$, we have $|a| < M_2 X^{m/6n} c^{m/n} d^{-m/n}$. If $t \ge 1$, then we can find a constant $M>0$ such that $M^2|t|^r \le |f(t)|$ and $M^3|t|^s \le |g(t)|$. Thus we have:
        $$
        X^{1/6} > |u|\max(|f(t)|^{1/2}, |g(t)|^{1/3}) > M|u|\max(|t|^{r/2}, |t|^{s/3}) = M|ut^{n/m}|.
        $$
        Now, $|ut^{n/m}| = |qc^{-1}da^{n/m}|$ and so, $|c^{-1}da^{n/m}| < M^{-1}(\max_{q \in Q} |q|^{-1})X^{1/6} $. Thus, we see that:
        $$
        |a| \lesim  X^{m/6n} c^{m/n} d^{-m/n}.
        $$
        \end{proof}


 \begin{lemma}
 \label{lemma:quadtwistlemmafinalupperbound}
    Under the hypotheses of Theorem \ref{theorem:MainTheorem2Twists}, $|S_1(X)| \lesim X^{m+1/6n}\log(X)$.
 \end{lemma}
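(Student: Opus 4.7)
The plan is to parametrize $S_1(X)$ by the tuples $(a,b,c,d,q)$ of Lemma \ref{lemma:quadtwistlemma2}, fix $(c,d)$, count the compatible $(a,b)$ via Davenport's theorem (Theorem \ref{theorem:davenport}), and sum. Since $q$ lies in the finite set $Q$ it contributes only a bounded multiplicative factor and can be suppressed in the asymptotic.

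Fix $c$ and $d$. Note that in the main case of interest (the $\cX_0(3)$ application) one has $h=1$, so $c$ is squarefree; in general the substitution below is carried out with $\mathrm{rad}(c)$ in place of $c$, and $c/\mathrm{rad}(c)$ can be handled by a divisor-function argument. The condition $(*)$ in Lemma \ref{lemma:quadtwistlemma2} forces $c^w\mid a$ and $c\mid b$, so the substitution $a=c^w a'$, $b=c b'$ combined with Lemma \ref{lemma:quadtwistlemma3} places $(a',b')$ in a rectangular region of volume
$$V \;\lesim\; X^{(m+1)/6n}\, c^{(m+1)/n-(w+1)}\, d^{-(m+1)/n} \;=\; X^{(m+1)/6n}\, c^{-1}\, d^{-(m+1)/n},$$
using the hypothesis $(m+1)/n-(w+1)=-1$. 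By Theorem \ref{theorem:davenport}, the number of lattice points $(a',b')$ in this region is at most $V$ plus boundary corrections of order the longer side length (plus $1$).

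Summing $V$ over squarefree $d$ produces a convergent series, since the exponent $-(m+1)/n < -1$ by the hypothesis $m+1>n$; this sum contributes a bounded constant. Summing over $c\geq 1$ gives a harmonic-type sum $\sum c^{-1}$ truncated at the largest $c$ for which the $(a',b')$-box contains any lattice point, namely $c\lesim X^{1/(6n)}d^{-1/n}$; this truncation contributes $\asymp\log(X)$. Multiplying the two sums, the main-term total is $\lesim X^{(m+1)/6n}\log(X)$, matching the stated bound.

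The main technical obstacle is to show that the Davenport boundary contributions also sum to at most $X^{(m+1)/6n}\log(X)$, rather than overtaking the main term. The boundary contribution from a fixed $(c,d)$ is of order $\max\{X^{m/6n}c^{m/n-w}d^{-m/n},\,X^{1/6n}c^{1/n-1}d^{-1/n}\}$, and the third hypothesis $\min\{3rm-6h,\,2sm-6h\}\leq 6$ is precisely what is needed to ensure that in at least one of these two candidates the exponents in $X$, $c$, and $d$ are favorable enough for the corresponding double sum to be absorbed into $X^{(m+1)/6n}\log(X)$. A case analysis on which of the two candidates dominates, combined with the main-term estimate above, will complete the proof of the upper bound $|S_1(X)|\lesim X^{(m+1)/6n}\log(X)$.
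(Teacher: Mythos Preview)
Your overall strategy---parametrize $S_1(X)$ via Lemma \ref{lemma:quadtwistlemma2}, fix $(c,d)$, count $(a,b)$ with Davenport's theorem, then sum---is exactly the paper's approach, and your main-term computation matches theirs. The substitution $a=c^w a'$, $b=cb'$ (with your caveat about $\mathrm{rad}(c)$) is the right way to produce the density factor $c^{-(w+1)}$, and the hypothesis $(m+1)/n-(w+1)=-1$ is indeed what turns the $c$-sum into a harmonic sum truncated at a power of $X$, yielding the $\log X$.

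The one genuine error is your invocation of the third hypothesis $\min\{3rm-6h,\,2sm-6h\}\le 6$ to control the Davenport boundary terms. That hypothesis plays no role in the upper bound; in the paper it is used only in the \emph{lower} bound, specifically in Lemma \ref{lemma:quadtwistlowerbound2}, to bound the fibres of $S_3(X)\to S(X)$ by limiting how many twelfth powers can divide $\gcd(A^3,B^2)$. For the upper bound the boundary terms are controlled by the first two hypotheses alone. Since $m+1>n$ with $\gcd(m,n)=1$ and $m,n\neq 1$ forces $m>n$, the exponent $-m/n<-1$ makes the $d$-sum of the $a'$-side error convergent; and the identity $(m+1)/n-(w+1)=-1$ gives $m/n-w=-1/n$, so the $a'$-side error summed over $c$ up to its natural cutoff is $\lesim X^{(m+1)/6n}$, absorbed by the main term. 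The $b'$-side error is handled similarly. You should replace the appeal to the third hypothesis with this direct computation.

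A smaller presentational point: after summing over $d$ you write the $c$-truncation as $c\lesim X^{1/(6n)}d^{-1/n}$, which no longer makes sense once $d$ has been summed out. The paper simply sums over $c<X^\alpha$ for some fixed power $\alpha$ (coming from $\mathrm{rad}(c)\mid b$ and the bound on $b$), which is cleaner.
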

        
\begin{proof}
     Fix a $c>1$. Let $S_1(X;c)$ denote the set of all $(a,b,d) \in \Z^3$ such that:
        \begin{enumerate}
            \item $|ad^{m/n}| < X^{m/6n}c^{m/n}$,
            \item $|bd^{1/n}| < X^{1/6n}c^{1/n}$,
            \item $p|c \iff p|b, p^{w}|a$, and $\val_p(c) \le h$ for all $p$.
        \end{enumerate}
        
        Let $T(X;d,c) = \{(a,b) \mid (a,b,d) \in S_1(X;c) \}$. By standard analytic number theory and Theorem \ref{theorem:davenport}, it follows that
        $$|T(X;d,c)| = \frac{1}{c^{w+1}} X^{(m+1)/6n}d^{-(m+1)/n}c^{(m+1)/n} + {O}\left(\frac{1}{c^{w+1}} X^{m/6n}d^{-m/n}c^{m/n} \right).
        $$
 
        
        Thus we have,
        \begin{align*}
            |S_1(X;c)| &= \sum_{d< X^{1/6}} |T(X;d,c)|\\
            &= \sum_{d< X^{1/6}} c^{(m+1)/n - (w+1)} X^{(m+1)/6n}d^{-(m+1)/n}  + O\left(\sum_{d<X^{1/6}} \frac{1}{c^{w+1}} X^{m/6n}d^{-m/n}c^{m/n} \right).
        \end{align*}
        
        We will only consider the case $m+1>n$. Further, since $m \neq n$, the error term above just becomes
        $$
        O\left( \frac{1}{c^{w+1}} X^{m/6n}c^{m/n} \right).
        $$
        
        Therefore we have,
        $$
        |S_1(X;c)| \lesim c^{(m+1)/n - (w+1)} X^{(m+1)/6n}.
        $$
        
        Summing over $h+1$-th power-free $c$, with $c < X^{\alpha}$ (for any $\alpha$), since $\frac{m+1}{n} - (w+1) = -1$, we have
        $$
        |S_1(X)| \lesim X^{(m+1)/6n}\log(X).
        $$
        
\end{proof}

\textbf{Lower Bound.} The outline of the proof of the lower bound is as follows: we know that if $(u,t) \in S_1(X)$, then $u$ and $t$ have expressions as in Lemma \ref{lemma:quadtwistlemma2}. Instead of counting all of these, we only count ones of the form $u= c^{-1}b^n$ and $t=ab^{-m}$, where $a, b$ and $c$ are within appropriate bounds. Let $S_2(X)$ be the set of such triples $(a,b,c)$. There is a map $S_2(X) \rightarrow S(X)$, and the bulk of the proof is in showing that this map has bounded fibers. We first form another intermediary set, which we call $S_3(X)$. We then describe maps $S_2(X) \rightarrow S_3(X) \rightarrow S(X)$, and bound the fibers of these maps. This will enable us to find a lower bound for $S(X)$ by finding one for $S_2(X)$ instead.  \hfill $\square$ \\
 
 Since we only need a lower bound, observe that by changing $u$ to $Mu$ for large enough $M$, we can assume that $f(t), g(t) \in \Z[t]$. For a triple $(a,b,c) \in \Z^{3}$, set $u = c^{-1}b^{n}$ and $t = ab^{-m}$. Let $A = u^2f(t)$ and $B = u^3g(t)$. Fix some constant $\kappa >0$. Define $S_2(X)$ to be: the set of triples $(a,b,c) \in \Z^3$ such that:
            \begin{itemize}
                \item $c = \ds \prod_{p|b, p^w|a} p^h$,
                \item $0<b< \kappa X^{1/6n}c^{1/n}$, $|a| < \kappa X^{m/6n}c^{m/n}$, $\gcd(a,b^m)$ is $m$-th power free 
                \item $4A^3 + 27B^2 \neq 0$ (where $A$ and $B$ are as defined above). 
            \end{itemize}
        
           Note that if $(a,b) \in S_2(X)$, then for a suitable value of $\kappa$, we get $(A,B) \in S(X)$, since $|A| = |u^2f(t)| \lesim |c^{-2} a^r b^{2n - mr}| \lesim X^{1/3}$, and similarly for $B$.\\

     \textbf{Notation: } Define $S_3(X) \subset \Z^2$ to be the set of $(A,B) \in \Z^2$ coming from $S_2(X).$ We then have a map from $S_3(X) \rightarrow S(X)$ sending $(A,B) \mapsto (A/d^4, B/d^6)$ where $d^{12} \mid\mid \gcd(A^3, B^2)$. Stratify $S_2(X)$ by sets $S_2(X;c)$ of pairs $(a,b)$ such that $\prod_{p|b, p^w|a} p^h = c$. Define $S_3(X;c)$ as the pairs $(A, B)$ coming from $(a,b) \in S_2(X;c)$.\\
        
    The following lemma will help us bound the fibers of the map $S_3(X) \rightarrow S(X)$. 
        
    \begin{lemma}
    \label{lemma:quadtwistlowerbound1}
        There exists a non-zero integer $D$ (depending only on $f$ and $g$) with the following property: if $(a,b,c) \in S_2(X)$, then $\gcd(A^3, B^2)$ can be factored as $(M_D) \beta$ such that $M_D$ divides $D$ and $p|\beta \implies p|b$.
    \end{lemma}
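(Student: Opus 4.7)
My plan is to exploit the coprimality of $f$ and $g$ via a Bezout identity and to track $p$-adic valuations carefully. Since $\gcd(f,g) = 1$ in $\Q[t]$, Bezout in the PID $\Q[t]$ followed by clearing of denominators yields polynomials $\tilde{P}, \tilde{Q} \in \Z[t]$ and a non-zero integer $D_0$ (which may be taken to be an integer multiple of the resultant $\mathrm{Res}(f,g)$) such that
\[
\tilde{P}(t)\,f(t) + \tilde{Q}(t)\,g(t) = D_0
\]
as an identity in $\Z[t]$.

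Substituting $t = a/b^m$ and clearing denominators by multiplying through by $b^{md}$, where $d := \max(\deg \tilde{P} + r,\ \deg \tilde{Q} + s)$, I convert this into a polynomial identity in $\Z[a,b]$ of the form
\[
b^{m e_1}\, P(a,b)\, \tilde{f}(a,b) \;+\; b^{m e_2}\, Q(a,b)\, \tilde{g}(a,b) \;=\; D_0\, b^{md},
\]
where $\tilde{f}(a,b) := b^{mr} f(a/b^m)$ and $\tilde{g}(a,b) := b^{ms} g(a/b^m)$ lie in $\Z[a,b]$, the objects $P(a,b), Q(a,b) \in \Z[a,b]$ are the analogous re-homogenizations of $\tilde{P}$ and $\tilde{Q}$, and $e_1, e_2 \geq 0$.

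Next, I read this identity prime-by-prime. Fix any prime $p \nmid b$. The right-hand side has $p$-adic valuation exactly $\val_p(D_0)$, and the ultrametric inequality applied to the left-hand side---using $\val_p(P(a,b)),\val_p(Q(a,b)) \geq 0$ and $\val_p(b) = 0$---forces
\[
\min\bigl(\val_p \tilde{f}(a,b),\ \val_p \tilde{g}(a,b)\bigr) \;\leq\; \val_p(D_0).
\]
The defining condition of $c$ in $S_2(X)$ guarantees that $p \mid c$ only if $p \mid b$, so for $p \nmid b$ we have $\val_p(c) = 0$; hence $\val_p(A) = \val_p \tilde{f}(a,b)$ and $\val_p(B) = \val_p \tilde{g}(a,b)$. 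Combining these observations gives
\[
\val_p \gcd(A^3, B^2) \;=\; \min(3\val_p A,\ 2\val_p B) \;\leq\; 3 \min(\val_p A,\, \val_p B) \;\leq\; 3 \val_p(D_0).
\]

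Finally, taking the product over all primes $p \nmid b$, the part of $\gcd(A^3,B^2)$ coprime to $b$ must divide $D_0^3$. Setting $D := D_0^3$ yields the desired factorization $\gcd(A^3,B^2) = M_D \beta$ with $M_D \mid D$ and $\beta$ supported only on primes dividing $b$. The only step requiring genuine care is the homogenization bookkeeping passing from Bezout in $\Q[t]$ to the identity in $\Z[a,b]$; the valuation estimate itself is an immediate one-line consequence of the ultrametric inequality.
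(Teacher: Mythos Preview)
Your proof is correct. Both your argument and the paper's hinge on the same idea: coprimality of $f$ and $g$ bounds $\min(\val_p f(t),\val_p g(t))$ uniformly, and for primes $p\nmid b$ one has $\val_p(u)=0$, so this controls $\val_p\gcd(A^3,B^2)$. The paper packages the coprimality bound as Lemma~\ref{lemma:countinglemma1} (proved in \cite{HS17} via the resultant, i.e., exactly your Bezout content) and then splits into primes $p\leq N_0$ and $p>N_0$, obtaining an $N_0$-smooth $D$; you instead write out the Bezout identity explicitly, homogenize in $(a,b)$, and read off $D=D_0^3$ directly, which is more self-contained. The paper's proof additionally records explicit formulas for $\val_p(A^3)$ and $\val_p(B^2)$ when $p\mid b$; these are not actually needed for this lemma but set up the bound in the subsequent Lemma~\ref{lemma:quadtwistlowerbound2}. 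One small omission worth making explicit: your step ``hence $\val_p(A)=\val_p\tilde f(a,b)$'' relies on the identity $A=c^{-2}b^{2n-mr}\tilde f(a,b)$ together with $2n\geq mr$ (which follows from $n/m\geq r/2$).
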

    
     \begin{proof}
         We follow the same method of proof as in Harron and Snowden. Let $(a,b) \in S_2(X;c)$ and let $p$ be a prime. Let $M_1$ be a constant such that $|3\val_p(f(t)) - 3r\val_p(t)|< M_1$ and $|2\val_p(g(t)) - 2s\val_p(t)|< M_1$ for all $t \in \Q$ with $\val_p(t)<0$. Let $M_2$ be the constant for which $\min \{3\val_p(f(t)), 2\val_p(g(t))\} \le M_2 $ for all $t \in \Q$ with $\val_p(t) \ge 0$. Note that $\max\{ M_1, M_2\}$ is 0 for $p\gg0$ (specifically, $p \ge N_0$, as defined in Lemma \ref{lemma:quadtwistlemma3}). 
         
         
         Now, consider the case where $\val_p(t) <0$. In particular, $p|b$. Let $\val_p(b) = k$ and let $\val_p(a) = l (<m)$. We then have:
        \begin{align*}
        \val_p(A^3) = \begin{cases}6mk\left( \frac{n}{m} - \frac{r}{2}\right) + 3rl - 6h + \epsilon & p|c \\ 6mk\left( \frac{n}{m} - \frac{r}{2}\right) + 3rl + \epsilon & p \nmid c  \end{cases}\\
        \val_p(B^2) = \begin{cases}6mk \left( \frac{n}{m} - \frac{s}{3} \right) + 2sl - 6h+ \delta &p|c \\ 6mk \left( \frac{n}{m} - \frac{s}{3} \right) + 2sl + \delta & p \nmid c \end{cases}
        \end{align*}
        
        where $|\epsilon| < M_1$ and $|\delta|< M_1$. Let $M_0 = \min \{3rm, 2sm\} $.
        Let 
        $$
        e_p = \begin{cases} \max\{ M_1 + M_0, M_2\} & p \le N_0 \\ M_0 & p \ge N_0 \end{cases}
        $$
        and take $D = \prod_{p \le N_0} p^{e_p}$. This proves the lemma.
        \end{proof}
        
          \begin{remark}
             We find that $D$ is $N_0$-smooth and $\beta$ consists of $p|b$ for $p \ge N_0$. It is crucial that $D, M_1, M_2$ and $M_0$ do not depend on $(a,b,c)$ in any way. They only depend on $f$ and $g$.
        \end{remark}

     We now use this lemma to to bound the fibers of $S_3(X) \rightarrow S(X)$ in our case of interest, namely when:
    $$
    \min \{3rm - 6h, 2sm - 6h \} \le 6.
    $$
    We will call this assumption $(**)$.
        
      \begin{lemma}
    \label{lemma:quadtwistlowerbound2}
        There exists a constant $N$ such that the size of the fibers of $S_3(X) \rightarrow S(X)$ is bounded by $N$.
    \end{lemma}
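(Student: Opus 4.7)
The plan is to bound the number of positive integers $d$ for which $(d^4 A_0, d^6 B_0) \in S_3(X)$, given a fixed $(A_0, B_0) \in S(X)$. Every fiber element has this form: if $(A, B) \in S_3(X)$ maps to $(A_0, B_0)$, then $A = d^4 A_0$ and $B = d^6 B_0$, with $d^{12}$ exactly dividing $\gcd(A^3, B^2)$ because $\gcd(A_0^3, B_0^2)$ is twelfth-power-free by the definition of $S(X)$.

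Each such $(A, B)$ arises from some triple $(a, b, c) \in S_2(X)$, so Lemma \ref{lemma:quadtwistlowerbound1} applies: $\gcd(A^3, B^2) = M_D \beta$ with $M_D \mid D$ and every prime divisor of $\beta$ dividing $b$. Consequently $d^{12} \mid D\beta$, and it suffices to bound $v_p(d)$ prime-by-prime. For $p \mid D$ (a finite set of small primes), $v_p(d)$ is bounded by a constant depending only on $f$ and $g$, absorbing the error terms $\epsilon, \delta$ of Lemma \ref{lemma:quadtwistlowerbound1} at each such $p$. For $p \nmid D$ (so $p \ge N_0$ and the error terms vanish), the task reduces to showing $v_p(\gcd(A^3, B^2)) < 12$, which forces $v_p(d) = 0$.

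For $p \ge N_0$ with $p \mid b$, the formulas of Lemma \ref{lemma:quadtwistlowerbound1} simplify to $v_p(A^3) = (6n - 3rm)\,v_p(b) + 3r\,v_p(a)$ and $v_p(B^2) = (6n - 2sm)\,v_p(b) + 2s\,v_p(a)$, with an additional $-6h$ subtracted whenever $p \mid c$. Since $n/m = \max\{r/2, s/3\}$, at least one of $6n - 3rm$ or $6n - 2sm$ vanishes; without loss of generality assume $6n - 3rm = 0$ (Case A; Case B is symmetric with the roles of $A^3$ and $B^2$ interchanged). Then $v_p(A^3)$ is independent of $v_p(b)$, and the constraints $v_p(a) < m$ (from the $m$-th-power-free condition on $\gcd(a, b^m)$), $v_p(a) \ge w$ when $p \mid c$, and $v_p(a) < w$ when $p \nmid c$ yield
\[
v_p(A^3) \;\le\; \max\bigl\{\, 3r(m-1) - 6h,\; 3r(w-1) \,\bigr\}.
\]
The hypothesis $(**)$ combined with the numerical relations $(m+1)/n = w$ and $m + 1 > n$ from Theorem \ref{theorem:MainTheorem2Twists} makes both entries of this maximum strictly less than $12$. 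Since $\min(v_p(A^3), v_p(B^2)) \le v_p(A^3)$, we conclude $v_p(\gcd(A^3, B^2)) < 12$, hence $v_p(d) = 0$.

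Combining both ranges of primes, $d$ divides a fixed integer depending only on $f$ and $g$; thus $d$ is uniformly bounded and the fiber has at most some absolute constant $N$ elements. The main obstacle is the numerical verification across all four sub-cases (Case A vs.\ Case B, $p \mid c$ vs.\ $p \nmid c$): the interplay between the identities $3rm = 6n$ or $2sm = 6n$, the bound $\min\{3rm - 6h, 2sm - 6h\} \le 6$, the equality $(m+1)/n = w$, and the constraint $v_p(a) < m$ conspires to keep each bound strictly below $12$, but only just, which is precisely why $(**)$ is imposed.
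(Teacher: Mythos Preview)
Your proof is correct and follows essentially the same approach as the paper: both reduce to showing $p^{12}\nmid\gcd(A^3,B^2)$ for primes $p\ge N_0$ by bounding the relevant valuation via the formulas of Lemma~\ref{lemma:quadtwistlowerbound1} together with the constraints $v_p(a)<m$ and $v_p(a)\gtrless w$ according as $p\mid c$ or not. The paper is terser---it invokes the specific value $h=1$ directly rather than your Case~A/B split and the bound $\max\{3r(m-1)-6h,\,3r(w-1)\}$---but the substance is the same, and your hand-waved ``conspires'' can indeed be verified (e.g.\ the hypotheses force $h=n-1$, from which both entries are seen to be at most~$6$).
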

    
    \begin{proof}
        The fiber over a point $(A^{'},B^{'}) \in S(X)$ is in bijection 
        with the set $\{d \in \Z \mid (d^4A^{'}, d^6B^{'}) \in S_3(X) \}$. Thus for any $(A,B) \in S_3(X)$, the size of the fiber above the pair is bounded above by the number of 12th powers dividing $\gcd(|A|^3, B^2)$. We show that this is exactly the number of 12th powers dividing $D$ from Lemma \ref{lemma:quadtwistlowerbound1}, i.e. no 12th powers divide $\beta$.\\
        
        Consider a prime $p \ge N_0$. We claim that $p^{12}$ cannot divide $\gcd(|A|^3, B^2)$. If $p|b$ and $p|c$, then this follows from assumption $(**)$. If $p\nmid b$, then since $K_2 =0$, $p$ doesn't divide $\gcd(|A|^3, B^2 )$. If $p|b$ and $p \nmid c$, then by definition of $c$, we must have that $p^w \nmid a$. Since $h=1$, this forces $l \le 1$ in Lemma \ref{lemma:quadtwistlowerbound1}. Since assumption $(**)$ implies $\min \{3r, 2s \} < 12$, we are done.
    \end{proof}
        
     The rest of the proof follows by the exact argument as that in Harron and Snowden, which we recall below.
  
  \begin{lemma}
        \label{lemma:quadtwistlowerbound3}
        There exists a constant $M$ such that every fiber of the map $S_2(X) \rightarrow S_3(X)$ has size bounded by $M$.
  \end{lemma}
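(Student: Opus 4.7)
The plan is to follow the approach of \cite{HS17} and factor the map $S_2(X) \to S_3(X)$ through the intermediate pair $(u, t) = (c^{-1} b^n, a b^{-m})$, then bound preimages stage by stage. First, since $c = \prod_{p \mid b,\, p^w \mid a} p^h$ is a function of $(a, b)$ alone, it suffices to bound the number of pairs $(a, b)$ with $b > 0$ lying in $S_2(X)$ which map to a given $(A, B) \in S_3(X)$.

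Next, I would argue that such a pair $(a, b)$ is uniquely determined by the rational $t = a b^{-m}$. Writing $t = P/Q$ in lowest terms with $Q > 0$, the $m$-th power free condition on $\gcd(a, b^m)$ forces $b$ to be the smallest positive integer with $Q \mid b^m$, after which $a = P b^m/Q$ is determined; any larger choice $b = k b_0$ would multiply $\gcd(a, b^m)$ by the $m$-th power $k^m$, contradicting the normalization. Thus it suffices to bound the number of $t$'s for which there exists $u \in \Q$ satisfying $u^2 f(t) = A$ and $u^3 g(t) = B$. When $A, B \ne 0$, both $f(t)$ and $g(t)$ are nonzero and $t$ is a root of
\[
A^3 g(t)^2 - B^2 f(t)^3 = 0,
\]
a polynomial in $t$ of degree at most $\max(3r, 2s)$. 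This polynomial is not identically zero: $f^3/g^2$ is a non-constant rational function, since $\max\{r, s\} > 0$ and $\gcd(f, g) = 1$. Hence $t$ takes at most $\max(3r, 2s)$ values, and given $t$ the scalar $u = B f(t)/(A g(t))$ is forced, determining $(a, b, c)$. The edge cases $A = 0$ or $B = 0$ force $t$ to be a root of $f$ or $g$ respectively (here $u \ne 0$, since $4A^3 + 27B^2 \ne 0$ rules out $(A, B) = (0, 0)$), with $u$ then uniquely pinned down by the remaining non-vanishing relation.

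The main subtleties, such as they are, are verifying that the $m$-th power free normalization truly pins $(a, b)$ down from $t$ and carefully handling the degenerate cases $A = 0$ or $B = 0$; the core of the argument is simply a degree count on the rational function $f(t)^3/g(t)^2$, yielding a constant $M$ (depending only on $f$ and $g$, and essentially equal to $\max(3r, 2s)$) bounding every fiber.
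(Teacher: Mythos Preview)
Your proof is correct and follows the same overall shape as the paper's: reduce to bounding the number of pairs $(u,t)=(c^{-1}b^n,\,ab^{-m})$, then argue that $(a,b,c)$ is recovered from $(u,t)$ via the $m$-th-power-free normalization and the formula for $c$. Where you diverge is in the final step. The paper sets $x=u^{-1}$, $y=t$ and views $Ax^2=f(y)$, $Bx^3=g(y)$ as two plane curves meeting transversally (since $f,g$ are coprime), invoking B\'ezout to bound the solutions by $M=\max(2,r)\max(3,s)$. You instead eliminate $u$ directly, obtaining the single polynomial relation $A^3 g(t)^2 - B^2 f(t)^3=0$ of degree at most $\max(3r,2s)$, and handle $A=0$ or $B=0$ by hand. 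Your route is slightly more elementary (no B\'ezout), and you are more explicit than the paper about why the map $(a,b,c)\mapsto t$ is injective on $S_2(X)$; the paper's version is terser and yields a somewhat different (but comparable) constant.
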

  
  \begin{proof}
      Fix any $(A,B) \in S_3(X)$. An element in fiber of the map $S_2(X) \rightarrow S_3(X)$ above $(A,B)$ is of the form $(a,b,c) \in \Z^3$ with $A=(c^{-1}b^{n})^2 f(ab^{-m})$ and $B = (c^{-1}b^{n})^3 g(ab^{-m})$, and $c = \prod_{p|b, p^w|a}p^h$. Set $x = cb^{-n}$ and $y = ab^{-m}$. Then an element $(a,b,c)$ in the fiber satisfies the equations: 
      $$
      Ax^2 = f(y) \qquad Bx^3 = g(y).
      $$
      These can be thought of as defining curves in $\P^2$, that intersect transversally, since $f$ and $g$ are coprime. Thus by Bezout's theorem, the maximum number of solutions is bounded above by: $M = \max(2,r) \max(3,s)$. 
  \end{proof}
  
        It only remains to bound the size of $S_2(X)$. Now, $S_2(X) = \coprod_{c} S_2(X;c)$ and the size of $S_2(X;c)$ is precisely:
            $$
            \frac{1}{c^{w+1}} c^{(m+1)/n} X^{(m+1)/6n} + O \left( \frac{1}{c^{w+1}} c^{m/n} X^{m/6n} \right),
            $$
            
            where the error term comes from \ref{theorem:davenport}.
            Summing over $c$ gives us: $X^{(m+1)/6n}\log(X) \lesim S_2(X).$ 
            


 
\section{Proof of Theorem \ref{theorem:introduction:maintheorem} for $N\ne 2, 5$}
\label{section:proofofmaintheorem}
The proof of Theorem \ref{theorem:introduction:maintheorem} for the cases $N\ne2, 5$ involves applying the appropriate theorems from \S 3. We start off with a list of the modular curves of genus zero that we consider, and their geometric descriptions. Note that when we say that a curve has $n$ \emph{stacky points}, we are talking about $n$ stacky geometric points (\cite[\href{https://stacks.math.columbia.edu/tag/04XE}{Tag 04XE}]{stacks-project} ). For modular curves, this can be thought of as referring to $n$ distinct values of the corresponding hauptmoduln (Definition \ref{defn:hauptmoduln}) or cusps. Recall that we use the term `stacky curve' as defined in \cite{VZB15} to mean curves that have a trivial generic inertia stack.
 
 
 \begin{proposition}
 \label{proposition:geometryofX1/2N}
    For any $N \in \Z_{>0}$, consider the curve $\cX_{1/2}(N)$ constructed in \S \ref{section:X_1/2}. Then:
    \begin{enumerate}
        \item If $N = 3$, then $\cX_{1/2}(N) = \cX_1(3)$, which is a stacky curve with (geometrically) one stacky point corresponding to the elliptic curves with $j$-invariant 0.
        \item If $N=4$, then $\cX_{1/2}(N) = \cX_1(4)$, which is a stacky curve whose only stacky point is at the irregular cusp.
        \item If $N=7$, then $\cX_{1/2}(N)$ is a stacky curve with two stacky points whose hauptmoduln are defined over $K = \Q(\sqrt{-3})$ and are conjugate over $\Q$.
        \item If $N= 6,8,9,12,16,18$, then $\cX_{1/2}(N)$ is a scheme.
        \item If $N=5,10,13,25$, then $\cX_{1/2}(N)$ has generic inertia stack $B\mu_2$.
    \end{enumerate}
 \end{proposition}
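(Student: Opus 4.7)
The plan is to identify $\cX_{1/2}(N) = [\cX_1(N)/H]$ and analyze its stacky structure via (i) its generic inertia, determined by whether $-1 \in H$, and (ii) its isolated stacky points, which arise either from stacky points of $\cX_1(N)$ or from non-trivial stabilizers of the $H$-action on $\cX_1(N)$. The key observation underlying (i) is that $-1 \in (\Z/N\Z)^{\times}$ acts on $\cX_1(N)$ trivially up to a 2-isomorphism (witnessed by the elliptic involution $[-1] : E \to E$, which exhibits $(E,P) \cong (E,-P)$), so the quotient gains a $B\mu_2$ factor in its generic inertia precisely when $-1 \in H$, and has trivial generic inertia otherwise.

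Parts (1), (2), and (5) follow from this observation combined with standard facts. When $(\Z/N\Z)^{\times} = \{\pm 1\}$ (i.e.\ $N=3,4$), $H=\{1\}$ forces $\cX_{1/2}(N) = \cX_1(N)$: for $\cX_1(3)$ the unique stacky point lies over $j=0$, where the 3-torsion point $(0,\sqrt{B})$ on $y^2 = x^3+B$ is fixed by $\zeta_3 \in \mu_6 = \Aut(E)$, giving a $\mu_3$ stabilizer; and for $\cX_1(4)$ the unique stacky point sits at the irregular cusp, giving a $\mu_2$ stabilizer via the automorphism of the Néron polygon (both facts are classical and extracted from \cite{DR73} or \cite{KM85}). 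For (5), direct inspection of the cyclic groups $(\Z/N\Z)^{\times}$ for $N \in \{5,10,13,25\}$ shows that every index 2 subgroup contains $-1$, yielding the $B\mu_2$ generic inertia.

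Parts (3) and (4) treat $N \ge 5$ with $-1 \notin H$: here $\cX_1(N)$ is a scheme, so stacky points of $\cX_{1/2}(N)$ come exactly from fixed points of the $H$-action, i.e.\ from pairs $(E,P)$ with $\phi(P) = aP$ for some $\phi \in \Aut(E)$ and $a \in H \setminus \{1\}$. Away from $j=0,1728$, $\Aut(E) = \{\pm 1\}$ forces $a = \pm 1$, ruled out by $-1 \notin H$; only CM points can contribute. For $N=7$ with $H = \{1,2,4\}$: at $j=0$, the prime $7 = (2+\zeta_3)(2+\overline{\zeta_3})$ splits in $\Z[\zeta_3]$, so $E[7] \cong \F_7 \oplus \F_7$ as a $\Z[\zeta_3]$-module with $\zeta_3$ acting by $2$ on one factor and $4$ on the other (both in $H$), producing two Galois-conjugate fixed points of $\mu_3$-stabilizer defined over the splitting field $\Q(\sqrt{-3})$; and at $j=1728$, $7$ is inert in $\Z[i]$, so the $\mu_4$-action on $E[7] = \F_{49}$ is by a non-real element of $\F_{49}^{\times}$ and cannot equal multiplication by any $a \in (\Z/7\Z)^{\times}$. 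For (4), an analogous prime-by-prime analysis at $j=0$ and $j=1728$ for each $N \in \{6,8,9,12,16,18\}$ shows no fixed points on the interior and no fixed cusps, so $\cX_{1/2}(N)$ is a scheme.

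The main obstacle is the CM case-check in (4), especially at the ramified primes $p=2$ in $\Z[i]$ and $p=3$ in $\Z[\zeta_3]$, where $E[p^k]$ is not semisimple as an $\Aut(E)$-module and one must verify injectivity of $\phi - a$ on the appropriate local torsion module for each relevant $(\phi, a)$. A unified bookkeeping via the Chinese Remainder decomposition $(\Z/N\Z)^{\times} \cong \prod_{p \mid N} (\Z/p^{v_p(N)}\Z)^{\times}$ streamlines this by reducing to the prime-power case and isolating the difficulty to the ramified-prime factors.
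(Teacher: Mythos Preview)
Your proposal is correct and follows the same overall strategy as the paper: reduce the stacky structure of $\cX_{1/2}(N)$ to an automorphism analysis of its points, splitting into the generic locus (governed by whether $-1 \in H$) and the special points over $j=0,1728$ and the cusps. The paper frames this as analysing automorphisms of points in the fibres of $\cX_{1/2}(N) \to \cX_0(N)$ and invokes Proposition~\ref{prop:rigidandrep}, while you phrase it as fixed points of the $H$-action on $\cX_1(N)$; these are equivalent viewpoints.

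The one place where the executions genuinely diverge is the $N=7$ case. The paper locates the two stacky points by writing down the universal family over $\cY_1(7)$, computing the $j$-invariant as a rational function of the parameter $v$, and checking by explicit computation with the torsion point that the stacky locus is cut out by $v^2 - v + 1$, whose roots lie in $\Q(\sqrt{-3})$. Your argument is instead arithmetic: since $7$ splits in $\Z[\zeta_3]$, the $\Z[\zeta_3]$-module $E[7]$ for the $j=0$ curve decomposes as two $\F_7$-lines on which $\zeta_3$ acts by the two cube roots of unity $2,4 \in H$, yielding two Galois-conjugate $\mu_3$-stacky points over the splitting field $\Q(\sqrt{-3})$; inertness of $7$ in $\Z[i]$ rules out $j=1728$. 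This is cleaner and explains \emph{why} $\Q(\sqrt{-3})$ appears, whereas the paper's computation merely observes it. Your approach also makes the case-check in part~(4) more uniform, reducing it to norm computations of $\phi - a$ in the relevant imaginary quadratic order; the paper leaves most of~(4) to the reader after a brief treatment of $N=8$.

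One small slip: you write $7 = (2+\zeta_3)(2+\overline{\zeta_3})$, but $(2+\zeta_3)(2+\overline{\zeta_3}) = 4 - 2 + 1 = 3$; the correct factorisation is $7 = (3+\zeta_3)(3+\overline{\zeta_3})$. This does not affect the argument, which only uses that $7$ splits.
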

 \begin{proof}
    This follows from the construction of $\cX_{1/2}(N)$, by analysing the automorphisms of its points and applying Proposition \ref{prop:rigidandrep}. For $N=3,4$ and $6$, this is classical, as in each of these cases $\cX_{1/2}(N) = \cX_1(N)$. We demonstrate the cases $N=5$, $7$ and $8$, and leave the rest to the reader.\\
    
    Consider the map $\cX_{1/2}(N) \rightarrow \cX_0(N).$ Since any point in $\cX_{1/2}(N)$ lies in some geometric fiber of this map, it is enough to analyse automorphisms of points in each fiber. For any point $(E,C) \in \cX_0(N)$, choose an isomorphism $C \cong \Z/N\Z$ and thus $\Aut(C) \cong (\Z/N\Z)^{\times}$. Let $P$ be a generator for $C$.\\
    
    For $N=7$, the fiber above a point $(E,C)$ contains the points $(E, \{P,2P,4P \})$ and $(E, \{-P,-2P, -4P \})$. If $E$ has non-zero $j$-invariant, then the only extra automorphism of the pair $(E,C)$ is $[-1]$ and thus the points in the fiber do not have any extra automorphisms. Recall that $\cX_0(7)$ has exactly two elliptic points, both with $j$-invariant 0. In order to find these two points, we first note that the universal family over $\cY_1(7)$ is
    \[
        y^2+(1+v-v^2)xy+(v^2-v^3)=x^3+(v^2-v^3)x^2,
    \]
    with torsion point $(0,0)$ \cite[Table 3]{Kub76}. The $j$-invariant of the universal family is
    \[
        \frac{(v^6-11v^5+30v^4-15v^3-10v^2+5v+1)^3(v^2-v+1)^3}{(v-1)^7v^7(v^3-8v^2+5v+1)}.
    \]
    This gives exactly eight values of $v$ producing a curve of $j$-invariant 0. An explicit computation with the torsion point confirms that the stacky points correspond to the roots of $v^2-v+1$, which are defined over $\Q(\sqrt{-3})$.\\
    

    If $N=8$, then recall from \S 2 that the choice of index 2 subgroup of $(\Z/N\Z)^{\times}$ is not unique, and we choose one that works for us. That is, write $(\Z/8\Z)^{\times} =\{P,3P,-3P, P\}$ and say we chose the subgroup $\{P,3P\}$, so that fiber above $(E,C)$ consists of the points $(E,\{P,3P\})$ and $(E, \{-P,-3P\})$. Neither pair has extra automorphisms. 
    If $N=5$. Then $C = \{P,2P,-2P,-1P\}$, which has a unique index 2 subgroup: $\{P,-P\}$. Thus the fiber above $(E,C)$ has two points: $(E, \{P,-P\})$ and  $(E, \{2P, -2P\})$. Each of these points still has the automorphism $[-1]$. This proves the theorem for $N=5$.
 \end{proof}
    


    What this proposition tells us is that if $N \in \{3,4,6,7,8,9,12,16,18\},$ then there is an open sub-stack $\mathcal{U}$ of $\cX_{1/2}(N)$ that is isomorphic to a scheme. Therefore $\mathcal{U}(\Q)$ can be parametrized via the universal family over $\mathcal{U}$. For $N \in \{4,6,8,9,12,16,18\}$, the non-stacky locus contains $\cY_{1/2}(N)$, and thus there exist $f_N$ and $g_N \in \Q[t]$ coprime such that every elliptic curve arising from a rational point on $\cY_{1/2}(N)$ is isomorphic to one of the form:
    $$
     \cE_{N,t} : y^2 = x^3 + f_N(t)x + g_N(t).
    $$
    Thus, by Proposition \ref{prop:MainPropositionXHalfN}, we have the following:
    \begin{align*}
          \cN(N,X) &= \#\{E/ \Q \mid \Ht(E) < X, \text{ and } \exists d \in \Z, u, t \in \Q, \text{s.t. } E_d: y^2 = x^3 + u^4f_N(t)x + u^6g_N(t)\}\\
          &= \#\{E/ \Q \mid \Ht(E) < X, \text{ and } \exists u, t \in \Q, \text{s.t. } E: y^2 = x^3 + u^2f_N(t)x + u^3g_N(t)  \}.
    \end{align*}
  
    To find the asymptotic growth for $\cN(N,X)$ in these cases, we use Proposition \ref{prop:HarronSnowdenTwistTheorem} to find the value of $h_N(X)$, given in Table \ref{table:maintheoremproof} below.\\
    
    For $N=3$, the situation is slightly different. $\cX_{1/2}(3) = \cX_1(3)$ has one stacky point lying above the elliptic curve with $j$-invariant 0. Let $\Phi_3 : \cX_{1/2}(3) \rightarrow \cX(1)$ be the usual forgetful map. Set $Y = \cY_{1/2}(3) \setminus \phi_3^{-1}(\{ j=0\})$. Then, for a suitable embedding of $Y \hookrightarrow \mathbb{A}^1$, there is a universal family $\cE_{3,t}$ over $Y$ (e.g. see \cite{HS17}) given by:
    $$
    \cE_{3,t} : y^2 = x^3 + \left(2t - \frac{1}{3} \right)x + \left(t^2 - \frac{2}{3}t + \frac{2}{27} \right).
    $$
    
    Every elliptic curve with non zero $j$-invariant and a rational 3-torsion point is isomorphic to one of the above form for some $t \in \Q$. However, this family does not extend to a universal family over $t = 1/6$. Indeed $\cE_{3, 1/6}$ is given by $y^2 = x^3 - \frac{1}{108}$ and its torsion subgroup of order 3 is generated by the rational point: $(1/3, 1/6)$. 
    On the other hand, all curves $E^D : y^2 = x^3 + D^2$, $D \in \Q$ contain the rational 3 torsion point $(0,D)$ and have $j$-invariant $0$, but none of them is isomorphic to $\cE_{3,1/6}$ over $\Q$. For this reason, we separate our counting function into two pieces:
    $$
    \cN(3,X) = \cN(3,X)_{j=0} + \cN(3,X)_{j \neq 0}.
    $$
    
    By Theorem \ref{theorem:MainTheorem2Twists}, we have the following proposition:
    \begin{proposition}
        Maintaining the notation as above, 
        $$
        \cN(3,X)_{j \neq 0} \asymp X^{1/3}\log(X)
        $$
    \end{proposition}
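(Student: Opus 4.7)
The plan is to reduce $\cN(3,X)_{j\ne 0}$ to an application of Theorem~\ref{theorem:MainTheorem2Twists} with $f(t) = f_3(t) = 2t - \tfrac{1}{3}$ and $g(t) = g_3(t) = t^2 - \tfrac{2}{3}t + \tfrac{2}{27}$ coming from the universal family $\cE_{3,t}$ displayed above. A direct check shows $f_3$ and $g_3$ are coprime in $\Q[t]$ (for instance $g_3(1/6) = -1/108 \ne 0$), with degrees $r = 1$ and $s = 2$.

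First I would set up the dictionary. By Proposition~\ref{prop:MainPropositionXHalfN} applied with $N = 3$ (where $\cX_{1/2}(3) = \cX_1(3)$), every elliptic curve $E/\Q$ with a rational $3$-isogeny admits a unique squarefree quadratic twist $E^{\chi_1}$ carrying a rational point of order $3$. Since $j$-invariants are twist-invariant, this bijectively identifies the set of curves with rational $3$-isogeny and $j \ne 0$ with pairs consisting of a curve of $j \ne 0$ having rational $3$-torsion together with a squarefree twist parameter. Curves with rational $3$-torsion and $j \ne 0$ are parametrized, up to $\Q$-iso\-mor\-phism, by $t \in \Q \setminus \{1/6\}$ via $\cE_{3,t}$; absorbing the twist parameter together with the Weierstrass rescaling freedom into a single $u \in \Q^\times$, one sees that $\cN(3,X)_{j\ne 0}$ is commensurable with the cardinality of the set $S(X)$ from Theorem~\ref{theorem:MainTheorem2Twists} attached to $(f_3, g_3)$.

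Next I would verify the numerical hypotheses. With $r = 1,\ s = 2$ one computes $\max\{r/2, s/3\} = 2/3$, giving coprime $(n, m) = (2, 3)$; neither equals $1$, so we must use the generalized Theorem~\ref{theorem:MainTheorem2Twists} rather than Proposition~\ref{prop:HarronSnowdenTwistTheorem}. Then $h = \lfloor n(m-1)/m \rfloor = \lfloor 4/3 \rfloor = 1$ and $w = \max\{3h/s, 2h/r\} = \max\{3/2, 2\} = 2$. The three required conditions all hold: $m + 1 = 4 > 2 = n$; $(m+1)/n - (w+1) = 2 - 3 = -1$; and $\min\{3rm - 6h,\ 2sm - 6h\} = \min\{3, 6\} = 3 \le 6$. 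Invoking Theorem~\ref{theorem:MainTheorem2Twists} then gives $|S(X)| \asymp X^{(m+1)/6n}\log X = X^{1/3}\log X$.

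The main obstacle lies in the reduction step rather than the analytic count: one must justify that the map $(u,t) \mapsto (u^2 f_3(t),\, u^3 g_3(t))$ from parameter pairs to minimal integral Weierstrass models has fibers of bounded size (so that counting $S(X)$ genuinely counts elliptic curves up to $\Q$-isomorphism), and that every isomorphism class of interest does arise this way. Both are handled inside the proof of Theorem~\ref{theorem:MainTheorem2Twists} (compare Lemmas~\ref{lemma:quadtwistlowerbound2} and \ref{lemma:quadtwistlowerbound3}) together with Lemma~\ref{lemma:twistsfactorization}, so invoking the theorem absorbs the difficulty. The one remaining subtlety --- the pathology of $\cE_{3,t}$ at $t = 1/6$, where the fiber is a distinguished $j = 0$ curve not reached by twisting from elsewhere in the family --- is precisely why the $j = 0$ contribution has been split off and will be treated independently.
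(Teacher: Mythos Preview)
Your proposal is correct and follows exactly the approach of the paper: one reduces $\cN(3,X)_{j\ne 0}$ to the count $S(X)$ of Theorem~\ref{theorem:MainTheorem2Twists} for the universal family $\cE_{3,t}$, verifies the numerical hypotheses $(r,s,m,n,h,w)=(1,2,3,2,1,2)$, and reads off $X^{(m+1)/6n}\log X = X^{1/3}\log X$. In fact you have spelled out the hypothesis-checking more explicitly than the paper does (it simply cites Theorem~\ref{theorem:MainTheorem2Twists} and records the parameters in Table~\ref{table:maintheoremproof}).
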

    
    \noindent
    In order to find the asymptotics for $\cN(3,X)_{j=0}$, we observe the following: by Lemma 3.4 in \cite{HS17}, we know that any elliptic curve that has $j$-invariant 0, a rational 3 torsion point, but is not of the form $\cE_{3,t}$ for any $t \in \Q$, admits an equation of the form $y^2 = x^3 + D^2$, $D \in \Z$. Thus the curves missing from our count are those that are quadratic twists of these exceptional curves. That is, they are elliptic curves of the form:
    $$
    y^2 = x^3 + u^3t^2
    $$
    for some $u, t \in \Q$ with $u^3t^2$ integral and minimal. This is the same as counting elliptic curves $y^2 = x^3 + b$, with $b^2 < X$ and $b$ 6th power free. This number is just a constant times $X^{1/2}$.
        
    \begin{remark}
        Note that our result agrees with that in \cite{PPV19}. In fact the argument for $\cN(3,X)_{j=0}$ is exactly the same as in their paper, albeit stated slightly differently. 
    \end{remark}
    
     To complete the proof of the main theorem, for each $N$ we need only calculate $r, s, m$ and $n$ in the notation of Proposition \ref{prop:HarronSnowdenTwistTheorem} and Theorem \ref{theorem:MainTheorem2Twists}. In Table \ref{table:maintheoremproof}, we give the components required to compute $r$ and $s$ in each of the cases of interest (in the notation of the above theorems).

    \begin{table}[h]
    \centering
        \begin{tabular}{|c|c|c|c|c|c|c|}
        \hline
            $N$ & $r$ & $s$ & $m$ & $n$ & Reference &$h_N(X)$  \\ \hline
             3& 1&2 &3 & 2& \ref{theorem:MainTheorem2Twists} & ${X^{1/2}}$ \\ \hline
             4& 2&3 & 1&1 & \ref{prop:HarronSnowdenTwistTheorem} & $X^{1/3}$ \\ \hline
             6& 4& 6& 1&2 & \ref{prop:HarronSnowdenTwistTheorem} & $X^{1/6}\log(X)$ \\ \hline
             8& 4& 6&1 &2 & \ref{prop:HarronSnowdenTwistTheorem} & $X^{1/6}\log(X)$ \\ \hline
             9& 4&6 &1 &2 & \ref{prop:HarronSnowdenTwistTheorem} & $X^{1/6}\log(X)$ \\ \hline
             12& 8& 12& 1& 4& \ref{prop:HarronSnowdenTwistTheorem}& $X^{1/6}$ \\ \hline
             16& 8& 12& 1& 4& \ref{prop:HarronSnowdenTwistTheorem} & $X^{1/6}$ \\ \hline
             18&12 &18 &1 &6 & \ref{prop:HarronSnowdenTwistTheorem} & $X^{1/6}$\\ \hline
        \end{tabular}
        \caption{Values of invariants}
        \label{table:maintheoremproof}
    \end{table}
    
    \begin{remark}
        We now explain the reason for the omission of $\cX_0(7)$ from our asymptotics. Our general strategy of counting points on $\cX_0(7)$ by counting quadratic twists of points on $\cX_{1/2}(7)$ still makes sense. However, the universal family that we obtain for the subscheme of $\cX_{1/2}(7)$ is a little bit worse for counting. More precisely, let $Y$ denote the largest substack of $\cX_{1/2}(7)$ that is isomorphic to a scheme and doesn't contain any cusps. Then there exist $f$ and $g \in \Q[t]$ such that for any $E$ coming from $Y(\Q)$, $E$ is isomorphic to an elliptic curve of the form $y^2=x^3 + f(t)x+g(t)$ for some $t \in \Q$. However, $f$ and $g$ are not coprime. For instance, if $t$ was taken to be the hauptmoduln $ (\eta_1/\eta_7)^4$, then $f$ and $g$ would have a common factor of $t^2 +13t+49$. One might wonder if this might be resolved choosing $f$ and $g$ cleverly, but that is not the case. This is an artifact of $\cX_{1/2}(7)$ having two stacky points, neither of which is rational, which makes it impossible to move the lack of semistability to $\infty \in \P^1$. 
    \end{remark}

\section{Counting points of bounded height on stacks}
\label{section:countingheightsonstacks}
In this section, we prove Theorem \ref{theorem:introduction:maintheorem} for $N=2,3,4,5,6,8,9$ by using results from \cite{ESZB19}. 
As we have seen, one can define \emph{some height} on $\cX_0(N)$, namely the naive height. The question is does this height come from geometry? We know that this is true for modular curves that are schemes (see \S 2.1) -- the naive height is the height with respect to the twelfth power of the Hodge bundle. It follows from the work in \cite{ESZB19} that the same is true for moduli \emph{stacks} of elliptic curves, and we use their machinery to count the number of points of bounded height. Before we proceed, we must set some notation:

\begin{notation}
    Recall that we use $\Ht(E)$ for the naive height of a point $E$ on any modular curve. Let $\cX$ be a stack and $\cV$ a vector bundle on it. We will let $\Ht_{\cV}$ denote the logarithmic height with respect to $\cV$ as defined in \cite{ESZB19} and $\Height_{\cV}$ the multiplicative height corresponding to it. That is to say, $\Height_{\cV} = \exp(\Ht_{\cV})$.
\end{notation}

We will not define $\Ht_{\cV}$ here, but we will use the fact that if $\cV = \lambda^{\otimes 12}$ on $\cX_0(N)$, then for an elliptic curve $E$ corresponding to a rational point $x : \Spec \Q \rightarrow \cX_0(N)$, 
$
\log\Ht(E) = \Ht_{\cV}(x) + O(1)
$
(see Example \ref{example:hodgeheightonX_1} below). Thus our counting function satisfies 
    $$
    \cN(N,X) \asymp \# \{x \in \cX_0(N)(\Q) \mid \Height_{\lambda}^{12}(x) < X \}.
    $$

\subsection{Computing heights on stacks}

Throughout this subsection, $\cX$ will be a proper Artin stack over $\Spec \Z$ with finite diagonal. A $\Q$-rational point $x$ of $\cX$ is a map $x : \Spec \Q \rightarrow \cX$. Let $\cV$ be a vector bundle on $\cX$. Consider for a moment the special case where $\cX = X$, a proper scheme, and $\cV$ is an ample line bundle on it. 
When computing the height of a point on $X$, we use a power of $\cV$ to embed $X \hookrightarrow \P^n$ for some $n$, and then use the naive height of the image of the point on $\P^n$. This makes computations easier. For a stack, the analogue would be mapping it into weighted projective space. In \cite{ESZB19}, the authors show that this works. We recall the specific result below.\\



Consider the special case where $\cV$ is a metrized line bundle $\cL$ (see \cite{ESZB19} for precise definition). Suppose $s_1, s_2, \dotsc, s_k$ are sections of $\cL$. Then, $\cL$ is said to be \emph{generically globally generated by $s_1,\dotsc,s_k$} if the cokernel of the corresponding morphism 
$$
\cO_{\cX}^{\oplus k} \rightarrow \cL
$$
vanishes over the generic point of $\Spec \Z$. In particular, this implies that the cokernel is supported at finitely many places.

\begin{proposition}[\cite{ESZB19}, Proposition 2.27]
\label{prop:ESZBstackyheight}
Let $\cX$ be a stack over $\Spec \Z$, let $\cL$ be a line bundle on $\cX$ such that $\cL^{\otimes n}$ is generically globally generated by sections $s_1, s_2 \cdots s_k$. Let $x : \Spec \Q \rightarrow \cX$ and for each $i$, let $x_i = x^{*}(s_i)$ (after picking an identification of $x^{*}\cL$ with $\Q$). Scale $x_1,\dotsc,x_k$ so that each $x_i\in\Z$ and for every prime $p$, there is some $x_i$ such that $v_p(x_i)<n$. Then
$$
\Ht_{\cL}(x) = \frac{1}{n} \log\max_{i} \{|x_1|, |x_2| \ldots |x_k| \} + O_{\cX(\Q)}(1)
$$
where $|\cdot|$ is the usual archimedean absolute value.
\end{proposition}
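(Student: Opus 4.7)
The plan is to view the sections $s_1,\dotsc,s_k$ as defining a morphism to projective space and then invoke functoriality of heights. Since $\cL^{\otimes n}$ is generically globally generated by these sections, the cokernel of the induced map $\cO_{\cX}^{\oplus k} \to \cL^{\otimes n}$ is supported on a closed substack $Z \subset \cX$ that misses the generic fiber $\cX_{\Q}$. On the open complement $\cU = \cX \setminus Z$ the sections assemble into a morphism $\phi : \cU \to \P^{k-1}_{\Z}$ satisfying $\phi^{*}\cO(1) \cong \cL^{\otimes n}|_{\cU}$. Because $\cU$ contains the generic fiber, every $\Q$-rational point $x : \Spec \Q \to \cX$ factors through $\cU$, and composing gives $\phi(x) \in \P^{k-1}(\Q)$ whose projective coordinates, after choosing an identification $x^{*}\cL \cong \Q$, are exactly the pullbacks $x_i = x^{*}(s_i)$.

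First I would establish functoriality: $\Ht_{\cL^{\otimes n}}(x) = \Ht_{\cO(1)}(\phi(x)) + O(1)$, where the error term absorbs the archimedean metric comparison and the discrepancy between the metric on $\cL^{\otimes n}$ and the pullback metric $\phi^{*}\cO(1)$ at the finitely many finite places lying below $Z$. Second I would apply the standard height formula on $\P^{k-1}(\Q)$, which for a representative $[x_1 : \cdots : x_k]$ with $x_i \in \Z$ and $\gcd(x_1,\dotsc,x_k) = 1$ gives $\log\max_i |x_i|$. The scaling hypothesis --- each $x_i \in \Z$ and for every prime $p$ some $x_i$ with $v_p(x_i) < n$ --- is what produces the correct minimality after remembering that the $x_i$ live in $(x^{*}\cL)^{\otimes n}$: scaling the trivialization $x^{*}\cL \cong \Q$ by $\lambda \in \Q^{\times}$ scales each $x_i$ by $\lambda^{-n}$, so the analogue of ``no common prime factor'' for an $x^{*}\cL$-trivialization is precisely that the $x_i$ not all be divisible by $p^{n}$. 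Combining these with the identity $\Ht_{\cL}(x) = \tfrac{1}{n}\Ht_{\cL^{\otimes n}}(x)$ produces the claimed formula.

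The main obstacle will be the careful bookkeeping of the $O(1)$ at the finitely many bad primes (the image in $\Spec \Z$ of $Z$) and at the archimedean place, where the tuple $(x_i)$ may fail to represent $\phi(x)$ in the scheme-theoretic projective sense and where the local contribution to the stacky height $\Ht_{\cL}$ --- defined via local models of $\cX$ --- must be matched against $\log\max_i |x_i|_p$ by hand. Because $\cX$ is proper with finite diagonal and $Z$ is a fixed closed substack, only finitely many primes contribute and the discrepancies are uniformly bounded by a constant depending only on $\cX$, $\cL$, $n$ and the chosen sections, which is exactly the $O_{\cX(\Q)}(1)$ appearing in the statement. The stacky structure of $\cX$ enters the argument only through the definition of $\Ht_{\cL}$ on local models; once these are compared to the scheme-theoretic height on $\P^{k-1}$ through $\phi$, the remainder of the argument parallels the classical case of a projective scheme.
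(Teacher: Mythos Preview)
The paper does not prove this proposition; it is quoted verbatim from \cite{ESZB19} (their Proposition 2.27) and used as a black box. There is therefore no ``paper's own proof'' to compare your attempt against.

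That said, your sketch is a reasonable outline of how the argument in \cite{ESZB19} is structured: one does use the sections to produce a morphism to projective space on the locus where they globally generate, pull back the standard height on $\P^{k-1}$, and absorb the discrepancy at finitely many bad places into the $O(1)$. The point you flag as the main obstacle --- controlling the local contributions at the bad primes and at infinity, and reconciling the stacky local-model definition of $\Ht_{\cL}$ with the naive projective height --- is indeed where all the content lies, and your sketch does not actually carry this out. In particular, for a genuine stack the extension of a $\Q$-point to an integral point lands in a \emph{tuning stack} rather than in $\cX$ itself, and the local height contributions are defined via degrees of line bundles on that tuning stack; showing these match $\tfrac{1}{n}\log\max_i|x_i|_p$ up to a bounded error requires unwinding that definition, not just invoking functoriality for morphisms of schemes. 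Your final paragraph gestures at this but does not supply the argument. Since the paper simply cites the result, this is not a defect relative to the paper, but you should be aware that the stacky bookkeeping is the substance of the proof, not a routine detail.
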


Note here that we have only stated the version of the proposition that we require, i.e. for $\Spec \Q$ and $\Spec \Z$. A more general version of this proposition holds for other global fields. We will say that the tuple $(x_1, \ldots x_k) \in \Z^k$ is \emph{minimal} if it satisfies the last condition in the theorem: for each prime $p$, there is some $i \in \{ 1 \ldots k\}$ such that $p^{n}\nmid x_i$. 

\begin{example}
\label{example:hodgeheightonX_1}
Let $\cL = \lambda$, the Hodge bundle on $\cX(1)$. Then the global sections of $\lambda^{\otimes 12}$ are weight 12 modular forms, and it is a classical fact that the Eisenstein series $E^3_4,E^2_6$ generically globally generate $\lambda^{\otimes 12}$. An elliptic curve $E:y^2=x^3+Ax+B$ gives a $\Q$-point $x:\Spec\Q\to\cX(1)$. The assumption about scaling the sections corresponds to choosing a minimal Weierstrass equation for $E$. Proposition \ref{prop:ESZBstackyheight} then says that
\[
    \Ht_{\lambda}(x) = \frac{1}{12}\log\max\{|A|^3, |B|^2\} + O(1),
\]
which is, up to the constant $O(1)$, a twelfth of the logarithmic naive height of $E$. Thus, $\Height^{12}_{\lambda}(x)$ is a constant multiple of the naive height $\Ht(E)$ as defined in \S 1.
\end{example}

\subsection{The ring of modular forms of low level}

Since modular forms are sections of powers of the Hodge bundle, we will rely on the structure of the rings of modular forms of $\cX_0(N)$ quite heavily. This subsection summarizes part of the work of Hayato and Tomohiko in \cite{HT11}.

\begin{notation}
    Let $M_k(N)$ denote the space of modular forms for $\Gamma_0(N)$ of weight $k$. We let $M(N) = \bigoplus_k M_k(N)$ be the entire ring of modular forms for $\Gamma_0(N)$.
    \begin{itemize}
        \item $E_k$: classical Eisenstein series of weight $k$, normalized to have constant coefficient equal to 1. Note that $E_k \in M_k(1)$ for $k \ge 4$.
        \item For a modular form $f$ and an integer $h$, let $f^{(h)}(q) = f(q^h)$. 
        \item For any $N \ge 1$, let $C_N = \frac{1}{\gcd(N-1,24)} (NE^{(N)}_2 - E_2) \in M_2(N)$.
        \item For certain $d \in \Z_{>0}$, Hayato and Tomohiko define modular forms $\alpha_d$ and $\beta_d$. We refer the reader to \cite{HT11} for the precise definitions, since we do not use them. The crucial properties of these modular forms that we use are their weight, level and the fact that they have integral coefficients.
    \end{itemize}
\end{notation}

\begin{proposition}[\cite{HT11}, Theorems 1,2]
\label{prop:ringofmodularforms}
    Under the above notation, the rings of modular forms for $\Gamma_0(N)$ for $N \in \{2,3,4,5,6,8,9\}$ are as follows:
    \leavevmode
    \begin{center}
        \begin{tabular}{|c|c|c|}
        \hline
            $N$ & Degrees of generators & $M(N)$ \\
            \hline
            $2$ & $(2,4)$ & $\C[C_2, \alpha_2]$ \\
             \hline
             $3$ & $(2,4,6)$ & $\C[C_3, \alpha_3, \beta_3]/(O_3)$ \\
             \hline
             $4$& $(2,2)$ & $\C[C_2, C_4]$\\
             \hline
             $5$ & $(2,4,4)$ & $\C[C_5, \alpha_5,\beta_5]/(O_5)$
             \\ \hline
             $6$ & $(2,2,2)$ & $\C[C_3^{(2)}, \alpha_6, \beta_6 ]/(O_6)$\\
             \hline
             $8$ & $(2,2,2)$ & $\C[C_4^{(2)}, \alpha_4, \alpha_4^{(2)} ]/(O_8)$ \\
             \hline
             $9$ & $(2,2,2)$ & $\C[C_3, \alpha_9, \beta_9 ]/(O_9)$\\
             \hline
        \end{tabular}
        \captionof{table}{Rings of modular forms of low level}
        \label{table:ringsofmodularforms}
    \end{center}
    
    Here the $O_n$'s are explicit polynomials whose form we will mention later. 
\end{proposition}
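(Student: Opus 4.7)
My plan is to follow the strategy of \cite{HT11}, comparing the ring $M(N)$ against a candidate presentation via a Hilbert series argument. The goal is to exhibit, for each $N$, an explicit surjection from a weighted polynomial ring onto $M(N)$, and then show that its kernel is generated by the single relation $O_N$.

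First, for each $N \in \{2,3,4,5,6,8,9\}$, I would compute the Hilbert series $H_N(t) = \sum_{k \ge 0} \dim_{\C} M_k(\Gamma_0(N)) \cdot t^k$ using the classical dimension formulas for $M_k(\Gamma_0(N))$ in terms of the genus of $\cX_0(N)$, the number of cusps, and the elliptic points. All the $N$ under consideration give genus $0$, so each $H_N(t)$ is an explicit rational function whose shape already matches the weights of the proposed generators and the expected relation.

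Next, I would verify that the listed forms lie in $M(N)$ with the stated weights: $C_N$ is the standard holomorphic combination of $E_2$ and $E_2^{(N)}$, while $\alpha_d$ and $\beta_d$ are built in \cite{HT11} from eta products and theta series, whose weight and level one reads off from the transformation law for $\eta$. Forming the weighted polynomial ring $R_N$ generated by these symbols, I would compute its Hilbert series, take the quotient by the weight-homogeneous ideal $(O_N)$, and check equality with $H_N(t)$; combined with a finite linear-algebra check that the generators span $M_k(\Gamma_0(N))$ for the low weights below the first relation, this forces the induced map $R_N/(O_N) \to M(N)$ to be an isomorphism.

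The main obstacle is producing and verifying the relation $O_N$ itself. Verification is a finite problem: by the Sturm bound, an identity in $M_k(\Gamma_0(N))$ is implied by equality of finitely many Fourier coefficients, so it reduces to a $q$-expansion computation. Producing $O_N$ is harder, since the Hilbert series only predicts the weight in which a relation must first appear; identifying the precise linear combination of monomials in the generators that vanishes requires computing $q$-expansions to sufficient order and then solving a linear system in the space of weight-$k$ modular forms, with some case-by-case bookkeeping of the eta products defining $\alpha_d$ and $\beta_d$.
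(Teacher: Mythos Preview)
The paper does not prove this proposition at all: it is stated as a citation of Theorems~1 and~2 of \cite{HT11}, and the authors simply import the presentations of $M(N)$ together with the explicit relations $O_N$ (which they record later in \S5.3) as black boxes for use in their height computations. So there is no ``paper's own proof'' to compare against.

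That said, your outline is a faithful sketch of the standard method, and in particular of what \cite{HT11} actually does: match the Hilbert series of the candidate graded ring against the known dimension formula $\dim M_k(\Gamma_0(N))$, verify the claimed generators lie in the right weights, and certify the single relation $O_N$ by a Sturm-bound $q$-expansion check. One small refinement worth making explicit: equality of Hilbert series together with surjectivity in low weights does not by itself force the map $R_N/(O_N)\to M(N)$ to be an isomorphism unless you also know $R_N/(O_N)$ is an integral domain (or otherwise control its Hilbert series independently). In practice this is handled either by checking that $O_N$ is irreducible in the weighted polynomial ring, or by the more direct route of showing the generators are algebraically independent modulo $O_N$ via their $q$-expansions. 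With that caveat, your plan is correct and essentially reconstructs the cited argument.
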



    Since for each $N$, $M(N)$ is graded by weight, the degrees in Table \ref{table:ringsofmodularforms} refer to the weights in which the corresponding rings are generated. In what follows, we will use the structure of the ring of modular forms of the levels in Table \ref{table:ringsofmodularforms} to count points of bounded height. The reason we restrict to these cases is that for such $N$, the rings of modular forms are easier to handle. For some other $N$ (e.g. see \S 6), this method reduces to one of counting integral points on more complicated varieties. 

\subsection{Counting results}


Our counting results will be split into three parts: the first, for $N=2,4$ corresponds to the $N$ for which $M(N)$ is freely generated. The second part, is for $N=3,6,8,9$. These are the levels $N$ for which the corresponding $O_N$'s in Table \ref{table:ringsofmodularforms} have a similar form. The last part is for $N=5$, which has to be dealt with separately because $O_5$ has a starkly different form, and thus requires different counting techniques.

\begin{notation}
\label{notation:conditiondagger}
    Let $(a_1, \ldots a_k) \in \Z^k$, $\p = (p_1, \ldots p_k) \in \Z_{>0}^k$ be two tuples of integers. Let $n \in \Z_{>0}$ such that $\lcm(p_1,\ldots p_k) | n$. We will say that the pair, $((a_1, \ldots a_k), \p)$ satisfies condition $(\dagger)$ if for any prime $p$:
    $$
    p^n \nmid \gcd_i(|a_i|^{p_i}).
    $$
    Condition $(\dagger)$ reflects the minimality condition in Proposition \ref{prop:ESZBstackyheight}.
\end{notation}

\noindent
\textbf{The cases $N=2,4:$} From Table \ref{table:ringsofmodularforms}, we see that $M(2) \cong \C[x,y]_{(2,4)}$ and $M(4) \cong \C[x,y]_{(2,2)}$. For $N=2$, $\lambda^{\otimes 12}$ is globally generated by $C_2^6$ and and $\alpha_2^3$. Let $x : \Spec \Q \rightarrow \cX_0(2)$. Let $a = x^{*}(C_2)$ and $b = x^{*}(\alpha_2)$. Taking these to be in minimal form implies that $p^{12} \nmid \gcd(a^6,b^3)$. Then, by Proposition \ref{prop:ESZBstackyheight}, we see that:
$$
\Ht_{\lambda}(x) = \frac{1}{12}\log \max \{|a|^6, |b|^3 \} + O_{\cX_0(2)(\Q)}(1).
$$
Thus we have that:
\begin{align*}
    \cN(2,X) &:= \# \{x \in \cX_0(2)(\Q) \mid \Height_{\lambda}^{12}(x) < X \}\\
    &\asymp \# \{(a,b) \in \Z^2 \mid ((a,b),(6,3)) \text{ satisfies } (\dagger),  \max\{|a|^6,|b|^3\}<X \}. 
\end{align*}

By a similar argument, observing that $C_2^6$ and $C_4^6$ globally generate $\lambda^{12}$ on $\cX_0(4)$, we set $a = x^{*}(C_2)$ and $b = x^{*}(C_4)$. Thus:
$$
\cN(4,X) \asymp \#\{(a,b) \in \Z^2 \mid ((a,b),(6,6)) \text{ satisfies } (\dagger),  \max\{|a|^6,|b|^6\}<X \}.
$$

In each of these cases, our counting problem reduces to counting integers in a box with certain divisibility conditions. The set we need to count has the form $\{(a,b) \in \Z^2 \mid |a|<M, |b|<N, p^{12}\nmid \gcd(a^{p_1}, b^{p_2}) \}$ for some constants $p_1, p_2, M$ and $N$. The set $\{ (a,b) \in \Z^2 \mid |a|<M, |b|<N, \gcd(a,b) =1\} $ is always a subset of this set, and in particular, has size a constant multiple of $MN$. Thus the condition $(\dagger)$ does not affect the asymptotic growth rate.

\begin{proposition}
Maintaining the above notation, we have:
  \begin{align*}
      \cN(2,X) \asymp X^{1/2}\\
      \cN(4,X) \asymp X^{1/3}.
  \end{align*}
\end{proposition}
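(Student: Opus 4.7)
The plan is to verify both asymptotics by reducing, as the excerpt already does, to a straightforward lattice-point count inside a box, and then to observe that the divisibility condition $(\dagger)$ does not affect the order of growth.

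First I would make the box bounds explicit. For $N=2$, the constraint $\max\{|a|^6,|b|^3\}<X$ is equivalent to $|a|<X^{1/6}$ and $|b|<X^{1/3}$, so the total number of lattice points in the box (ignoring $(\dagger)$) is $\asymp X^{1/6}\cdot X^{1/3}=X^{1/2}$. For $N=4$, the constraint $\max\{|a|^6,|b|^6\}<X$ is equivalent to $|a|,|b|<X^{1/6}$, giving $\asymp X^{1/3}$ lattice points in the box. These give the upper bounds $\cN(2,X)\lesim X^{1/2}$ and $\cN(4,X)\lesim X^{1/3}$ immediately, since imposing $(\dagger)$ can only cut down the count.

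For the matching lower bounds, I would restrict to pairs $(a,b)$ with $\gcd(a,b)=1$, which trivially satisfy $(\dagger)$ (no prime divides both $a$ and $b$, let alone to a high power). A standard Möbius inversion / sieve argument — or the well-known fact that the natural density of coprime integer pairs is $6/\pi^2$ — shows that the number of coprime pairs in a box $|a|<M$, $|b|<N$ with $M,N\to\infty$ is $\asymp MN$. Applying this with $(M,N)=(X^{1/6},X^{1/3})$ for $N=2$ and $(M,N)=(X^{1/6},X^{1/6})$ for $N=4$ gives $\cN(2,X)\gtrsim X^{1/2}$ and $\cN(4,X)\gtrsim X^{1/3}$.

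Combining the two directions yields $\cN(2,X)\asymp X^{1/2}$ and $\cN(4,X)\asymp X^{1/3}$. There is no real obstacle here: the only mildly subtle point is confirming that $(\dagger)$ is a sufficiently mild restriction to be absorbed between coprime pairs (lower bound) and the full box (upper bound), which is why the excerpt already remarks that $(\dagger)$ does not affect the asymptotic growth rate.
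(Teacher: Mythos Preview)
Your proposal is correct and follows essentially the same approach as the paper: bound above by the full box count and below by the coprime pairs in the box, noting that $(\dagger)$ is automatically satisfied when $\gcd(a,b)=1$. The paper's argument is exactly this sandwich between coprime pairs and the full box, so there is nothing to add.
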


Note that this agrees with the asymptotics in \cite{HS17}, \cite{PS20} as well as the conclusions of Table \ref{table:maintheoremproof} in \S 4.\\ 

\noindent
\textbf{The cases $N=3,6,8,9$:} These cases are similar because of the similarity in form on the $O_N$'s in table \ref{table:ringsofmodularforms}. More precisely, we have from \cite{HT11}:
\begin{itemize}
    \item $O_3 = \alpha_3^2 - C_3 \beta_3$
    \item $O_6 = \alpha_6^2 - C_3^{(2)}\beta_6 $
    \item $O_8 = \alpha_4^2 - C_4^{(2)}\alpha_4^{(2)}$
    \item $O_9 = \alpha_9^2 - C_3\beta_9$
\end{itemize}

In order to deal with these cases uniformly, we must introduce some notation. For $(a,b,c) \in \Z^3$ and $\mathbf{p} = (p_a, p_b, p_c) \in \Z_{>0}^3$, define:
$$
\Height^{\mathbf{p}}(a,b,c) = \max \{|a|^{p_a}, |b|^{p_b}, |c|^{p_c}\}.
$$

Later, for each $N$, we will fix a choice of $\p$ that makes this height compatible with $\Height_{\lambda}^{12}$ on $\cX_0(N)$. We will be interested in the following counting functions: 
\begin{align*}
    \cN(\p, X) &:=
    \#\{(a,b,c) \in \Z^3  \mid \Height^{\p}(a,b,c) < X, b^2 = ac \},\\
    \cN(\p, X, \dagger) &:= \#\{(a,b,c) \in \Z^3  \mid \Height^{\p}(a,b,c) < X, b^2 = ac,\; \text{and} \; ((a,b,c),\p) \text{ satisfies }(\dagger) \}.
\end{align*}


\begin{lemma}
\label{lemma:stackycount3689}
    There is a positive constant $C$ that depends only on $\p$ and $n$ such that:
    $$\cN(\p, X) = C\, X^{1/p_b} \log(X) + X^{1/p_c} + O(X^{1/p_a}).$$
\end{lemma}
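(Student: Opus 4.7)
The plan is to count integer solutions $(a,b,c)$ to $b^2 = ac$ under the height bound by splitting them into \emph{degenerate} solutions (those with $b = 0$, equivalently $a = 0$ or $c = 0$) and \emph{non-degenerate} solutions. The degenerate count is immediate: the locus $a = b = 0$, $|c|^{p_c} < X$ contributes $2X^{1/p_c} + O(1)$ integer points, and $b = c = 0$, $|a|^{p_a} < X$ contributes $O(X^{1/p_a})$. For non-degenerate solutions, $ac = b^2 > 0$ forces $a$ and $c$ to share a sign; setting $d := \gcd(|a|, |c|)$ and writing $a = \epsilon d u^2$, $c = \epsilon d v^2$ with $u, v \in \Z_{\ge 1}$ and $\gcd(u, v) = 1$, forces $b = \eta d u v$ with $\epsilon, \eta \in \{\pm 1\}$. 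This gives a 4-to-1 correspondence between non-degenerate solutions and valid tuples $(d, u, v)$.

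Setting $T_i := X^{1/p_i}$, the height condition becomes $du^2 \le T_a$, $duv \le T_b$, $dv^2 \le T_c$. The crucial input is that the ring-theoretic identity $b^2 = ac$ among modular forms of weights $w_i = 12/p_i$ is weight-homogeneous, so $2 w_b = w_a + w_c$, equivalently $2/p_b = 1/p_a + 1/p_c$, which gives $T_b^2 = T_a T_c$. Consequently, for fixed $d$, writing $U := \sqrt{T_a/d}$ and $V := \sqrt{T_c/d}$, we get $UV = \sqrt{T_a T_c}/d = T_b/d$, so the hyperbolic constraint $uv \le T_b/d$ is automatic from the box constraints $u \le U$, $v \le V$ and disappears from the count.

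For each fixed $d \in [1, T_a]$, Möbius inversion combined with Davenport's Theorem \ref{theorem:davenport} gives
\[
\#\{(u, v) \in \Z_{\ge 1}^2 : u \le U,\; v \le V,\; \gcd(u, v) = 1\} = \tfrac{6}{\pi^2}\, UV + O(U + V).
\]
Summing over $d$ and multiplying by $4$ for the sign choices, the main term is
\[
\tfrac{24}{\pi^2} \sum_{d \le T_a} \tfrac{T_b}{d} = \tfrac{24}{\pi^2 p_a}\, X^{1/p_b} \log X + O(X^{1/p_b}),
\]
while the Davenport error sums to $\sum_{d \le T_a}(\sqrt{T_a/d} + \sqrt{T_c/d}) = O(T_a + \sqrt{T_a T_c}) = O(X^{1/p_a} + X^{1/p_b})$. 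Combining with the degenerate contribution, and using $X^{1/p_b} \le X^{1/p_c}$ (which follows from $2/p_b = 1/p_a + 1/p_c$ together with $p_a \ge p_c$) to absorb the $O(X^{1/p_b})$ error into the $X^{1/p_c}$ term, yields the stated asymptotic with $C = 24/(\pi^2 p_a)$.

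The main obstacle is really the identification $T_b^2 = T_a T_c$: this is what collapses the hyperbola-inside-a-box region to a plain box, makes the $uv \le T_b/d$ constraint redundant, and causes the harmonic sum $\sum_{d \le T_a} 1/d$ to produce the clean $\log X$ factor. Without this weight-homogeneity input, one would need to subtract off a nontrivial region beyond the hyperbola and would not in general obtain a pure $X^{1/p_b} \log X$ main term. The remaining ingredients (Möbius inversion, lattice point counting, and the elementary parametrization of $b^2 = ac$) are standard.
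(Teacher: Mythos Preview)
Your argument is correct and takes a different route from the paper. The paper fixes $a \neq 0$, silently drops the $c$-constraint, and for each $a$ counts $b$ with $|b| < X^{1/p_b}$ and $a \mid b^2$, asserting this count is $X^{1/p_b}/a + O(1)$ and summing over $a$; the case $a=0$ is handled separately and produces the $X^{1/p_c}$ term. You instead parametrize the non-degenerate solutions of $b^2 = ac$ explicitly as $(\epsilon d u^2,\, \eta d u v,\, \epsilon d v^2)$ with $\gcd(u,v)=1$, and then invoke the weight-homogeneity relation $2/p_b = 1/p_a + 1/p_c$ (equivalently $T_b^2 = T_a T_c$) to show the $b$-constraint is redundant, reducing to a coprime-pair count in a box. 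Your approach makes transparent \emph{why} the three height constraints interact cleanly and yields an explicit constant $C = 24/(\pi^2 p_a)$; the paper's is quicker but leans on a divisor-count identity whose stated form $X^{1/p_b}/a$ is not literally correct for non-squarefree $a$. One minor point on your side: the M\"obius-plus-Davenport error for coprime pairs in $[1,U]\times[1,V]$ is $O\bigl(\max(U,V)\log(2+\min(U,V))\bigr)$ rather than $O(U+V)$, but after summing over $d \le T_a$ the extra logarithm integrates away (since $\int_1^{T_a} x^{-1/2}\log(T_a/x)\,dx = O(\sqrt{T_a})$), so your claimed total error $O(X^{1/p_a} + X^{1/p_b})$ survives.
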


\begin{proof}
    We start by noting that we must have $|a| < X^{\frac{1}{p_a}}$, $|b| < X^{\frac{1}{p_b}}$ and $|c| < X^{\frac{1}{p_c}}$. Now suppose $a \neq 0$. Then:
    \begin{align*}
        \sum_{\substack{|a| < X^{\frac{1}{p_a}}\\ a \neq 0} } \sum_{|c| < X^{\frac{1}{p_c}}} \sum_{\substack{|b| < X^{\frac{1}{p_b}}\\ b^2 = ac}} 1
        &= \sum_{\substack{|a| < X^{\frac{1}{p_a}} \\ a \neq 0}} \sum_{|b| < X^{\frac{1}{p_b}}, a|b^2} 1 \\
        &= \sum_{\substack{|a| < X^{\frac{1}{p_a}} \\ a \neq 0 }} \left( \frac{X^{1/p_b}}{a} + O(1) \right)\\
        &= C. X^{1/p_b} \log(X) + O(X^{1/p_a}).
    \end{align*}
    One might worry here that the `error' term, $X^{1/p_a}$, is actually bigger than the main terms. However, for all of our cases $p_a \ge p_b, p_c$, so $X^{1/p_a}$ will indeed be an error term. If $a = 0$, then $b$ is necessarily $0$ too. Thus we are reduced to counting $\#\{c \in \Z \mid c < X^{1/p_c}\} = X^{1/p_c} + O(1)$.
\end{proof}

\begin{claim}
\label{claim:reducingclaim}
    $\cN(\p,X, \dagger)$ is a positive proportion of
 $\cN(\p, X)$.
\end{claim}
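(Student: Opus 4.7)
The plan is to prove the nontrivial inequality $\cN(\p,X,\dagger) \ge c\,\cN(\p,X)$ for some constant $c>0$ via a union-bound argument over primes witnessing failure of condition $(\dagger)$ (the reverse inequality is immediate).

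First I would observe that in each of the cases $N\in\{3,6,8,9\}$ the relevant integer from Notation \ref{notation:conditiondagger} is $n=12$, since we are using sections of $\lambda^{\otimes 12}$, and that a triple $(a,b,c)$ counted by $\cN(\p,X)$ fails $(\dagger)$ precisely when there exists a prime $p$ with $p^{q_a}\mid a$, $p^{q_b}\mid b$ and $p^{q_c}\mid c$, where $q_i=n/p_i$. A short check gives $2q_b = q_a + q_c$ in every case --- this is nothing but the weight identity $2\operatorname{wt}(b)=\operatorname{wt}(a)+\operatorname{wt}(c)$ that makes the relation $O_N = 0$ weight-homogeneous. This identity implies that the scaling map
\[
 (a,b,c)\ \longmapsto\ \bigl(a/p^{q_a},\,b/p^{q_b},\,c/p^{q_c}\bigr)
\]
preserves the conic $b^2=ac$ and divides $\Height^{\p}$ by exactly $p^{n}$. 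Hence, if $g_p(X)$ denotes the count of triples in $\cN(\p,X)$ for which $p$ witnesses the failure of $(\dagger)$, this scaling map sets up a bijection $g_p(X) \cong \cN(\p, X/p^{n})$.

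The next step is to apply the union bound
\[
 \cN(\p,X) - \cN(\p,X,\dagger)\ \le\ \sum_{p\ \le\ X^{1/n}}\cN(\p, X/p^{n})
\]
and to estimate the right-hand side using Lemma \ref{lemma:stackycount3689}. Writing $\cN(\p,X)\asymp X^{\alpha}(\log X)^{\beta}$, a routine calculation (splitting $\log(X/p^n)=\log X - n\log p$ and noting $\sum_p p^{-2}\log p$ converges) shows that the sum is asymptotic to $\bigl(P(n\alpha)+o(1)\bigr)\,\cN(\p,X)$, where $P(s)=\sum_p p^{-s}$ is the prime zeta function. In our setting $n\alpha=6$ when $N=3$ and $n\alpha=2$ when $N\in\{6,8,9\}$, with $P(6)\approx 0.017$ and $P(2)\approx 0.452$ both strictly less than $1$. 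This yields
\[
 \cN(\p,X,\dagger)\ \ge\ \bigl(1 - P(n\alpha) - o(1)\bigr)\,\cN(\p,X),
\]
which combined with the trivial bound $\cN(\p,X,\dagger)\le\cN(\p,X)$ gives the claim.

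The only mildly delicate step is checking that the logarithmic factor does not disturb the ratio $\sum_p\cN(\p,X/p^n)/\cN(\p,X) \to P(n\alpha)$ when $\beta=1$ (for $N\in\{6,8,9\}$), but this follows from the convergence of $\sum_p p^{-2}\log p$ and the truncation $p\le X^{1/n}$, both routine. The conceptual heart of the proof is really the identity $2q_b = q_a + q_c$: without it, the scaling by a prime would not preserve the conic, and the above bijection between $g_p(X)$ and $\cN(\p, X/p^n)$ would fail.
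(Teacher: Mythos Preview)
Your argument is correct, but it differs substantially from the paper's proof. The paper does not sieve: instead it simply exhibits a subset of the triples satisfying $(\dagger)$ that is already large enough. When $a=0$ (forcing $b=0$), it restricts to $c$ that are $p_c$-th power free, which is a positive density subset of integers up to $X^{1/p_c}$; when $a\neq 0$, it restricts to triples with $a$ squarefree, and observes that since $p_a<n$ this forces $p^{n}\nmid a^{p_a}$ and hence $(\dagger)$ holds automatically. Rerunning the computation in Lemma \ref{lemma:stackycount3689} with $a$ restricted to squarefree values still produces $\asymp X^{1/p_b}\log X$, and that finishes it.

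Your approach, by contrast, bounds the \emph{complement}: you count triples failing $(\dagger)$ at some prime $p$, use the weight identity $2q_b=q_a+q_c$ to get the clean scaling bijection $g_p(X)=\cN(\p,X/p^{n})$, and then sum over primes. This is more work but buys you two things the paper's argument does not: an explicit asymptotic lower constant $1-P(n\alpha)$ for the ratio, and a proof that never needs to reopen the proof of Lemma \ref{lemma:stackycount3689}, only its statement. The paper's proof is shorter and avoids the numerics of $P(2)<1$, but relies on the fact that the lemma's proof still goes through with the extra squarefreeness constraint on $a$; your version treats that lemma as a black box.
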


\begin{proof}
    Firstly, note that if $a=0$, then $\#\{c \in \Z \mid c \text{ is } p_c\text{th power free}, |c|<X^{1/p_c} \}$  is a positive proportion of $\#\{c \in \Z \mid |c|<X^{1/p_c} \}$. In particular these sizes differ by a factor of $\zeta(p_c)$. Now suppose $a\neq 0$. Then the set of triples satisfying $(\dagger)$ contains those for which $a$ is squarefree. The proof of Lemma \ref{lemma:stackycount3689} shows that the set of such triples has size a constant times $X^{1/p_b}\log(X)$ as well. This proves the claim.
\end{proof}

We have therefore proved the following proposition.
\begin{proposition}
  Maintaining the above notation:
  \begin{align*}
      \cN(3,X) &\asymp X^{1/2},\\
      \cN(6,X) &\asymp X^{1/6}\log(X),\\
      \cN(8,X) &\asymp X^{1/6}\log(X),\\
      \cN(9,X) &\asymp X^{1/6}\log(X).
  \end{align*}
\end{proposition}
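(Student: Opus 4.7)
The plan is to combine Proposition \ref{prop:ESZBstackyheight}, the structure results of Proposition \ref{prop:ringofmodularforms}, Lemma \ref{lemma:stackycount3689}, and Claim \ref{claim:reducingclaim} in a uniform way for each of the four levels. For each $N \in \{3,6,8,9\}$, the ring $M(N)$ is generated by three elements which, after relabeling, can be written $a, b, c$ with the single relation $b^2 = ac$ (these are the $O_N$ listed just before the proposition, with $b$ the middle generator in each case). Thus $\cX_0(N)$ embeds into the weighted projective stack $\P(p_a, p_b, p_c)$, cut out by this quadric, with weights chosen so that $a^{p_a}, b^{p_b}, c^{p_c}$ all lie in $M_{12}(N)$. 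Explicitly: for $N = 3$ the generators $C_3, \alpha_3, \beta_3$ have weights $(2, 4, 6)$, so $\p = (6, 3, 2)$; for $N \in \{6, 8, 9\}$ all three generators have weight $2$, so $\p = (6, 6, 6)$.

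First I would check that in each case the three sections $a^{p_a}, b^{p_b}, c^{p_c}$ generically globally generate $\lambda^{\otimes 12}$ on $\cX_0(N)$, which follows because they span $M_{12}(N)$ over $\C$ (by Proposition \ref{prop:ringofmodularforms}) and have no common geometric zero generically, so the cokernel of $\cO_{\cX_0(N)}^{\oplus 3} \to \lambda^{\otimes 12}$ vanishes at the generic point of $\Spec \Z$. Then Proposition \ref{prop:ESZBstackyheight}, applied with $\cL = \lambda$ and $n = 12$, gives for every $x : \Spec \Q \to \cX_0(N)$
$$
\Ht_\lambda(x) = \frac{1}{12} \log\max\{|a|^{p_a}, |b|^{p_b}, |c|^{p_c}\} + O(1),
$$
where $(a, b, c) \in \Z^3$ is a minimal rescaling of $(x^*a, x^*b, x^*c)$ in the sense of Notation \ref{notation:conditiondagger}, and the triple satisfies $b^2 = ac$ from the relation $O_N$. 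Combining this with $\Height_\lambda^{12}(x) \asymp \Ht(E)$ from the start of this section gives $\cN(N, X) \asymp \cN(\p, X, \dagger)$.

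To finish, I would apply Claim \ref{claim:reducingclaim} to conclude $\cN(\p, X, \dagger) \asymp \cN(\p, X)$, and then Lemma \ref{lemma:stackycount3689}, which says
$$
\cN(\p, X) = C\, X^{1/p_b} \log X + X^{1/p_c} + O(X^{1/p_a}).
$$
Plugging in the weights: for $N = 3$ one gets $C X^{1/3} \log X + X^{1/2} + O(X^{1/6})$, whose dominant term is $X^{1/2}$, giving $\cN(3, X) \asymp X^{1/2}$. For $N \in \{6, 8, 9\}$ all exponents collapse to $1/6$ and the logarithmic term dominates, giving $\cN(N, X) \asymp X^{1/6} \log X$.

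The main obstacle I expect is verifying that the map from $\Q$-points of $\cX_0(N)$ to minimal integer triples $(a, b, c)$ on the quadric $b^2 = ac$ is at worst finite-to-one with uniformly bounded fibers, so that the asymptotic count of triples matches the count of $\Q$-points up to a constant. Two subtleties arise: the generic $\mu_2$ inertia of $\cX_0(N)$ must be absorbed into the sign ambiguity of the scaling, and the minimality condition $(\dagger)$ in Notation \ref{notation:conditiondagger} must be checked to coincide with the minimality condition in Proposition \ref{prop:ESZBstackyheight}. Both are essentially bookkeeping --- the first because twisting $(a, b, c)$ by $\mu_2$ returns the same pair $(E, C)$, and the second by direct comparison of definitions --- but they are what legitimize the passage from a stacky height computation on $\cX_0(N)$ to the concrete lattice-point count supplied by Lemma \ref{lemma:stackycount3689}.
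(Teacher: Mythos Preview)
Your proposal is correct and follows essentially the same approach as the paper: identify the weights $\p$ from the ring of modular forms, invoke Proposition \ref{prop:ESZBstackyheight} to get $\cN(N,X) \asymp \cN(\p,X,\dagger)$, then apply Claim \ref{claim:reducingclaim} and Lemma \ref{lemma:stackycount3689}. One small correction: the sections $a^{p_a}, b^{p_b}, c^{p_c}$ do not span $M_{12}(N)$ in general (for $N=6,8,9$ there are many other weight-$12$ monomials), but this is irrelevant --- generic global generation only requires that the three sections have no common zero, which holds because $a,b,c$ generate the graded ring and hence cannot simultaneously vanish at any point of $\cX_0(N)$.
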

\begin{proof}
    Since we only care about the 12th power of the Hodge bundle, we will take $n=12$. From Table \ref{table:ringsofmodularforms}, we observe that for the following choices of $\p$, $\cN(N,X) \asymp \cN(\p,X, \dagger)$: 
    \begin{itemize}
        \item $N=3$: $\p = (6,3,2)$,
        \item $N=6,8,9$: $\p = (6,6,6)$. 
    \end{itemize}
    The proposition now follows from Claim \ref{claim:reducingclaim} and Lemma \ref{lemma:stackycount3689}.
\end{proof}

\noindent
\textbf{The case $N=5$:} Note that this is one of the cases that cannot be tackled by the methods in \S 2 and \S 4. We first give an upper bound for $\cN(5,X)$, and then use a simple sieving argument to refine it into an asymptotic.\\

The ring of modular forms, $M(5)$ is generated by three modular forms, $C_5, \alpha_5$ and $\beta_5$ of weights 2,4 and 4 respectively. The relation between these forms is exactly:
\begin{equation}
\label{equation:X0(5)modularforms}
    O_5 = \alpha_5^2 - \beta_5(C_5^2 + 4\alpha_5 - 8\beta_5).
\end{equation}

Set $n=12$ and $\p = (6,3,3)$. Proceeding analogously as before, we must count integers $(a,b,c)$ with $\Height^{\p}(a,b,c) <X$ such that:
\begin{equation}
\label{equation:countingX0(5)}
    b^2 - a^2c - 4bc + 8c^2 = 0,
\end{equation}
and the pair $((a,b,c), \p)$ satisfies the minimality condition $(\dagger)$. If $\alpha_5 = 0$, then $\beta_5 = 0$, and we get $ \asymp X^{1/6}$ elliptic curves, which is the trivial lower bound. If $C_5 = 0$, we get the two points of $\cX_0(5)$ that have automorphism group $\mu_4$. Each of these is defined over $\Q(i)$ and doesn't contribute to the rational points on $\cX_0(5)$.\\ 

We obtain the upper bound by counting integer triples $(a,b,c)$ without the minimality condition $(\dagger)$. Equation \ref{equation:countingX0(5)} can be rearranged to one of the form:
$$
(4b - 8c)^2 + (8c - a^2)^2 = a^4
$$

For any integer $n$, let $r_2(n)$ denote the number of ways of writing an integer as a sum of two squares. An upper bound can be proved by summing $r_2(a^4)$ over all $a < X^{1/6}$.

\begin{lemma}[\cite{RecNT}, Chapter XV]
 Let $n \in \Z_{>0}$ have factorization
 $$
 n = 2^{a_0} p_1^{e_1} \ldots p_r^{e_r} q_1^{2f_1} q_2^{2f_2} \ldots q_s^{2f_s},
 $$
 where the $p_i$'s are $\equiv 1 \mod 4$ and the $q_i$'s are $\equiv 3 \mod 4$.
 Define $B(n) = \prod_{i=1}^{r}(e_i + 1)$. Then:
 \begin{align*}
     r_2(n) = 4B(n)
 \end{align*}
\end{lemma}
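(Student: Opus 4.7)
The plan is to prove this classical identity via the arithmetic of the Gaussian integers $\Z[i]$. The key observation is that a representation $n = a^2 + b^2$ with $(a,b) \in \Z^2$ corresponds bijectively to a Gaussian integer $a+bi$ of norm $n$, where $N(a+bi) = a^2 + b^2$. So $r_2(n)$ equals the number of elements of $\Z[i]$ of norm $n$.

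Next, I would use the fact that $\Z[i]$ is a PID with unit group $\Z[i]^\times = \{\pm 1, \pm i\}$ of size $4$. Two elements generate the same ideal iff they differ by a unit, so
\[
    r_2(n) = 4 \cdot \#\{\mathfrak{a} \subset \Z[i] \text{ ideal} : N(\mathfrak{a}) = n\}.
\]
Call this ideal count $I(n)$. Since the norm is multiplicative and $\Z[i]$ has unique factorization of ideals, $I(n)$ is a multiplicative function of $n$, so it suffices to compute $I(p^k)$ for each rational prime $p$ and each $k \ge 0$.

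The computation of $I(p^k)$ reduces to the splitting behavior of $p$ in $\Z[i]$, which is classical:
\begin{itemize}
    \item If $p = 2$, then $(2) = (1+i)^2$ is ramified, so the only ideal of norm $2^k$ is $(1+i)^k$, giving $I(2^k) = 1$.
    \item If $p \equiv 1 \pmod 4$, then $(p) = \mathfrak{p}\bar{\mathfrak{p}}$ splits into two distinct prime ideals of norm $p$. The ideals of norm $p^e$ are exactly $\mathfrak{p}^a \bar{\mathfrak{p}}^{e-a}$ for $0 \le a \le e$, giving $I(p^e) = e+1$.
    \item If $q \equiv 3 \pmod 4$, then $(q)$ is inert of norm $q^2$. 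The only ideals of norm $q^{2f}$ are $(q)^f$, so $I(q^{2f}) = 1$, while $I(q^{2f+1}) = 0$.
\end{itemize}

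Multiplying these over the prime factorization of $n = 2^{a_0} \prod p_i^{e_i} \prod q_j^{2f_j}$ yields $I(n) = 1 \cdot \prod (e_i+1) \cdot \prod 1 = B(n)$ (and $I(n) = 0$ if any $q_j$ appears to an odd power, but the hypothesis rules that out). The formula $r_2(n) = 4 B(n)$ then follows. The only substantive ingredient is the classical splitting behavior of primes in $\Z[i]$, which can be read off from the quadratic character of $-1$ modulo $p$; there is no real obstacle here, just a careful multiplicative bookkeeping.
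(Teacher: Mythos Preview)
Your proof via the arithmetic of $\Z[i]$ is correct and is the standard argument for this classical identity. The paper does not actually prove this lemma: it simply cites it as a well-known result (with a remark that the constant $4$ may vary depending on conventions about signs and order, which is immaterial for the asymptotic application). So there is no proof in the paper to compare against; you have supplied exactly the kind of argument one would expect if a proof were required.
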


\begin{remark}
 This is a well known result. Note that the constant in front of $B$ is different depending on whether one takes into account signs and order. But this will not make a difference to our result, since we are only interested in the asymptotic growth rate.
\end{remark}

\noindent
We will now focus on the sum,
$$
\sum_{|a| < X^{1/6}} B^{(4)}(n),
$$
where we define $B^{(4)}(n)$ to be $B(n^4)$, for notational convenience. Note that if $p \equiv 1 \mod 4$, $B^{(4)}(p^k) = 4k+1$. If $p=2$ or $3 \mod 4$, then $B^{(4)}(p^k) = 1$ for any $k$. Thus, $B^{(4)}(n)$ is a multiplicative (although not completely multiplicative) function.


\begin{proposition}
\label{prop:X_0(5)upperbound}
    Maintaining the above notation, there is a constant $c >0$ such that for any $0< \delta <1/6$,
    $$ \sum_{|n| < X^{1/6}} B^{(4)}(n) = c X^{1/6}(\log(X))^2 + O(X^{1/6 - \delta}) .$$
\end{proposition}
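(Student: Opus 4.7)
The plan is to encode the sum as the summatory function of a multiplicative sequence and extract the asymptotic via a Perron-type contour integration. Since $B^{(4)}$ is multiplicative with $B^{(4)}(p^{k})=4k+1$ for $p\equiv 1\pmod 4$ and $B^{(4)}(p^{k})=1$ otherwise, the generating Dirichlet series
\[
F(s)\;=\;\sum_{n\ge 1}\frac{B^{(4)}(n)}{n^{s}}\;=\;\prod_{p\equiv 1(4)}\frac{1+3p^{-s}}{(1-p^{-s})^{2}}\cdot\prod_{p\not\equiv 1(4)}\frac{1}{1-p^{-s}}
\]
converges absolutely for $\operatorname{Re}(s)>1$.

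The central analytic step is the factorization $F(s)=\zeta(s)^{3}L(s,\chi_{4})^{2}H(s)$, where $\chi_{4}$ is the nontrivial Dirichlet character modulo~$4$. One verifies at each prime that the local factor of $F/(\zeta^{3}L^{2})$ is of the shape $1+O(p^{-2s})$: at $p\equiv 1\pmod 4$ it equals $(1+3p^{-s})(1-p^{-s})^{3}$, at $p\equiv 3\pmod 4$ it equals $(1-p^{-2s})^{2}$, and at $p=2$ it equals $(1-2^{-s})^{2}$. Consequently $H(s)$ converges absolutely and defines a holomorphic, nonvanishing function on $\operatorname{Re}(s)>1/2$. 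Since $L(s,\chi_{4})$ is entire and $\zeta(s)$ has a simple pole at $s=1$, it follows that $F(s)$ is meromorphic on $\operatorname{Re}(s)>1/2$ with a single singularity at $s=1$, a pole of order exactly~$3$.

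With this analytic control in place, the asymptotic is delivered by standard Perron/Selberg--Delange machinery. Setting $Y=X^{1/6}$, Perron's formula expresses $\sum_{n\le Y}B^{(4)}(n)$ as a truncated integral of $F(s)Y^{s}/s$ along $\operatorname{Re}(s)=1+\varepsilon$. Shifting the contour past $s=1$ to a line $\operatorname{Re}(s)=\sigma_{0}$ with $1/2<\sigma_{0}<1$ contributes a main term of the form $Y\cdot P_{2}(\log Y)$, where $P_{2}$ is a polynomial of degree~$2$ with positive leading coefficient $H(1)L(1,\chi_{4})^{2}/2=\pi^{2}H(1)/32$. Convexity (Phragm\'en--Lindel\"of) bounds for $\zeta$ and $L(s,\chi_{4})$ on the strip $\sigma_{0}\le\operatorname{Re}(s)\le 1+\varepsilon$ control the shifted integral, and balancing the truncation height against $\sigma_{0}$ yields a power-saving error of order $Y^{1-\delta'}$ for some $\delta'>0$. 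After the substitution $Y=X^{1/6}$, the leading term of $Y\cdot P_{2}(\log Y)$ produces the claimed $cX^{1/6}(\log X)^{2}$ and $\delta'=6\delta$ gives the stated error.

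The principal obstacle is Step~2: the cancellation of the $p^{-s}$ terms in the local factors of $F/(\zeta^{3}L^{2})$ at every odd prime is precisely what extends $H(s)$ past $\operatorname{Re}(s)=1$ and identifies the order-$3$ pole of $F$ at $s=1$ as the only singularity on the line $\operatorname{Re}(s)=1$. Once this factorization is in hand, the rest is a routine complex-analytic argument of the sort used in proving prime-number-theorem-type estimates.
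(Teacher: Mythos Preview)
Your proof is correct and follows essentially the same strategy as the paper: form the Dirichlet series $F(s)=\sum B^{(4)}(n)n^{-s}$, compare its Euler product with powers of $\zeta(s)$ and $L(s,\chi_4)$ to exhibit a pole of order exactly $3$ at $s=1$, and then invoke a Tauberian/Perron argument to extract the asymptotic. Your explicit factorization $F=\zeta^{3}L(\cdot,\chi_4)^{2}H$ with $H$ holomorphic on $\operatorname{Re}(s)>1/2$ is in fact a slightly cleaner packaging than the paper's two-step analysis of $\Psi(s)=F(s)/\zeta(s)$, but the substance is identical.
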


\begin{proof}
    Consider the Dirichlet series: $\ds \sum_{n\ge 1} \frac{B^{(4)}(n)}{n^s}.$ By multiplicativity, this can be written as the Euler product:
    $$
    \prod_{p \equiv 1 \mod 4} \left(\sum_{k \ge 0} (4k+1)p^{-ks} \right) \prod_{p \equiv 3 \mod 4} \left(\sum_{k \ge 0}p^{-ks} \right) \left( \sum_{k \ge 0} 2^{-ks} \right).
    $$
    
    We now simplify this expression.
    \begin{align*}
        \prod_{p \equiv 3 \mod 4} \left(\sum_{k \ge 0}p^{-ks} \right) &=  \prod_{p \equiv 3 \mod 4} \frac{1}{1- p^{-s}}.
    \end{align*}
    
    \begin{align*}
        \prod_{p \equiv 1 \mod 4} \left(\sum_{k \ge 0} (4k+1)p^{-ks} \right) &= \prod_{p \equiv 1 \mod 4} \left( 4 \sum_{k \ge 0}kp^{-ks} + \sum_{k\ge 0}p^{-ks} \right) \\
        &= \prod_{p \equiv 1 \mod 4} \left( \frac{4p^{-s}}{(1-p^{-s})^2} + \frac{1}{1-p^{-s}} \right)\\
        &= \prod_{p \equiv 1 \mod 4} \left( \frac{1 + 3p^{-s}}{(1-p^{-s})^2} \right).
    \end{align*}
    
    Thus,
    \begin{align*}
        \sum_{n\ge 1}\frac{B^{(4)}(n)}{n^s} &= \prod_{p} \left( \frac{1}{1-p^{-s}} \right) \prod_{p \equiv 1 \mod 4}  \left( \frac{1 + 3p^{-s}}{1-p^{-s}} \right)\\
        &= \zeta(s) \prod_{p \equiv 1 \mod 4} \left( \frac{1 + 3p^{-s}}{1-p^{-s}} \right).
    \end{align*}
    Now, let $\chi(p)$ denote the usual Legendre Symbol $\left(\frac{-1}{p} \right)$. Let $K(s) = \frac{1-2^{-s}}{1+3.2^{-s}} $. Then,
    \begin{align*}
        \Psi(s) := \prod_{p \equiv 1 \mod 4} \left( \frac{1 + 3p^{-s}}{1-p^{-s}} \right) &= K(s)\prod_{p}  \left( \frac{1 + 3p^{-s}}{1-p^{-s}} \right)^{\frac{1+ \chi(p)}{2}}\\
        &= K(s)\prod_p \left( 1 + \frac{4p^{-s}}{1-p^{-s}}\right)^{\frac{1+ \chi(p)}{2}}\\
        &= K(s)\prod_p \left(1 + \frac{1}{2}(1+ \chi(p))\frac{4p^{-s}}{1-p^{-s}} + \ldots \right)\\
        &=K(s)  \prod_p \left(1 + 2(1+ \chi(p))p^{-s} + \text{higher powers of } p^{-s} \right).
    \end{align*}

    Consider the Dirichlet $L$-function $L(s, \chi) = \prod_p (1-\chi(p)p^{-s})^{-1}$. Since
    
    \begin{align*}
         \left(1 + 2(1+ \chi(p))p^{-s} + \ldots \right) \left(1 -\chi(p)p^{-s} \right)^2 = 1+2p^{-s} \ldots,
    \end{align*}
   
   we see that $\Psi(s)L(s,\chi)^{-2}$ has a pole of order 2 at $s=1$ and converges for $Re(s)>1$. We know that $L(s,\chi)$ is holomorphic at $s=1$. Thus $\sum_{n \ge 1} \frac{B^{(4)}(n)}{n^s} $ has a pole of order 3 at $s=1$. The proposition now follows from a standard Tauberian theorem (\cite{TschinLoir01}, Appendix A).
\end{proof}

\begin{proposition}
\label{proposition:X_0(5)final}
    For any $X>0$,
     $\cN(5,X) \asymp X^{1/6}\log(X)^2.$
\end{proposition}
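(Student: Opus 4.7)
The upper bound is already essentially in hand: any integer triple $(a,b,c)$ satisfying $b^2 - a^2c - 4bc + 8c^2 = 0$ yields, via $u := 4b - 8c$ and $v := 8c - a^2$, a representation $a^4 = u^2 + v^2$. Dropping the minimality condition $(\dagger)$ only inflates the count, so
\[
    \cN(5,X) \;\ll\; \sum_{|a| < X^{1/6}} r_2(a^4) \;=\; 4 \sum_{|a|<X^{1/6}} B^{(4)}(a) \;\asymp\; X^{1/6}(\log X)^2
\]
by Proposition \ref{prop:X_0(5)upperbound}.

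For the matching lower bound, the plan is to invert this parametrization. Given an odd squarefree integer $a$ with $|a| < X^{1/6}$ and a representation $a^4 = u^2 + v^2$ with $u$ even, I set $c := (a^2 + v)/8$ and $b := (u + a^2 + v)/4$. A short local calculation at the prime $2$ shows that because $a^4 \equiv 1 \pmod{16}$, any representation with $u$ even automatically has $u \equiv 0 \pmod 4$; and among the eight $\Z[i]$-associates of such a representation, exactly two satisfy the congruence $v \equiv -a^2 \pmod 8$ needed for $b, c$ to be integers. For any such associate, $(a,b,c) \in \Z^3$ solves the defining equation; the estimate $|u|,|v| \le a^2 \le X^{1/3}$ forces $|b|,|c| \ll X^{1/3}$, so the height bound $\Height^{\p}(a,b,c) < X$ holds automatically; and squarefreeness of $a$ precludes any prime with $p^2 \mid a$, so $(\dagger)$ is immediate. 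Distinct $(u,v)$ give distinct triples $(a,b,c)$, and each point of $\cX_0(5)(\Q)$ is hit by a bounded number of such triples.

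The problem thus reduces to the analytic estimate
\[
    \sum_{\substack{|a|<N \\ a \text{ odd squarefree}}} B^{(4)}(a) \;\gg\; N(\log N)^2,
\]
which I expect to be the main technical obstacle. The plan is to re-run the Tauberian argument from the proof of Proposition \ref{prop:X_0(5)upperbound} on the restricted Dirichlet series
\[
    D(s) \;:=\; \sum_{a \text{ odd squarefree}} \frac{B^{(4)}(a)}{a^s} \;=\; \prod_{p \equiv 1 \!\!\pmod 4} \left(1 + \frac{5}{p^s}\right) \prod_{p \equiv 3 \!\!\pmod 4} \left(1 + \frac{1}{p^s}\right).
\]
Expanding $\log D(s) = \sum_p p^{-s} + 4 \sum_{p \equiv 1 \!\pmod 4} p^{-s} + O(1)$ and invoking Dirichlet's theorem for primes in arithmetic progressions gives $\log D(s) \sim 3 \log\bigl(1/(s-1)\bigr)$ as $s \to 1^+$, so $D(s)$ retains a pole of order $3$ at $s = 1$. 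The Tauberian theorem of \cite{TschinLoir01} then yields the desired asymptotic, and combining with the upper bound proves $\cN(5,X) \asymp X^{1/6}(\log X)^2$.
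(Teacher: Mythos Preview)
Your argument is correct, and the upper bound is handled identically to the paper. For the lower bound, however, you take a genuinely different route. The paper obtains minimality by a sieve: it counts all triples via Proposition~\ref{prop:X_0(5)upperbound}, observes that the triples non-minimal at a prime $p$ are in bijection with representations of $(a/p^2)^4$ as a sum of two squares, and then argues that the Euler-type product $\prod_{p}\bigl(1 - p^{-2} + O(\log(X)^{-1}p^{-2}\log p)\bigr)$ is bounded between two positive constants, so a positive proportion of the triples are minimal. You instead bypass the sieve entirely by restricting to odd squarefree $a$, for which $(\dagger)$ is automatic, and then re-running the Tauberian argument on the thinned Dirichlet series $D(s)=\prod_{p\equiv 1}(1+5p^{-s})\prod_{p\equiv 3}(1+p^{-s})$; since $D(s)/\bigl(\zeta(s)^3 L(s,\chi)^2\bigr)$ is holomorphic and nonzero for $\Re(s)>1/2$, the order-$3$ pole persists and the same asymptotic follows. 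Your approach is cleaner in that it avoids the somewhat informal passage from ``non-minimal at $p$'' to a product over primes in the paper's proof, at the modest cost of a second Euler-product computation. One small quibble: when you speak of ``the eight $\Z[i]$-associates'', you have already imposed $u$ even, leaving four sign choices $(\pm u,\pm v)$; your conclusion that exactly two of these satisfy $v\equiv -a^2\pmod 8$ is nonetheless correct.
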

\begin{proof}
     The main ingredient here is the upper bound proved in Proposition \ref{prop:X_0(5)upperbound}. To refine this to give an asymptotic growth rate, we must count only the minimal $(a,b,c)$. If a triple is \emph{non-minimal}, then there exists a prime $p$ such that $p^2|a$, $p^4|b$ and $p^4|c$. Let $p$ be such a prime. Then the number of such triples is in bijection with the number of ways of writing $a^4$ as a sum of two squares, say $a^4 = A^2 + B^2$, such that $p^4|A$ and $p^4|B$. This is the same as the number of ways of writing $(a/p^2)^4$ as a sum of two squares. Therefore the number of triples that are non-minimal at $p$ can be calculated by:
$$
\sum_{|n|<X^{1/6}/p^2} B^{(4)}(n).
$$

By Proposition \ref{prop:X_0(5)upperbound}, this has the same asymptotic growth rate as:
\begin{align*}
    c \frac{X^{1/6}}{p^2} \log \left(\frac{X}{p^{12}}\right)^2 
&= (c/p^2)X^{1/6} ( \log(X)^2 - 2\log(X)\log(p^{12}) + \log(p^{12})^2)\\
&= c X^{1/6}\log(X)^2\left(1 - \frac{1}{p^2} - 24\log(X)^{-1}\frac{\log(p)}{p^2} + 144 \log(X)^{-2}\frac{\log(p)^2}{p^2} \right),
\end{align*}
where $c$ is independent of $p$.

This leaves us to examine the product
\[
    \prod_{p^2<X^{1/6}}\left(1-\frac{1}{p^2}-24\log(X)^{-1}\frac{\log(p)}{p^2}+144\log(X)^{-2}\frac{\log(p)^2}{p^2}\right).
\]
This product is bounded both above and below by positive constants. One can see this by noting that each term is bounded below by $1-3/p^2$ and above by $1-1/p^2$. The proposition follows.

\end{proof}

\section{Open questions}

This paper raises multiple questions, some that we believe can be answered by pushing further the methods used here, and some that require different approaches. The first question is about $\cX_0(7)$. We believe that the ideas of \S 2 and \S 3 can be generalized to count points on $\cX_0(7)$, since $\cX_{1/2}(7)$ is a stacky curve with two stacky points. In this case, one must generalize Proposition \ref{prop:HarronSnowdenTwistTheorem} to the case where $f$ and $g$ are not necessarily coprime. The tricky bit here turns out to be the analogue of Lemma \ref{lemma:quadtwistlemma1}. \\ 

One might wonder whether one can count rational points on $\cX_0(7)$ via the framework in \cite{ESZB19}, as we did for some values of $N$ in \S 5. The issue with this is that for each level not listed in Table \ref{table:ringsofmodularforms}, the ring of modular forms is quite complicated. Using relations between the generators of these rings to count points on $\cX_0(N)$ can lead to very hard counting problems. For instance, the problem of counting rational points on $\cX_0(7)$ can be rephrased in terms of counting \emph{integral} points on the intersection of one cubic and two quadric hypersurfaces in $\A^5$. This gets more complicated with higher $N$, at least as far using the description in \cite{HT11} goes. For these higher $N$, if one were to find a smaller set of modular forms that could \emph{both} globally generate $\lambda^{\otimes 12}$ and had simpler relations among them, then one could perhaps count points on the corresponding $\cX_0(N)$ more easily. We do not know at this time if that is indeed possible.\\

There is of course the question of an exact asymptotic as opposed to an asymptotic growth rate. More precisely, one can ask if the limit:
$$
c_N:=\lim_{X \rightarrow \infty } \frac{\cN(N,X)}{h_N(X)}
$$
exists and what its values is. The case $N=2$ is known due to \cite{HS17}, $N=3$ due to \cite{PPV19} and $N=4$ due to \cite{PS20}. It would be interesting to calculate the values for other $N$.\\ 

\textbf{The stacky Batyrev-Manin-Malle conjecture.} For a scheme $X$ and an ample line bundle $L$ on it, the Batyrev-Manin conjecture predicts that there are constants $a(L)$ and $b(L)$ such that the number of rational points of $X$ of height bounded by a number $B$ grows like:
$$
B^{a(L)}\log(B)^{b(L)}.
$$
Here the height refers to the height with respect to the line bundle $L$. The weaker analogue states that the number of rational points should grow like $B^{a(L)+\epsilon}$. In \cite{ESZB19}, the authors make a similar conjecture for stacks, which they call the `Weak stacky Batyrev-Manin-Malle conjecture'. For each of the modular curves considered in this paper, as well as those in \cite{HS17}, the asymptotic growth rate seems to be of the same {form} as predicted, but it would be interesting to verify if the constants match the constants in \cite{ESZB19}. This is work in progress.



\appendix 

\section{Construction of $\cX_{1/2}(N)$} \label{appendix}
In this appendix, we seek to give a construction of the quotient $\cX_{1/2}(N)$ at the cusps.

\subsection{Modular description of cusps}

Let $C_n$ be a N\'eron $n$-gon. Each irreducible component of $C_n$ is isomorphic to $\P^1$. For each $i<n$, the
$i$th component is glued to the $(i+1)$st component by gluing $\infty\in\P^1_{(i)}$ to $0\in\P^1_{(i+1)}$, taking
all subscripts mod $n$. The smooth part of $C_n$, denoted $C_n^{\sm}$, is isomorphic to $\G_m\times\Z/n\Z$. The
group structure on $C_n^{\sm}$ is given by the usual group structure on each component. The automorphism group
of $C_n$ is given by $\mu_n\times\langle\operatorname{inv}\rangle$, where $\zeta\cdot(x,i)=(\zeta^ix,i)$ for
$\zeta$ a primitive $n$th root of unity and $\operatorname{inv}:(x,i)\mapsto (x\inv,-i)$. A \textit{generalized
elliptic curve $E$ over $S$} is a proper, flat, finitely presented map $E\to S$
whose geometric fibers are either smooth genus 1 curves or N\'eron $n$-gons, together with
an $S$-morphism $E^{\sm}\times_S E\to E$ which restricts to a commutative group scheme law on $E^{\sm}$. \\

Let $\barEll{n}$ denote the moduli space of generalized elliptic curves whose degenerate fibers are all
$n$-gons. In general, for any moduli stack $\cX$ of generalized elliptic curves and positive integer
$n$, let $\cX_{(n)}$ denote the substack of $\cX$ parametrizing generalized elliptic curves whose degenerate
fibers are $n$-gons.

\subsection{$\Gamma_1(N)$ and $\Gamma_0(N)$ structures }

Let $N$ be a positive integer, $n\mid N$, and $E$ a generalized elliptic curve over $S$. A \textit{$\Gamma_1(N)$
structure on $E$} is the following data:
\begin{itemize}
    \item A homomorphism $\alpha:\Z/N\Z\to E^{\sm}(S)$ such that $D:=\sum_{a\in\Z/n\Z}[\alpha(a)]$ is an
        effective Cartier divisor on $E$ forming an $S$-subgroup scheme of $E$; and
        
    \item if the fiber over some point in $S$ is an $n$-gon, then the divisor $D$ intersects every
        irreducible component of the fiber. This is equivalent to the ampleness of $D$.
\end{itemize}
The stack $\cX_1(N)$ parametrizes generalized elliptic curves with a $\Gamma_1(N)$ structure. Moreover,
we have $\cX_1(N)=\bigcup_{n\mid N}\cX_1(N)_{(n)}$. \\

Unlike the definition of a $\Gamma_1(N)$ structure, which is a fairly intuitive extension of the definition
of $\cY_1(N)$, the definition of a $\Gamma_0(N)$ structure takes more work. Let's first define a naive
$\Gamma_0(N)$ structure on a generalized elliptic curve $E/S$, and see why this will not work for us.
Our notion of a $\Gamma_0(N)$ structure will be an extension of this definition. \\

A \textit{naive $\Gamma_0(N)$ structure on E/S} is the following data:
\begin{itemize}
    \item A homomorphism $\alpha:\Z/n\Z\to E^{\sm}$ such that $D:=\sum_{a\in\Z/n\Z}[\alpha(a)]$ is an
        ample, effective divisor on $E$; and
    
    \item the image of $\alpha$ is an $S$-subgroup scheme of $E^{\sm}$.
\end{itemize}
One defines $\cX_0(N)^{\naive}$ as the moduli space of generalized elliptic curves with a naive
$\Gamma_0(N)$ structure. Again, we have $\cX_0(N)^{\naive}=\bigcup_{n\mid N}\cX_0(N)^{\naive}_{(n)}$. \\

To see why we do not use this notion, consider the modular curve $\cX_0(p^2)$ for some prime $p$.
Let $E/S$ be a generalized elliptic curve whose degenerate fiber is a $p$-gon, equipped with a naive
$\Gamma_0(p^2)$ structure $G_E$. On the degenerate fiber, the group $G$ generated by $(\zeta_{p^2},1)$
gives a naive $\Gamma_0(p^2)$ structure, andthe pair $(C_p, G)$ has automorphism group
$\mu_p\times\langle\operatorname{inv}\rangle$. On the other hand, the image of $(E,G_E)$ in $\cX_0(1)$
is a generalized elliptic curve whose degenerate fiber has automorphism group $\langle\operatorname{inv}\rangle$.
In particular, the map $\cX_0(N)^{\naive}\to\cX_0(1)$ is not representable (see Lemma 3.2.2 (b) in \cite{Ces17}).
This fails to agree with the construction of $\cX_0(N)$ given in \cite{DR73}, which is what we are
using. \\

The correct definition of a $\Gamma_0(N)$ structure is a bit long winded, so we will not
define it; rather, we will explain how to construct one from a naive $\Gamma_0(N)$ structure.
This is sufficient for our purposes. For a more detailed exposition, we refer the reader to \cite{Ces17}. Let $n\mid N$, $d(n)=n / \gcd(n, N/n)$, and $E/S$ be a
generalized elliptic curve. Let $G$ be a naive $\Gamma_0(N)$ structure on $E$, let $E^{\infty}$ denote
a degenerate fiber of $E$ which is an $n$-gon, and let $G^{\infty}$ denote the fiber of $G$ on
$E^{\infty}$. Consider the torsion subgroup $E^{\infty,\sm}[d(n)]\subset E^{\infty, \sm}$. Define
the contraction of $E$ along $E^{\sm}[d(n)]$ by leaving smooth fibers intact, and on each $E^{\infty}$
in the degenerate $n$-gon locus contracting to a point each component not intersecting $E^{\infty,\sm}[d(n)]$.
Thus, the image of $E^{\infty}$ is a $d(n)$-gon. A new elliptic curve $E'/S$ may be constructed by
gluing together the contractions of $E/S$ for each $n\mid N$ along the non-degenerate locus. The image
of $G$ under these contractions gives a $\Gamma_0(N)$ structure. Note that a $\Gamma_0(N)$ structure remembers
$G$ as well as the images of all degenerate fibers of $G$ under contractions. \\

Let $\cX_0(N)$ be the modular curve parametrizing generalized elliptic curves along with a
$\Gamma_0(N)$ structure. The following lemma shows the relationship between $\cX_0(N)^{\naive}$ and
$\cX_0(N)$.

\begin{lemma}
    There is a commutative diagram
    \[
        \begin{tikzcd}
            \cX_0(N)^{\naive}_{(n)} \ar[d] \ar[r] & \barEll{n} \ar{d} \\
            \cX_0(N)_{(d(n))} \ar{r} & \barEll{d(n)}
        \end{tikzcd}
    \]
    where the vertical maps are contractions.
\end{lemma}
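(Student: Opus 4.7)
The plan is to verify commutativity directly on $S$-valued points, by unpacking each of the four arrows and observing that the left vertical arrow has, built into its definition, precisely the contraction used on the right.

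First I would describe the arrows explicitly. An $S$-point of $\cX_0(N)^{\naive}_{(n)}$ is a pair $(E/S, G)$ where $E$ is a generalized elliptic curve whose degenerate geometric fibers are $n$-gons and $G$ is a naive $\Gamma_0(N)$ structure. The top horizontal arrow forgets $G$, landing in $\barEll{n}(S)$. The right vertical arrow $\barEll{n} \to \barEll{d(n)}$ is the contraction: one contracts $E$ along the subgroup scheme $E^{\sm}[d(n)]$, which leaves smooth fibers untouched and replaces each $n$-gon degenerate fiber by a $d(n)$-gon (by collapsing the components not meeting the $d(n)$-torsion). The left vertical arrow is exactly the assignment $(E,G) \mapsto (E', G')$ constructed in the paragraph preceding the lemma, where $E'$ is the contraction of $E$ along $E^{\sm}[d(n)]$ and $G'$ is the $\Gamma_0(N)$ structure obtained as the image of $G$ under contraction. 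Finally, the bottom horizontal arrow is the forgetful map $\cX_0(N)_{(d(n))} \to \barEll{d(n)}$ dropping the $\Gamma_0(N)$ structure.

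Commutativity then reduces to a tautology: via the top–right route, $(E,G)$ is sent first to $E$ and then to the contraction of $E$ along $E^{\sm}[d(n)]$; via the left–bottom route, $(E,G)$ is sent first to $(E',G')$ and then to $E'$. By definition of the left vertical arrow, $E'$ is the same contraction of $E$. There is nothing further to check at the level of objects, since the passage from $(E,G)$ to $E'$ depends only on $E$ together with the integer $d(n)$, and not on $G$.

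The one substantive point to verify is functoriality in $S$, i.e.\ that everything is compatible with base change. This follows because the contraction construction is defined relatively over any base: the subgroup scheme $E^{\sm}[d(n)] \subset E$ is defined relative to $S$, and contracting the components not meeting it is a relative operation that commutes with arbitrary base change on the degenerate locus (it is smooth, hence trivial, on the smooth locus). I expect this compatibility to be the only technical point, and it is already implicitly built into the definitions of $\cX_0(N)^{\naive}$, $\cX_0(N)$, and $\barEll{m}$ as stacks; see \cite{Ces17} for the precise relative formulation. The lemma then follows.
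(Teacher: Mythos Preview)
The paper states this lemma without proof; it functions as a summary of the construction described in the preceding paragraph and is implicitly attributed to \cite{Ces17}. Your argument is the natural one: once the four arrows are unpacked, commutativity is indeed a tautology, since the left vertical map is defined so that the underlying generalized elliptic curve of $(E',G')$ is exactly the contraction of $E$ along $E^{\sm}[d(n)]$. Your remark that the contraction depends only on $E$ and $d(n)$, not on $G$, is the key observation, and your note about base-change compatibility correctly isolates the only place where something nontrivial could hide. There is nothing to compare against, as the paper gives no proof of its own.
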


\subsection{Construction of $\cX_{1/2}(N)$ at the cusps}

Recall the definition of $\cY_{1/2}(N)$ from Section \ref{section:X_1/2}. We claimed that the construction makes sense
for $\cX_{1/2}(N)$ as well, i.e.\ for generalized elliptic curves, via a similar process. We now outline
a proof of that claim. The process is the same as obtaining a $\Gamma_0(N)$ structure from a naive $\Gamma_0(N)$
structure. For the sake of clarity, let $C_n$ denote the cusp parametrizing generalized elliptic curves whose
degenerate fibers are $n$-gons. We will describe the construction on $C_n$ directly. \\

The fiber of $\cX_1(N)\to \cX_0(N)^{\naive}$ over $C_n$ consists of generators of the $\Gamma_0(N)$
structure. Thus it makes sense to define $\cX_{1/2}(N)^{\naive}$ as the fiberwise quotient of
$\cX_1(N)\to\cX_0(N)^{\naive}$ by an index 2 subgroup of $(\Z/N\Z)^{\times}$. We now declare that the fiber
of $\cX_{1/2}(N)\to\cX_0(N)$ over a cusp consists of the following data:
\begin{itemize}
    \item The fiber above the corresponding point $\cX_{1/2}(N)^{\naive}\to\cX_0(N)^{\naive}$; and
    \item the $\Gamma_0(N)$ structure on the cusp.
\end{itemize}
This second condition rigidifies the structure. \\

\begin{example}
We examine the cusps of $\cX_{1/2}(9)$. There are three possible $n$-gons in the degenerate fibers: $C_1$, $C_3$, and $C_9$.
We look at each case separately.

\begin{enumerate}
\item There is exactly one $\Gamma_0(9)$ structure on $C_1$, namely the subgroup generated by a primitive
    $9$th root of unity $\zeta_9$. The fiber of the map $\Phi_9:\cX_1(N)\to\cX_0(N)$ corresponds to the
    generators of this subgroup, namely $\{\zeta_9^i\mid i=1,2,4,5,7,8\}$. Therefore the two points in
    the fiber of $\cX_{1/2}(9)\to\cX_0(9)$ correspond to the cosets $\{\zeta_9,\zeta_9^4, \zeta_9^7\}$ and
    $\{\zeta_9^2,\zeta_9^5,\zeta_9^8\}$.
        
\item Consider the cusp $C_3$. One naive $\Gamma_0(9)$ structure on $C_3$ is generated by the pair
    $(\zeta_9, 1)$. Further, $d(3)=1$ and so $E^{\sm}[d(3)]=0$. To obtain the corresponding $\Gamma_0(9)$
    structure, one contracts the degenerate fiber to a $C_1$; the image of $\langle(\zeta_9,1)\rangle$ under
    this contraction is the subgroup generated by $(\zeta_9^3,0)$. The data of the $\Gamma_0(9)$ structure
    consists of both the original naive structure and its contraction. \\
    
    To obtain the fiber of $\cX_{1/2}(9)\to \cX_0(9)$, consider the points over $\cX_{1/2}(9)^{\naive}\to\cX_0(9)^{\naive}$.
    The fiber above $\langle(\zeta_9,1)\rangle$ corresponds to the cosets $\{(\zeta_9,1), (\zeta_9^4,1),(\zeta_9^7,1)\}$
    and $\{(\zeta_9^2,1), (\zeta_9^5,1), (\zeta_9^8,1)\}$. As an aside, note that each of these cosets
    has an automorphism group of size 3. We rigidify these points by adding in the data of the $\Gamma_0(9)$
    structure in the above definition.

\item Consider the cusp $C_9$. There is a naive $\Gamma_0(9)$ structure on $C_9$ generated by the element
    $(\zeta_9,1)$. This is also a $\Gamma_0(9)$ structure, since $d(9)=9$. The fiber of
    $\cX_{1/2}(9)^{\naive}\to\cX_0(9)^{\naive}$ thus corresponds to the cosets $\{(\zeta_9,1),(\zeta_9^4,4),(\zeta_9^7,7)\}$
    and $\{(\zeta_9^2,2),(\zeta_9^5,5),(\zeta_9^8,8)\}$.
\end{enumerate}
\end{example}


\printbibliography

\end{document}